\providecommand{\U}[1]{\protect\rule{.1in}{.1in}}
\newtheorem{theorem}{Theorem}
\newtheorem{definition}[theorem]{Definition}
\newtheorem{lemma}[theorem]{Lemma}
\newtheorem{proposition}[theorem]{Proposition}
\newtheorem{remark}[theorem]{Remark}
\newenvironment{proof}[1][Proof]{\noindent\textbf{#1.} }{\ \rule{0.5em}{0.5em}}
\def \RN {\mathbb{R}^N}
\def \N {\mathbb{N}}
\def \R {\mathbb{R}}
\def \e {\varepsilon}
\def \de {\partial}
\def \LL {\mathcal{L}}
\def \bd {\boldsymbol}
\def \G {\mathbb{G}}
\numberwithin{equation}{section}
\numberwithin{theorem}{section}
\begin{document}

\title{KFP operators with coefficients measurable in time and Dini continuous in space}
\author{S. Biagi, M. Bramanti, B. Stroffolini}
\maketitle

\begin{abstract}
We consider degenerate Kolmogorov-Fokker-Planck operators%
\begin{align*}
\mathcal{L}u  &  =\sum_{i,j=1}^{m_{0}}a_{ij}(x,t)\partial_{x_{i}x_{j}}%
^{2}u+\sum_{k,j=1}^{N}b_{jk}x_{k}\partial_{x_{j}}u-\partial_{t}u\\
&  \equiv\sum_{i,j=1}^{m_{0}}a_{ij}(x,t)\partial_{x_{i}x_{j}}^{2}u+Yu
\end{align*}
(with $(x,t)\in\mathbb{R}^{N+1}$ and $1\leq m_{0}\leq N$) such that the
corresponding mo\-del operator having constant $a_{ij}$ is hypoelliptic,
translation invariant w.r.t.\thinspace a Lie group operation in $\mathbb{R}%
^{N+1}$ and $2$-homogeneous w.r.t.\thinspace a family of non\-i\-so\-tro\-pic
dilations. The matrix $(a_{ij})_{i,j=1}^{m_{0}}$ is symmetric and uniformly
positive on $\mathbb{R}^{m_{0}}$. The coefficients $a_{ij}$ are bounded and
\emph{Dini continuous in space}, and only bounded measurable in time. This
means that, setting
\begin{align*}
\mathrm{i)} & \,\,S_{T}=\mathbb{R}^{N}\times\left(  -\infty,T\right) ,\\
\mathrm{ii)} & \,\,\omega_{f,S_{T}}(r) = \sup_{\substack{(x,t),(y,t)\in
S_{T}\\\| x-y\| \leq r}}\vert f(x,t) -f(y,t)\vert\\
\mathrm{iii)} & \,\,\Vert f\Vert_{\mathcal{D}( S_{T}) } =\int_{0}^{1}%
\frac{\omega_{f,S_{T}}(r) }{r}dr+\Vert f\Vert_{L^{\infty}\left(  S_{T}\right)
}%
\end{align*}
we require the finiteness of $\Vert a_{ij}\Vert_{\mathcal{D}(S_{T})}$. We
bound $\omega_{u_{x_{i}x_{j}},S_{T}}$, $\Vert u_{x_{i}x_{j}}\Vert_{L^{\infty}(
S_{T}) }$ ($i,j=1,2,...,m_{0}$), $\omega_{Yu,S_{T}}$, $\Vert Yu\Vert
_{L^{\infty}( S_{T}) }$ in terms of $\omega_{\mathcal{L}u,S_{T}}$,
$\Vert\mathcal{L}u\Vert_{L^{\infty}( S_{T}) }$ and $\Vert u\Vert_{L^{\infty
}\left(  S_{T}\right)  }$, getting a control on the uniform continuity in
space of $u_{x_{i}x_{j}},Yu$ if $\mathcal{L}u$ is bounded and Dini-continuous
in space. Under the additional assumption that both the coefficients $a_{ij}$
and $\mathcal{L}u$ are log-Dini continuous, meaning the finiteness of the
quantity%
\[
\int_{0}^{1}\frac{\omega_{f,S_{T}}\left(  r\right)  }{r}\left\vert \log
r\right\vert dr,
\]
we prove that $u_{x_{i}x_{j}}$ and $Yu$ are Dini continuous; moreover, in this
case, the derivatives $u_{x_{i}x_{j}}$ are locally uniformly continuous in
space \emph{and time}.

\end{abstract}

\section{Introduction and statement of the main result}

In this paper, we will be concerned with \emph{Kol\-mo\-go\-rov-Fokker-Planck}
(KFP, in short) operators of the form
\begin{equation}
\mathcal{L}u=\sum_{i,j=1}^{m_{0}}a_{ij}(x,t)\partial_{x_{i}x_{j}}^{2}%
u+\sum_{k,j=1}^{N}b_{jk}x_{k}\partial_{x_{j}}u-\partial_{t}u,\qquad
(x,t)\in\mathbb{R}^{N+1}, \label{L}%
\end{equation}
where $1\leq m_{0}\leq N$. The first-order part of the operator $\mathcal{L}$,
also called \emph{the drift term}, is a smooth vector field which will be
denoted by $Y$; more explicitly,
\begin{equation}
Yu=\sum_{k,j=1}^{N}b_{jk}x_{k}\partial_{x_{j}}u-\partial_{t}u.
\label{eq:driftY}%
\end{equation}
Points of $\mathbb{R}^{N+1}$ will be sometimes denoted by the compact
notation
\[
\xi=(x,t),\,\eta=(y,s).
\]
Given $T\in\mathbb{R}$, we set
\[
S_{T}:=\mathbb{R}^{N}\times(-\infty,T).
\]
We will make the following assumptions on $\mathcal{L}$:

\begin{itemize}
\item[\textbf{(H1)}] $A_{0}(x,t)=(a_{ij}(x,t))_{i,j=1}^{m_{0}}$ is a
symmetric, uniformly positive matrix on $\mathbb{R}^{m_{0}}$ of bounded
measurable coefficients, defined in $\mathbb{R}^{N+1}$; more precisely, there
exists a constant $\nu>0$ such that
\begin{equation}
\begin{gathered} \nu|v|^{2}\leq\sum_{i,j=1}^{m_0}a_{ij}(x,t)v_{i}v_{j}\leq\nu^{-1}|v|^{2} \\ \text{for every $v\in\R^{m_0},\,x\in\R^N$ and a.e.\,$t\in\R$}. \end{gathered} \label{nu}%
\end{equation}
The co\-ef\-fi\-cients will be also assumed to be Dini continuous
w.r.t.\thinspace$x$, uniformly w.r.t.\thinspace$t$. This assumption will be
specified later (see Definition \ref{Def Dini} and assumption (H3)), since it
requires some preliminaries.

\item[\textbf{(H2)}] The matrix $B=(b_{ij})_{i,j=1}^{N}$ satisfies the
following condition: for $m_{0}$ and suitable positive integers $m_{1}%
,\dots,m_{k}$ such that
\begin{equation}
m_{0}\geq m_{1}\geq\ldots\geq m_{k}\geq1\quad\mathrm{and}\quad m_{0}%
+m_{1}+\ldots+m_{k}=N, \label{m-cond}%
\end{equation}
we have
\begin{equation}
B=%
\begin{pmatrix}
\mathbb{O} & \mathbb{O} & \ldots & \mathbb{O} & \mathbb{O}\\
B_{1} & \mathbb{O} & \ldots & \ldots & \ldots\\
\mathbb{O} & B_{2} & \ldots & \mathbb{O} & \mathbb{O}\\
\vdots & \vdots & \ddots & \vdots & \vdots\\
\mathbb{O} & \mathbb{O} & \ldots & B_{k} & \mathbb{O}%
\end{pmatrix}
\label{B}%
\end{equation}
where $B_{j}$ is an $m_{j}\times m_{j-1}$ matrix of rank $m_{j}$ (for
$j=1,2,\ldots,k$).
\end{itemize}

To the best of our knowledge, the study of the KFP operators has a long
history which dates back to the 1934 paper by Kolmogorov \cite{Kolmo} on the
Theory of Gases. In this paper, Kolmogorov introduced the operator
\[
\mathcal{K}=\Delta_{u}+\langle u,\nabla_{v}\rangle-\partial_{t},\quad
\text{with $u,v\in\mathbb{R}^{n}$ and $t\in\mathbb{R}$},
\]
which can be obtained from \eqref{L} by choosing
\[
N=2n,\quad m_{0}=m_{1}=n,\quad A_{0}=\mathrm{Id}_{n},\quad,\quad B=%
\begin{pmatrix}
\mathbb{O}_{n} & \mathbb{O}_{n}\\
\mathrm{Id}_{n} & \mathbb{O}_{n}%
\end{pmatrix}
.
\]
It should be noticed that, since $m_{0}<N$, the operator $\mathcal{K}$ is
\emph{not parabolic}; however, Kolmogorov proved in \cite{Kolmo} that
$\mathcal{K}$ is $C^{\infty}$-hypoelliptic in $\mathbb{R}^{2n}$ by
constructing an explicit smooth fundamental solution. The (global) $C^{\infty
}$-hypoellipticity of the operator $\mathcal{K}$ is cited by H\"{o}rmander as
one of the main `inspiration' for his celebrated work \cite{Horm} on the
hypoellipticity of the \emph{sums of squares of vector fields} (plus a drift),
of which the KFP operators with constant coefficients $a_{i,j}$'s are a
particular case.

After the seminal paper by H\"{o}rmander, the KFP operators \emph{with
constant coefficients} have been studied by many authors, and from several
point of views; in particular, at the beginning of the '90s' Lanconelli and
Polidoro \cite{LP} started the study of constant coefficients KFP operators
from a \emph{geometrical viewpoint}, showing that these operators possess a
rich underlying \emph{subelliptic geometric structure}. More precisely, they
proved that the $m_{0}+1$ vector fields
\[
X_{1}=\partial_{x_{1}},\ldots,X_{m_{0}}=\partial_{x_{m_{0}}},\,Y=\sum
_{k,j=1}^{N}b_{jk}x_{k}\partial_{x_{j}}-\partial_{t}%
\]
(on which the KFP operators \eqref{L} are modeled) satisfy the following properties:

\begin{itemize}
\item[(a)] $X_{1},\ldots,X_{m_{0}},\,Y$ are left-invariant on the \emph{Lie
group} $\G=(\mathbb{R}^{N+1},\circ)$, where the (non-commutative) composition
law $\circ$ is defined as follows
\begin{align*}
(y,s)\circ(x,t)  &  =(x+E(t)y,t+s)\\
(y,s)^{-1}  &  =(-E(-s)y,-s),
\end{align*}
and $E(t)=\exp(-tB)$ (which is defined for every $t\in\mathbb{R}$ since the
matrix $B$ is nilpotent). For a future reference, we explicitly notice that
\begin{equation}
(y,s)^{-1}\circ(x,t)=(x-E(t-s)y,t-s), \label{eq:convolutionG}%
\end{equation}
and that the Lebesgue measure is the Haar measure, which is also invariant
with respect to the inversion. \vspace*{0.1cm}

\item[(b)] $X_{1},\ldots,X_{m_{0}}$ are homogeneous of degree $1$ and $Y$ is
homogeneous of degree $2$ with respect to a nonisotropic family of
\emph{dilations} in $\mathbb{R}^{N+1}$, which are automorphisms of $\G$ and
are defined by
\begin{equation}
D(\lambda)(x,t)\equiv(D_{0}(\lambda)(x),\lambda^{2}t)=(\lambda^{q_{1}}%
x_{1},\ldots,\lambda^{q_{N}}x_{N},\lambda^{2}t), \label{dilations}%
\end{equation}
where the $N$-tuple $(q_{1},\ldots,q_{N})$ is given by
\[
(q_{1},\ldots,q_{N})=(\underbrace{1,\ldots,1}_{m_{0}},\,\underbrace{3,\ldots
,3}_{m_{1}},\ldots,\underbrace{2k+1,\ldots,2k+1}_{m_{k}}).
\]
The integer%
\begin{equation}
Q=\sum_{i=1}^{N}q_{i}>N \label{eq:defQhomdim}%
\end{equation}
is called the (spatial)\emph{ homogeneous dimension} of $\mathbb{R}^{N}$,
while $Q+2$ is the homogeneous dimension of $\mathbb{R}^{N+1}$. We explicitly
point out that the exponential matrix $E(t)$ satisfies the following
homogeneity property
\begin{equation}
E(\lambda^{2}t)=D_{0}(\lambda)E(t)D_{0}\Big(\frac{1}{\lambda}\Big),
\label{LP 2.20}%
\end{equation}
for every $\lambda>0$ and every $t\in\mathbb{R}$ (see \cite[Rem.\,2.1.]{LP}).

\item[(c)] $X_{1},\ldots,X_{m_{0}},\,Y$ satisfy the \emph{H\"{o}r\-man\-der
Rank Condition} in $\mathbb{R}^{N+1}$.
\end{itemize}

Through the years, many Authors have studied KFP operators with
\emph{variable} coefficients $a_{ij}\left(  x,t\right)  $, modeled on the
above class of left invariant hypoelliptic operators. For instance, Schauder
estimates on bounded domains have been investigated first by Manfredini,
\cite{Ma}, and later by Di Francesco-Polidoro in \cite{DP} under more general
assumptions, assuming the coefficients $a_{ij}$ H\"{o}lder continuous with
respect to the intrinsic distance induced in $\mathbb{R}^{N+1}$ by the vector
fields $\partial_{x_{1}},...\partial_{x_{m_{0}}},Y$. With regards to Schauder
estimates for KFP operators, the reader is referred also to the papers by
Lunardi \cite{Lu}, Priola \cite{Pr}, Imbert-Mouhot \cite{IM}, Wang-Zhang
\cite{WZ}, and the references therein. Also, continuity estimates on
$u_{x_{i}x_{j}}$ under a Dini continuity assumption on $a_{ij}$ and
$\mathcal{L}u$ have been proved by Polidoro, Rebucci, Stroffolini in
\cite{PRS}.

Recent contributions from the field of stochastic differential equations (see
e.g. \cite{PP}) suggest the importance of developing a theory allowing the
coefficients $a_{ij}$ to be rough in $t$ (say, $L^{\infty}$), and uniformly
continuous (for instance, H\"{o}lder continuous) only w.r.t. the space
variables. The Schauder estimates that one can reasonably expect under this
mild assumption consist in controlling the H\"{o}lder seminorms w.r.t. $x$ of
the derivatives involved in the equations, uniformly in time. These estimates
are sometimes called \textquotedblleft partial Schauder
estimates\textquotedblright. Similar results can be expected when H\"{o}lder
continuity is replaced by Dini continuity. Results of this kind (in the
H\"{o}lder case) are well-known for uniformly parabolic operators (see
\cite{Br69}, \cite{K80}, and more recent papers quoted in the references in
\cite{BB}). Also, in the parabolic case, it is known that $u_{x_{i}x_{j}}$
satisfy a continuity estimate \emph{in time}, under the same assumptions of
continuity in space of $a_{ij}$ and $\mathcal{L}u$. Partial Schauder estimates
for $u_{x_{i}x_{j}},Yu$, together with local H\"{o}lder continuity in the
joint variables, have been recently proved by the first two of us in
\cite{BB}. Partial Schauder estimates for degenerate KFP operators have been
proved also in the recent paper \cite{CRHM} by Chaudru de Raynal, Honor\'{e},
Menozzi, with different techniques and without getting the H\"{o}lder control
in time of second order derivatives. We also quote the preprint \cite{LPP}, by
Lucertini, Pagliarani, Pascucci, dealing with the construction of a
fundamental solution for KFP operators with coefficients H\"{o}lder continuous
in space and $L^{\infty}$ in time.

In this paper, we address the problem of proving uniform continuity estimates
w.r.t. the space variables on $u_{x_{i}x_{j}},Yu$, assuming $a_{ij}$ and
$\mathcal{L}u$ to be Dini continuous w.r.t. the space variables, uniformly in
$t$. We prove an estimate of this kind, which, in turn, implies the (partial)
Dini continuity of $u_{x_{i}x_{j}},Yu$ under the stronger assumption that
$a_{ij}$ and $\mathcal{L}u$ are log-Dini continuous w.r.t. the space
variables, uniformly in $t$ (for the precise statement, see Theorem
\ref{Thm main space}). These results are consistent with those proved in
\cite{PRS} when $a_{ij}$ and $\mathcal{L}u$ are Dini-continuous in the joint
variables. Moreover, under the same stronger assumption of log-Dini continuity
of $a_{ij}$ and $\mathcal{L}u$, we prove a bound on the modulus of continuity
in the joint variables for $u_{x_{i}x_{j}}$, analogously to what happens in
the H\"{o}lder case. (For the exact statement, see Theorem \ref{Thm main time}).

\bigskip

\noindent\emph{Statement of the main result.} In order to introduce the
function spaces and the quantities which will be involved in the statements of
our results, we need to introduce some metric notions. First of all, the
system
\[
\mathbf{X}=\{X_{1},\ldots,X_{m_{0}},Y\}
\]
induces in a standard way a (weighted) control distance $d_{\mathbf{X}}$ in
$\mathbb{R}^{N+1}$, which is left invariant w.r.t. group operation
\thinspace$\circ$ and jointly $1$-homogeneous with respect to $D(\lambda)$. As
a consequence, the function $\rho_{\mathbf{X}}(\xi):=d_{\mathbf{X}}(\xi,0)$
satisfies \medskip

(1)\,\,$\rho_{\mathbf{X}}(\xi^{-1}) = \rho_{\mathbf{X}}(\xi)$; \vspace
*{0.05cm}

(2)\,\,$\rho_{\mathbf{X}}(\xi\circ\eta) \leq\rho_{\mathbf{X}}(\xi)
+\rho_{\mathbf{X}}(\eta)$. \medskip

\noindent(For these and related basic notions on H\"{o}rmander vector fields,
we refer to \cite[Chaps. 1-3]{BBbook}). In addition, since $d_{\mathbf{X}}$ is
a distance, we also have \medskip

(1)'\,\,$\rho_{\mathbf{X}}(\xi)\geq0$ and $\rho_{\mathbf{X}}(\xi
)=0\,\Leftrightarrow\,\xi=0$; \vspace*{0.05cm}

(2)'\,\,$\rho_{\mathbf{X}}(D(\lambda)\xi) = \lambda\rho_{\mathbf{X}}(\xi)$,
\medskip

\noindent and this means that $\rho_{\mathbf{X}}$ is a \emph{homogeneous norm}
in $\mathbb{R}^{N+1}$. We then notice that, owing to the explicit expression
of $D(\lambda)$ in \eqref{dilations}, the function
\begin{equation}
\rho(\xi)=\rho(x,t):=\Vert x\Vert+\sqrt{|t|}=\sum_{i=1}^{N}|x_{i}|^{1/q_{i}%
}+\sqrt{|t|} \label{eq:defrhonorm}%
\end{equation}
is also a homogeneous norm in $\mathbb{R}^{N+1}$, and therefore, it is
\emph{globally equivalent} to the norm $\rho_{\mathbf{X}}$. As a consequence,
the map
\begin{equation}
d(\xi,\eta):=\rho(\eta^{-1}\circ\xi)=\Vert x-E(t-s)y\Vert+\sqrt{|t-s|}
\label{d}%
\end{equation}
is a left-invariant, $1$-homogeneous \emph{quasi-distance} on $\mathbb{R}%
^{N+1}$. More precisely, there exists $\bd{\kappa}\geq1$ such that
\begin{align}
d(\xi,\eta)  &  \leq\bd{\kappa}\big(d(\xi,\zeta)+d(\eta,\zeta)\big)\qquad
\forall\,\,\xi,\eta,\zeta\in\mathbb{R}^{N+1};\label{eq:quasitriangled}\\
d(\xi,\eta)  &  \leq\bd{\kappa}\,d(\eta,\xi)\qquad\forall\,\,\xi,\eta
\in\mathbb{R}^{N+1}. \label{eq:quasisymd}%
\end{align}

The quasi-distance $d$ is \emph{globally equivalent} to the control distance
$d_{\mathbf{X}}$; hence, we will systematically use this quasi-distance $d$ in
place of $d_{\mathbf{X}}$. We refer the reader to Section
\ref{sec:preliminaries} for several properties of $d$ which shall be used in
the paper. \vspace*{0.1cm}

Using the quasi-distance $d$, we now introduce the relevant spa\-ces of
functions to which our main result applies.

\begin{definition}
[H\"{o}lder continuous functions]\label{def:Holderspacesd} Let $\Omega
\subseteq\mathbb{R}^{N+1}$ be an open set, and let $\alpha\in(0,1)$. Given a
fun\-ction $f:\Omega\rightarrow\mathbb{R}$, we introduce the notation
\[
|f|_{C^{\alpha}(\Omega)}=\sup\left\{  \frac{|f(\xi)-f(\eta)|}{d(\xi
,\eta)^{\alpha}}:\,\text{$\xi,\eta\in\Omega$ and $\xi\neq\eta$}\right\}  .
\]
Accordingly, we define the space $C^{\alpha}(\Omega)$ as follows:
\[
C^{\alpha}(\Omega):=\{f\in C(\Omega)\cap L^{\infty}(\Omega):\,|f|_{C^{\alpha
}(\Omega)}<\infty\}.
\]
Finally, on this space $C^{\alpha}(\Omega)$ we introduce the norm
\[
\Vert f\Vert_{C^{\alpha}(\Omega)}:=\Vert f\Vert_{L^{\infty}(\Omega
)}+|f|_{C^{\alpha}(\Omega)}.
\]

\end{definition}

\begin{definition}
[Partially Dini and log-Dini continuity]\label{Def Dini} Let $\Omega$ be an
arbitrary open set in $\mathbb{R}^{N+1}$, and let $f\in L^{\infty}(\Omega)$.
For every $r>0$, we set
\[
\omega_{f,\Omega}(r)=\sup_{\begin{subarray}{c}
(x,t),(y,t)\in\Omega \\
\|x-y\| \leq r
\end{subarray}}|f(x,t)-f(y,t)|.
\]
We then say that \medskip

(i)\,\,$f$ is \emph{partially Dini-con\-ti\-nuo\-us in $\Omega$}, and we write
$f\in\mathcal{D}(\Omega)$, if
\begin{equation}
\label{Dini}\int_{0}^{1}\frac{\omega_{f,\Omega}\left(  r\right)  }{r}%
dr<\infty;
\end{equation}

(ii)\,\,$f$ is \emph{partially log-Dini continuous}, and we write
$f\in\mathcal{D}_{\log}(\Omega)$, if
\begin{equation}
\int_{0}^{1}\frac{\omega_{f,\Omega}(r)}{r}|\log r| dr<\infty. \label{log Dini}%
\end{equation}
If $f\in\mathcal{D}(\Omega)$, we define
\[
|f|_{\mathcal{D}(\Omega)} = \int_{0}^{1}\frac{\omega_{f,\Omega}(r)}%
{r}\,dr\quad\text{and}\quad\|f\|_{\mathcal{D}(\Omega)} = \|f\|_{L^{\infty
}(\Omega)}+|f|_{\mathcal{D}(\Omega)}.
\]

\end{definition}

\begin{remark}
\label{rem:funzioniOmega} Let $\Omega\subseteq\mathbb{R}^{N+1}$ be an open
set, and let $f\in\mathcal{D}_{\log}(\Omega)$. We will see in Section
\ref{sec:preliminaries} that the following functions are \emph{well-defined
moduli of continuity} {(}that is, non-decreasing functions on $(0,\infty)$
vanishing for $r\rightarrow0^{+}${)}:
\begin{align}
&  \mathcal{M}_{f,\Omega}(r)=\omega_{f,\Omega}(r)+\int_{0}^{r}\frac
{\omega_{f,\Omega}(s)}{s}\,ds+r\int_{r}^{\infty}\frac{\omega_{f,\Omega}%
(s)}{s^{2}}\,ds;\label{eq:defcontinM}\\
&  {\mathcal{N}}_{f,\Omega}(r)=\mathcal{M}_{f,\Omega}(r)+\int_{0}^{r}%
\frac{\mathcal{M}_{f,\Omega}(s)}{s}\,ds+r\int_{r}^{\infty}\frac{\mathcal{M}%
_{f,\Omega}(s)}{s^{2}}\,ds. \label{eq:defcontinN}%
\end{align}
Furthermore, given any $\mu>0$, we will see that also the functions
\begin{align}
\mathcal{U}_{f,\Omega}^{\mu}(r)  &  =\int_{\RN}e^{-\mu|z|^{2}}\Big(\int%
_{0}^{r\Vert z\Vert}\frac{\omega_{f,\Omega}(s)}{s}%
\,ds\Big)dz\label{eq:defUmuIntro}\\
\mathcal{V}_{f,\Omega}^{\mu}(r)  &  =\int_{\RN}e^{-\mu|z|^{2}}\Big(\int%
_{0}^{r\Vert z\Vert}\frac{\mathcal{M}_{f,\Omega}(s)}{s}\,ds\Big)dz
\label{eq:defVmuIntro}%
\end{align}
are well-defined on the interval $(0,\infty)$. The continuity estimates
appearing in our ma\-in results, namely Theorems \ref{Thm main space}%
-\ref{Thm main time}, will depend on these functions.
\end{remark}

\begin{definition}
\label{def:spacesS} Given any number $T > 0$, we define $\mathcal{S}^{0}%
(S_{T})$ as the space of all fun\-cti\-ons $u:\overline{S}_{T}\rightarrow
\mathbb{R}$ satisfying the following properties: \medskip

(i)\,\,$u\in C(\overline{S_{T}})\cap L^{\infty}(S_{T})$;

(ii)\,\,for every $1\leq i,j\leq m_{0}$, $\partial_{x_{i}}u,\partial
_{x_{i}x_{j}}^{2}u\in L^{\infty}(S_{T})$;

(iii)\,\,$Yu\in L^{\infty}(S_{T})$ \medskip

\noindent{(}in the above (ii)-(iii), the derivatives $\partial_{x_{i}%
}u,\,\partial_{x_{i}x_{j}}^{2}u$ and $Yu$ are intended in the sense of
di\-stri\-bu\-tions{)}. For every fixed $\tau<T$, we also define
\[
S^{0}(\tau,T)=\{f\in\mathcal{S}^{0}(S_{T}):\mathrm{supp}(f)\subset
\mathbb{R}^{N}\times(\tau,T)\}.
\]
Finally, we define $\mathcal{S}^{D}(S_{T})$ as the space of functions
$u\in\mathcal{S}^{0}(S_{T})$ such that
\[
\text{$\partial_{x_{i}}u,\,\partial_{x_{i}x_{j}}^{2}u\in\mathcal{D}(S_{T})$
(for $i,j=1,2,...,m_{0}$)\quad and \quad$Yu\in\mathcal{D}(S_{T})$}.
\]

\end{definition}

\begin{remark}
If $u\in\mathcal{S}^{0}(S_{T})$ then $u$ and $\partial_{x_{1}}u,...,\partial
_{x_{m_{0}}}u$ belong to $C^{\alpha}\left(  S_{T}\right)  $ for every
$\alpha\in\left(  0,1\right)  $. A quantitative estimate on these H\"{o}lder
norms is proved in Theorem \ref{Thm interpolaz} \cite{BB}, under the
assumption of H\"{o}lder continuity (w.r.t. $x$) of $a_{ij}$ and
$\mathcal{L}u$, while in our main result (Theorem \ref{Thm main space}) this
will be proved under the assumption of partial Dini continuity of $a_{ij}$ and
$\mathcal{L}u$.
\end{remark}

We are now ready to state the main results of this paper.

\begin{theorem}
[Global continuity estimates]\label{Thm main space} Let $\mathcal{L}$ be an
operator as in \eqref{L}, and assume that \emph{(H1), (H2)} are satisfied. In
addition, we assume that \medskip

\textbf{\emph{(H3)}} $a_{ij}\in\mathcal{D}(\mathbb{R}^{N+1})$ for every $1\leq
i,j\leq m_{0}$.

\medskip\noindent Then, for every $1\leq i,j\leq m_{0}$, every $T>0$ and
$\alpha\in(0,1)$ there exists a constant $c>0$, depending on $T$, $\alpha$,
the matrix $B$ in \eqref{B}, the number $\nu$ in \eqref{nu} and on the number
\begin{equation}
A=\textstyle\sum_{i,j=1}^{m_{0}} \Vert a_{ij}\Vert_{\mathcal{D}(\mathbb{R}%
^{N+1})} \label{a cost Dini}%
\end{equation}
such that the following estimates hold for every $u\in\mathcal{S}^{D}(S_{T}%
)$:
\begin{align*}
&  \mathrm{(i)}\,\,\sum_{h,k=1}^{m_{0}}\Vert\partial_{x_{h}x_{k}}^{2}%
u\Vert_{L^{\infty}(S_{T})}+\Vert Yu\Vert_{L^{\infty}(S_{T})}+\sum_{i=1}%
^{m_{0}}\Vert\partial_{x_{i}}u\Vert_{C^{\alpha}(S_{T})}+\Vert u\Vert
_{C^{\alpha}(S_{T})}\\[0.1cm]
&  \leq c\big\{\Vert\mathcal{L}u\Vert_{\mathcal{D}(S_{T})}+\Vert
u\Vert_{L^{\infty}(S_{T})}\big\}\\
&  \mathrm{(ii)}\,\,\sum_{h,k=1}^{m_{0}}\omega_{\partial_{x_{h}x_{k}}%
^{2}u,S_{T}}(r)+\omega_{Yu,S_{T}}(r)\\
&  \qquad\qquad\leq c\big\{\mathcal{M}_{\mathcal{L}u,S_{T}}(cr)+(\mathcal{M}%
_{a,S_{T}}(cr)+r^{\alpha})(\Vert\mathcal{L}u\Vert_{\mathcal{D}(S_{T})}+\Vert
u\Vert_{L^{\infty}(S_{T})})\big\}.
\end{align*}
Here, $\mathcal{M}_{a,S_{T}}=\sum_{i,j=1}^{m_{0}}\mathcal{M}_{a_{ij},S_{T}}$
and $\mathcal{M}_{\cdot,\,S_{T}}$ is as in \eqref{eq:defcontinM}.
\vspace*{0.1cm}

In particular, the functions $\partial_{x_{h}x_{k}}^{2}u, (h,k=1,..., m_{0}) ,
Yu$ are partially Dini continuous if both the coefficients $a_{ij}$ and the
function $\mathcal{L}u$ are partially log-Dini continuous.
\end{theorem}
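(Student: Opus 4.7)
The plan follows the classical freezing-plus-singular-integral route, where freezing is performed only in the space variable, since the coefficients are merely $L^\infty$ in time. For each $x_0\in\mathbb{R}^N$ introduce the auxiliary operator
\[
\mathcal{L}_{x_0}v=\sum_{i,j=1}^{q}a_{ij}(x_0,t)\,\partial^{2}_{x_{i}x_{j}}v+Yv,
\]
whose second-order coefficients are constant in $x$ and bounded-measurable in $t$. A fundamental solution $\Gamma_{x_0}(\xi;\eta)$ for $\mathcal{L}_{x_0}$ can be constructed as in \cite{LPP} and enjoys Gaussian-type bounds together with its spatial derivatives up to order three, homogeneous with respect to the group $\mathbb{G}$. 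For $u\in\mathcal{S}^D(S_T)$ (after reducing via a time cutoff to the class $S^0(\tau,T)$) one writes the potential representation
\[
u(\xi)=\int\Gamma_{x_0}(\xi;\eta)\bigl(\mathcal{L}u(\eta)+R_{x_0}u(\eta)\bigr)\,d\eta,\qquad R_{x_0}u(\eta)=\sum_{i,j=1}^{q}\bigl(a_{ij}(x_0,s)-a_{ij}(y,s)\bigr)\partial^{2}_{y_i y_j}u(\eta),
\]
and then differentiates under the integral sign, with the canonical choice $x_0=x$ when evaluating at $\xi=(x,t)$.

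\textbf{Proof of (i).} The resulting singular integral for $\partial^2_{x_h x_k}u(\xi)$ is split over the regions $\{d(\xi,\eta)\leq 1\}$ and $\{d(\xi,\eta)>1\}$. On the local part one subtracts $\mathcal{L}u(\xi)$ and exploits the cancellation $\int_{B_r}\partial^{2}_{x_h x_k}\Gamma_{x_0}\,d\eta=0$ to get a bound in terms of $|\mathcal{L}u|_{\mathcal{D}(S_T)}$; on the far part the kernel decays so as to produce $\|\mathcal{L}u\|_{L^\infty}+\|u\|_{L^\infty}$; and the commutator $R_{x_0}u$ contributes $A\cdot\sum\|\partial^{2}_{x_i x_j}u\|_{L^\infty}$ multiplied by a factor that can be made small by a standard cutoff argument and then absorbed into the left-hand side. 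This yields the $L^\infty$ bound on $\partial^2_{x_h x_k}u$; the bound on $\|Yu\|_{L^\infty}$ follows from the identity $Yu=\mathcal{L}u-\sum a_{ij}\partial^{2}_{x_i x_j}u$, and the $C^\alpha$ bounds on $u$ and $\partial_{x_i}u$ follow from Theorem \ref{Thm interpolaz}.

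\textbf{Proof of (ii) and origin of $\mathcal{M}$.} To bound $\omega_{\partial^{2}_{x_h x_k}u,S_T}(r)$, pick $\xi_1=(x_1,t),\,\xi_2=(x_2,t)$ with $\|x_1-x_2\|\leq r$, apply the representation formula with $x_0=x_1$, and split the integration into a near region $\{d(\xi_1,\eta)\leq\bd{\kappa}\,r\}$ and its complement. On the near region one subtracts $\mathcal{L}u(\xi_1)$, producing a Dini-type integral in $\omega_{\mathcal{L}u,S_T}$, while $R_{x_1}u$ is treated via $\omega_{a,S_T}\cdot\sum\|\partial^{2}_{x_i x_j}u\|_{L^\infty}$. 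On the far region one uses the mean-value bound
\[
\bigl|\partial^{2}_{x_h x_k}\Gamma_{x_1}(\xi_1;\eta)-\partial^{2}_{x_h x_k}\Gamma_{x_1}(\xi_2;\eta)\bigr|\leq C\,r\,\bigl|\nabla_{x}\partial^{2}_{x_h x_k}\Gamma_{x_1}(\xi^{*};\eta)\bigr|,
\]
which gains a factor $r$ at the price of one further spatial derivative on $\Gamma$. The three emergent contributions correspond exactly to the three summands in \eqref{eq:defcontinM}: the pointwise value $\omega(r)$, the short-range Dini integral $\int_0^r\omega(s)/s\,ds$, and the long-range tail $r\int_r^\infty\omega(s)/s^2\,ds$, applied both to $\omega_{\mathcal{L}u,S_T}$ and to $\omega_{a,S_T}$. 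The factor $r^\alpha$ in (ii) appears when the $C^\alpha$ bounds of Step (i) are invoked in the far-field estimates where only $L^\infty$-type control of $u$ (or of $\mathcal{L}u$) would otherwise be available. The bound on $\omega_{Yu,S_T}$ then follows again from $Yu=\mathcal{L}u-\sum a_{ij}\partial^{2}_{x_i x_j}u$, using $\omega_{a_{ij}\partial^{2}_{x_i x_j}u}\leq\omega_{a,S_T}\|\partial^{2}_{x_i x_j}u\|_{L^\infty}+\|a\|_{L^\infty}\omega_{\partial^{2}_{x_i x_j}u,S_T}$.

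\textbf{From log-Dini to Dini, and main obstacle.} If $a_{ij},\mathcal{L}u\in\mathcal{D}_{\log}(S_T)$, integrating (ii) against $dr/r$ on $(0,1)$ and using Fubini give
\[
\int_{0}^{1}\frac{1}{r}\int_{0}^{r}\frac{\omega_{f,S_T}(s)}{s}\,ds\,dr=\int_{0}^{1}\frac{\omega_{f,S_T}(s)}{s}\log\frac{1}{s}\,ds<\infty,
\]
and a symmetric computation handles the tail $r\int_r^\infty\omega(s)/s^2\,ds$; the term $r^{\alpha-1}$ is trivially integrable. Hence $\omega_{\partial^{2}_{x_h x_k}u,S_T}$ and $\omega_{Yu,S_T}$ satisfy \eqref{Dini}, proving the final assertion. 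The hardest step will be to prove the pointwise Gaussian bounds and the derivative estimates up to order three for $\Gamma_{x_0}$ uniformly in $x_0$, for a fully degenerate KFP operator whose second-order coefficients are only $L^\infty$ in time; once those kernel estimates are in place, the rest is bookkeeping of Dini-type integrals over the group $\mathbb{G}$, driven by the non-isotropic homogeneity \eqref{LP 2.20} and the left invariance of $d$.
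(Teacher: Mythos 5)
Your overall strategy matches the paper's: freeze the coefficients in $x$ only, use the fundamental solution of the frozen operator (which has coefficients depending only on $t$), write a singular-integral representation for $\partial^2_{x_hx_k}u$, absorb the commutator, and get the lower-order norms by interpolation. The identification of the three pieces of $\mathcal{M}$ with the cancellation term, the near-range Dini integral, and the far-range tail is also essentially right. However, several load-bearing steps are missing or misattributed.

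First, the absorption of the commutator $R_{x_0}u$ is not obtainable directly by ``a standard cutoff argument'': in the paper this is a genuine two-stage procedure. Theorem~\ref{Thm local Dini x} first proves the estimate for functions with $\mathrm{supp}(u)\subseteq B_r(\overline{\xi})\cap\overline{S_T}$ with $r\leq r_0$ small; the smallness of the commutator contribution is driven by the fact that $\omega_{a_{hk},S_T}$ restricted to a small ball produces a small factor $\mathcal{U}^\mu_{a,S_T}(\sqrt{2r_0})$, using Lemma~\ref{lem:omegasuppcpt} to control $\omega_{f_{hk}}$ when the support is small. The global estimate then requires covering $S_T$ by balls $\{B_r(\overline{\xi}_n)\}$ (Proposition~\ref{Prop covering}), applying the local estimate to each $u\phi_n$, and controlling the extra terms $\mathcal{L}\phi_n\cdot u$ and $a_{hk}\partial_{x_h}u\,\partial_{x_k}\phi_n$ generated by the cutoff. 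It is exactly here that the interpolation inequality (Theorem~\ref{Thm interpolaz}) enters, and this is also where the factor $r^\alpha$ in (ii) comes from: it is the Hölder modulus of the smooth cutoffs $\phi_n$ and of the interpolated norms $\|u\|_{C^\alpha}$, $\|\partial_{x_h}u\|_{C^\alpha}$ on local balls, not from ``far-field estimates where only $L^\infty$-type control is available.'' Your sketch never sets up this covering-plus-interpolation bookkeeping, which is the main technical work in Section~\ref{sec:proofMainThm}.

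Second, the fundamental solution of the frozen operator does not need to be constructed as in \cite{LPP}: since $\mathcal{L}_{\overline{x}}$ has coefficients depending only on $t$, it is the explicit Gaussian from \cite{BP}, with sharp estimates (including the mean-value inequality \eqref{eq:meanvalueGamma}, which encodes the third-order control you want) proved in \cite{BB} and uniform in the freezing point $\overline{x}$ since they depend only on $\nu$. Also, the Korn-type cancellation in the representation formula \eqref{repr formula u_xx} subtracts $\mathcal{L}u(E(s-t)x,s)$ along the drift trajectory, not $\mathcal{L}u(\xi)$; this is what makes the integral absolutely convergent under merely partial Dini continuity, and the resulting $L^\infty$ estimate comes out as $\mathcal{U}^\mu_{\mathcal{L}u,S_T}(\cdot)$ (the Gaussian-weighted Dini integral from Proposition~\ref{prop:Analoga313}), a quantity you do not introduce but which is indispensable for making the absorption quantitative.
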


It is worthwhile noting that the full H\"{o}lder norms of the lower order
terms $u,\partial_{x_{i}}u$ $\left(  i=1,2,...,m_{0}\right)  $ can be bounded
assuming the partial Dini continuity of $\mathcal{L}u$ and the coefficients
$a_{ij}$. In particular, any function in $\mathcal{S}^{D}(S_{T})$ has this
regularity property.

\begin{theorem}
[Continuity estimates in space-time for $\de^{2}_{x_{i}x_{j}}u$]%
\label{Thm main time} Let $\mathcal{L}$ be an operator as in \eqref{L}, and
assume that \emph{(H1), (H2)} are satisfied. In addition, we assume that
\medskip

\textbf{\emph{(H3)'}} $a_{ij}\in\mathcal{D}_{\mathrm{log}}(\mathbb{R}^{N+1})$
for every $1\leq i,j\leq m_{0}$.

\medskip\noindent Then, for every $1\leq i,j\leq m_{0}$, every $-\infty
<\tau<T$, every $\alpha\in(0,1)$ and every \emph{compact set} $K\subseteq
\mathbb{R}^{N}$ there exists a constant ${c>0}$, depending on $K,\tau
,T,\alpha$ and $\nu$, such that
\begin{equation}%
\begin{split}
&  |\partial_{x_{i}x_{j}}^{2}u(x_{1},t_{1})-\partial_{x_{i}x_{j}}^{2}%
u(x_{2},t_{2})|\\
&  \quad\leq c\left\{  \mathcal{N}_{\mathcal{L}u,S_{T}}(cr)+\mathcal{V}%
_{\mathcal{L}u,S_{T}}^{\mu}\left(  c\sqrt{|t_{1}-t_{2}|}\right)  \right. \\
&  \quad\quad\left.  +\left(  \mathcal{N}_{a,S_{T}}(cr)+\mathcal{V}_{a,S_{T}%
}^{\mu}\left(  c\sqrt{|t_{1}-t_{2}|}\right)  +r^{\alpha}\right)  \left(
\Vert\mathcal{L}u\Vert_{\mathcal{D}(S_{T})}+\Vert u\Vert_{L^{\infty}(S_{T}%
)}\right)  \right\}
\end{split}
\label{eq:maincontinuitydexixjtime}%
\end{equation}
for any $u\in\mathcal{S}^{D}(S_{T})$ with $\mathcal{L}u\in\mathcal{D}%
_{\mathrm{log}}(S_{T})$ and any $(x_{1},t_{1}),\,(x_{2},t_{2})\in
K\times\lbrack\tau,T]$. In the above estimate, we have used the notation
\[
r=d(x_{1},t_{1}),(x_{2},t_{2}))+|t_{1}-t_{2}|^{1/q_{N}}%
\]
where $q_{N}\geq3$ is the largest exponent in the dilations $D(\lambda)$, see
\eqref{dilations}; in addition,
\[
\textstyle\mathcal{N}_{a,S_{T}}=\sum_{i,j=1}^{m_{0}}\mathcal{N}_{a_{ij},S_{T}%
},\qquad\mathcal{V}_{a,S_{T}}^{\mu}=\sum_{i,j=1}^{m_{0}}\mathcal{V}%
_{a_{ij},S_{T}}^{\mu},
\]
and $\mathcal{N}_{\cdot,\,S_{T}},\,\mathcal{V}_{\cdot,\,S_{T}}^{\mu}$ are as
in \eqref{eq:defcontinN}-\eqref{eq:defVmuIntro}, respectively \emph{(}and
$\mu>0$ is a constant only depending on $\nu$\emph{)}.
\end{theorem}

More explicit bounds on the functions $\mathcal{U}_{...}^{\mu}\left(
r\right)  ,\mathcal{V}_{....}^{\mu}\left(  r\right)  $ appearing in
(\ref{eq:maincontinuitydexixjtime}) will be given in Proposition
\ref{prop:explicitboundUVf}.

\begin{remark}
[Dependence of the constants]Throughout the paper, we will call `structural
constant' any constant $c>0$ only depending on the matrix $B$ and the
\`ellipticity constant' $\nu$. Notice that the matrix $B$ encodes in
particular the numbers $N$, $Q$, $q_{i}$, $m_{i}$, $\boldsymbol{\kappa}$, and
the functions $d_{\boldsymbol{X}}$, $\rho_{\boldsymbol{X}}$, $d$, $\rho$. Any
other dependence will be specified.
\end{remark}

\bigskip

\noindent\emph{Structure of the paper.} Let us now briefly explain the
strategy we follow to prove our a priori estimates. As in the classical
Schauder theory, the operator with variable coefficients is seen as a small
local perturbation of the constant one obtained by freezing the coefficients
$a_{ij}$ at some point $\left(  \overline{x},\overline{t}\right)  $. In our
context, since the coefficients are not continuous in $t$, we can only see our
operator as a small local perturbation of the operator with coefficients only
depending on $t$, obtained by freezing the $a_{ij}\left(  \cdot,t\right)  $ at
some point $\overline{x}$. Therefore, our model operator is the one with
bounded measurable coefficients $a_{ij}\left(  t\right)  $:
\[
\mathcal{L}u=\sum_{i,j=1}^{m_{0}}a_{ij}(t)\partial_{x_{i}x_{j}}^{2}%
u+\sum_{k,j=1}^{N}b_{jk}x_{k}\partial_{x_{j}}u-\partial_{t}u.
\]
So, the startig point of our strategy is a careful study of the operator
$\mathcal{L}$ with bounded measurable coefficients $a_{ij}(t)$. For this
operator, an explicit fundamental solution has been computed and studied by
Bramanti and Polidoro in \cite{BP}; more properties and sharp estimates for
this fundamental solution have been established in \cite{BB}. In Section 2,
after recalling some known facts about the metrics (\S 2.1) and establishing
some preliminary results on the Dini-type function spaces (\S 2.2), in \S 2.3
and 2.4 we recall some results proved in \cite{BB} and \cite{BP} about \ the
fundamental solution of the model operator with coefficients $a_{ij}(t)$ and
some interpolation inequalities for H\"{o}lder norms.

In Section 3 we keep studying the model operator with coefficients only
depending on $t$. We first establish representation formulas for $u$ and
$u_{x_{i}x_{j}}$ in terms of $\mathcal{L}u$, exploiting this fundamental
solution, under the partial Dini-continuity assumption on $\mathcal{L}u$.
Then, by singular integral techniques, we prove the desired a priori estimates
for this model operator (see Theorems \ref{Thm Dini coeff t}%
-\ref{Thm local continuity time}). In Section \ref{sec:proofMainThm} we then
study the operator with coefficients $a_{ij}\left(  x,t\right)  $. Here we
apply the classical \textquotedblleft Korn's trick\textquotedblright\ of
freezing the coefficients of $\mathcal{L}$, in our case only w.r.t. $x$,
writing representation formulas for $u_{x_{i}x_{j}}$ and then regard the
original operator as a small local perturbation of the frozen one. This allows
us to prove the desired a priori estimates for functions with small compact
support (Section 4.1). Removing this restriction requires the use of cutoff
functions and interpolation inequalities for the derivatives of intermediate
order; this is accomplished in Section 4.2, completing the proof of our first
main result, Theorem \ref{Thm main space}. Finally, in Section 5 we prove our
second main result, Theorem \ref{Thm main time}, that is the bound of the
modulus of continuity of $u_{x_{i}x_{j}}$ in the joint variables $\left(
x,t\right)  $.

\bigskip

\noindent\emph{Acknowledgements.} We wish to thank the anonymous Referees who
carefully read our paper and pointed out several minor mistakes or
imprecisions. They have significantly helped us to improve the final form of
our paper.

\section{Preliminaries}

\label{sec:preliminaries} We collect in this section several preliminary
results which will be used in the rest of the paper. For basic facts and more
details about H\"{o}rmander vector fields, the metric they induce, and
homogeneous groups, we refer to \cite{BBbook}.

\subsection{Some metric properties}

As already discussed in the Introduction, Lanconelli and Polidoro \cite{LP}
proved that there is an `intrinsic subelliptic geometry' associated with any
KFP o\-pe\-ra\-tor. More precisely, if $\mathcal{L}$ is as in \eqref{L} and
if
\[
\mathbf{X}=\{\partial_{x_{1}},\ldots,\partial_{x_{m_{0}}},Y\},
\]
assumption (H2) ensures that the weighted distance $\rho_{\mathbf{X}}$ induced
by $\mathbf{X}$ is well-de\-fi\-ned, left-invariant w.r.t. group operation
\thinspace$\circ$ and $D(\lambda)$-homogeneous of degree $1$. Even if it
se\-ems natural to investigate the regularity properties of $\mathcal{L}$
using this distance, the lack of an explicit expression makes better suited
the \emph{quasi-distance}
\begin{equation}
d((x,t),(y,s))=\rho((y,s)^{-1}\circ(x,t))=\Vert x-E(t-s)y\Vert+\sqrt{|t-s|},
\label{eq:explicitd}%
\end{equation}
which is \emph{globally equivalent to $\rho_{\mathbf{X}}$ and has an explicit
form.} We now list here below some simple properties of $d$ which shall be
used in the sequel. \vspace*{0.1cm}

We begin by observing that, since $E(0)=\mathbb{I}$, from \eqref{eq:explicitd}
we infer that
\begin{equation}
d((x,t),(y,t))=\left\Vert x-y\right\Vert =\sum_{i=1}^{N}|x_{i}-y_{i}%
|^{1/q_{i}}\quad\forall\,\,x,y\in\RN,\,t\in\mathbb{R}. \label{d stesso t}%
\end{equation}
As a consequence, we derive that $d$ \emph{is sym\-me\-tric and independent of
$t$ when applied to points of $\mathbb{R}^{N+1}$ with the same $t$%
-coordinate}. Unfortunately, an a\-na\-lo\-gous property for points with the
same $x$-coordinate \emph{does not hold}. In fact, for every fixed
$x\in\mathbb{R}^{N}$ and every $t,s\in\mathbb{R}$, again by
\eqref{eq:explicitd}, we have
\[
d((x,t),(x,s))=\Vert x-E(t-s)x\Vert+\sqrt{|t-s|}.
\]
Now, since the geometry of a metric space is encoded in the `shape' of the
balls, in our context we are led to consider the \emph{$d$-balls} associated
with $d$. Recalling that $d$ is a \emph{quasi-distance} (in particular, $d$ is
not symmetric), we fix once and for all the following definition: given any
$\xi\in\mathbb{R}^{N+1}$ and any $r>0$, we define
\[
B_{r}(\xi):=\{\eta\in\mathbb{R}^{N+1}:\,d(\eta,\xi)<r\}.
\]
Using the translation-invariance and the homogeneity of $d$, it is not
difficult to recognize that the following properties are satisfied:
\begin{align}
\mathrm{(i)}  &  \,\,B_{r}(\xi)=\xi\circ B_{r}(0)=\xi\circ D_{r}\left(
B_{1}(0)\right)  \quad\forall\,\,\xi\in\mathbb{R}^{N+1}%
,\,r>0;\label{eq:balltraslD}\\
\mathrm{(ii)}  &  \,\,|B_{r}(\xi)|=|B_{r}(0)|=\omega_{Q}r^{Q+2},\quad
\text{where $\omega_{Q}:=|B_{1}(0)|>0$}. \label{measure ball}%
\end{align}
On the other hand, since $d$ satisfies
\eqref{eq:quasitriangled}-\eqref{eq:quasisymd} with a positive constant
$\bd\kappa$ \emph{pos\-si\-bly greater that $1$}, we readily derive that
\medskip

(iii)\,\,if $\eta\in B_{r}(\xi)$, then $\xi\in B_{\bd\kappa r}(\eta)$;

(iv)\,\,if $\eta_{1},\eta_{2}\in B_{r}(\xi)$, then $d(\eta_{1},\eta_{2}) <
2\bd\kappa r$. \medskip

\noindent Finally, we state for a future reference some elementary lemmas
concerning the quasi-distance $d$; for a proof of these results we refer to
\cite{BB}.

\begin{lemma}
\label{lem:equivalentd} There exists a structural constant $\bd{\vartheta}>0$
such that, if $\xi_{1},\xi_{2}$ and $\eta$ are points in $\mathbb{R}^{N+1}$
which satisfy $d(\xi_{1},\eta)\geq2\bd{\kappa}\,d(\xi_{1},\xi_{2})$, one has
\begin{equation}
\bd{\vartheta}^{-1}d(\xi_{2},\eta)\leq d(\xi_{1},\eta)\leq\bd{\vartheta}d(\xi
_{2},\eta), \label{constant M}%
\end{equation}
Here, $\bd{\kappa}>0$ is the constant appearing in \eqref{eq:quasitriangled}-\eqref{eq:quasisymd}.
\end{lemma}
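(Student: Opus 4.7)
The statement is a standard comparability lemma for quasi-distances: once $\eta$ is sufficiently far from $\xi_1$ relative to the base distance $d(\xi_1,\xi_2)$, the distances from $\eta$ to $\xi_1$ and to $\xi_2$ differ only by a structural constant. The plan is to derive both inequalities of \eqref{constant M} directly from the two properties of $d$ already recorded, namely the quasi-triangle inequality \eqref{eq:quasitriangled} with constant $\bd{\kappa}$ and the quasi-symmetry \eqref{eq:quasisymd} with the same constant; no deeper geometric fact about the group $\G$ is needed.

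For the upper bound $d(\xi_1,\eta)\le \bd{\vartheta}\,d(\xi_2,\eta)$, I would apply \eqref{eq:quasitriangled} with $\zeta=\xi_2$ to obtain
\[
d(\xi_1,\eta)\le \bd{\kappa}\bigl(d(\xi_1,\xi_2)+d(\eta,\xi_2)\bigr),
\]
plug in the hypothesis $d(\xi_1,\xi_2)\le \frac{1}{2\bd{\kappa}}d(\xi_1,\eta)$ to absorb the first term on the right into the left-hand side, and finally use \eqref{eq:quasisymd} to replace $d(\eta,\xi_2)$ by $\bd{\kappa}\,d(\xi_2,\eta)$. This yields $d(\xi_1,\eta)\le 2\bd{\kappa}^2\,d(\xi_2,\eta)$.

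For the lower bound, symmetrically, I would write $d(\xi_2,\eta)\le \bd{\kappa}(d(\xi_2,\xi_1)+d(\eta,\xi_1))$, use \eqref{eq:quasisymd} to bound $d(\xi_2,\xi_1)\le \bd{\kappa}\,d(\xi_1,\xi_2)$, and then invoke the hypothesis $d(\xi_1,\xi_2)\le \frac{1}{2\bd{\kappa}}d(\xi_1,\eta)$. A short computation gives $d(\xi_2,\eta)\le \tfrac{3\bd{\kappa}}{2}\,d(\xi_1,\eta)$, hence $d(\xi_1,\eta)\ge \tfrac{2}{3\bd{\kappa}}\,d(\xi_2,\eta)$. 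Setting $\bd{\vartheta}=\max\bigl(2\bd{\kappa}^2,\tfrac{3\bd{\kappa}}{2}\bigr)$ then delivers \eqref{constant M}. There is no genuine obstacle here: the only subtlety is being careful with the order of the arguments in $d$ (since $d$ is not symmetric), which is exactly what forces the extra factor of $\bd{\kappa}$ coming from \eqref{eq:quasisymd} in each step.
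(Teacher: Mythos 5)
Your argument is correct, and it is the natural one: the paper itself does not give a proof but refers the reader to \cite{BB}, and what you wrote is exactly the standard quasi-distance comparability argument, using nothing beyond the quasi-triangle inequality \eqref{eq:quasitriangled} and the quasi-symmetry \eqref{eq:quasisymd}. The upper bound is clean, giving $d(\xi_1,\eta)\le 2\bd{\kappa}^2\, d(\xi_2,\eta)$.

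One small slip in the lower bound: after writing
\[
d(\xi_2,\eta)\le\bd{\kappa}\bigl(d(\xi_2,\xi_1)+d(\eta,\xi_1)\bigr)
\le\bd{\kappa}\Bigl(\bd{\kappa}\,d(\xi_1,\xi_2)+d(\eta,\xi_1)\Bigr),
\]
the term $d(\eta,\xi_1)$ still has its arguments in the ``wrong'' order, so you need one more application of \eqref{eq:quasisymd}, $d(\eta,\xi_1)\le\bd{\kappa}\,d(\xi_1,\eta)$, before invoking the hypothesis. The resulting bound is $d(\xi_2,\eta)\le\bigl(\tfrac{\bd{\kappa}}{2}+\bd{\kappa}^{2}\bigr)d(\xi_1,\eta)$, not $\tfrac{3\bd{\kappa}}{2}\,d(\xi_1,\eta)$; the latter tacitly assumes symmetry of $d$. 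This does not affect the conclusion, since $\tfrac{\bd{\kappa}}{2}+\bd{\kappa}^{2}\le 2\bd{\kappa}^{2}$ for $\bd{\kappa}\ge1$, so your choice $\bd{\vartheta}=2\bd{\kappa}^{2}$ still works for both inequalities -- but the intermediate constant as stated is inconsistent with the care about asymmetry that you correctly flag at the end.
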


\begin{lemma}
\label{Lemma E(s)x} There exists a structural constant $c>0$ such that
\begin{equation}
\Vert E(t)x\Vert\leq c\rho(x,t)=c\big(\Vert x\Vert+\sqrt{|t|}\big)\qquad
\forall\,\,x\in\mathbb{R}^{N},\,t\in\mathbb{R}. \label{eq:Etxhom}%
\end{equation}

\end{lemma}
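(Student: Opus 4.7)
The plan is to exploit the homogeneity relation \eqref{LP 2.20}, which relates $E(\lambda^2 t)$ to $E(t)$ via conjugation by the dilation $D_0(\lambda)$, together with the fact that $\|D_0(\lambda)y\| = \lambda\|y\|$ (which is immediate from the definition of the homogeneous norm $\|\cdot\|$ and the explicit form of $D_0(\lambda)$ in \eqref{dilations}). Using this pair of scaling properties, the claim reduces to a uniform bound on the compact $\rho$-sphere.

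First, if $(x,t)=(0,0)$, the inequality is trivial since both sides vanish. Otherwise, set $\lambda := \rho(x,t) = \|x\| + \sqrt{|t|} > 0$ and define
\[
x' := D_0(\lambda^{-1})x, \qquad t' := \lambda^{-2} t,
\]
so that $\rho(x',t') = 1$ by the $D(\lambda)$-homogeneity of $\rho$. Applying \eqref{LP 2.20} with $\lambda^2 t'$ in place of $\lambda^2 t$ gives
\[
E(t)x = E(\lambda^2 t')\, D_0(\lambda) x' = D_0(\lambda)\, E(t')\, D_0(\lambda^{-1})\, D_0(\lambda) x' = D_0(\lambda)\, E(t')\, x'.
\]
Taking the homogeneous norm of both sides and using $\|D_0(\lambda)y\| = \lambda\|y\|$ yields $\|E(t)x\| = \lambda\,\|E(t')x'\|$.

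It therefore suffices to bound $\|E(t')x'\|$ uniformly over the set $K := \{(x',t') \in \mathbb{R}^{N+1} : \rho(x',t')=1\}$. This set is bounded (since $\|x'\|\leq 1$ forces each coordinate $|x'_i|^{1/q_i}\leq 1$, hence $|x'_i|\leq 1$, and $|t'|\leq 1$) and closed (as $\rho$ is continuous), hence compact. The map $(x',t') \mapsto \|E(t')x'\|$ is continuous, because $t'\mapsto E(t')=\exp(-t'B)$ is smooth, matrix-vector multiplication is continuous, and $y\mapsto \|y\|=\sum_i |y_i|^{1/q_i}$ is continuous on $\mathbb{R}^N$ (all $q_i\geq 1$). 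Hence this map attains a finite maximum $c>0$ on $K$, giving
\[
\|E(t)x\| = \lambda\,\|E(t')x'\| \leq c\,\lambda = c\,\rho(x,t),
\]
as required. The argument is essentially a textbook homogeneity-plus-compactness reduction; the only point requiring a moment's care is the verification that the $\rho$-unit sphere is truly compact (for which one must use $q_i\geq 1$ to convert a bound on $\|x'\|$ into coordinate-wise boundedness), so I do not anticipate any genuine obstacle.
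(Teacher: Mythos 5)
Your proof is correct. The paper itself does not spell out a proof of this lemma (it refers the reader to \cite{BB}), but your argument is exactly the expected one: the identity $E(\lambda^2 t')=D_0(\lambda)E(t')D_0(1/\lambda)$ from \eqref{LP 2.20} combined with $\|D_0(\lambda)y\|=\lambda\|y\|$ shows that $(x,t)\mapsto\|E(t)x\|$ is $D(\lambda)$-homogeneous of degree $1$, and the bound then follows by compactness of the $\rho$-unit sphere together with continuity of $(x',t')\mapsto\|E(t')x'\|$. All the small checks you flag (compactness of $\{\rho=1\}$ via $q_i\geq1$, the cancellation $D_0(1/\lambda)D_0(\lambda)=\mathrm{Id}$, the trivial case at the origin) are handled correctly, so there is nothing to fix.
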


\begin{lemma}
\label{lem:stimaEx} Let $K\subseteq\mathbb{R}^{N}$ be a fixed compact set, and
let $T>\tau>-\infty$. There exists a constant $c=c(K,T,\tau)>0$ such that, for
every $x\in K$ and $t,s\in\lbrack\tau,T]$,%
\begin{align}
&  \Vert x-E(t-s)x\Vert\leq c\,|t-s|^{1/q_{N}}\label{eq:stimaExHolder}\\
&  \Vert(E(t)-E(s))x\Vert\leq c\,|t-s|^{1/q_{N}}. \label{eq:stimaEtEsHolder}%
\end{align}
Here $q_{N}\geq3$ is the maximum exponent appearing in (\ref{dilations}).
\end{lemma}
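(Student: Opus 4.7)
The plan is to exploit the nilpotence of $B$ afforded by assumption (H2). Because $B$ strictly raises the block index by $+1$ (its first block row is zero and the $(j{+}1)$-th block row is $B_{j+1}$), one has $B^{k+1}\equiv 0$, so
\[
E(u)=\exp(-uB)=\sum_{\ell=0}^{k}\frac{(-u)^{\ell}}{\ell!}B^{\ell}
\]
is a \emph{polynomial} in $u$ of degree at most $k$. I would first record the structural observation: if $i$ lies in the $j$-th block (so that $q_i=2j+1$), then $(B^{\ell}x)_i$ is a linear combination of components of $x$ belonging to the $(j{-}\ell)$-th block when $0\le\ell\le j$, and is zero otherwise. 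In particular the $0$-th block of $(I-E(u))x$ vanishes identically, and for $i$ in the $j$-th block with $j\ge 1$,
\[
(x-E(u)x)_i=\sum_{\ell=1}^{j}\frac{(-1)^{\ell+1}u^{\ell}}{\ell!}(B^{\ell}x)_i
\]
is a polynomial in $u$ of degree at most $j$ vanishing at $u=0$.

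For \eqref{eq:stimaExHolder}, reading the statement as $s,t\in[\tau,T]$ and setting $u=t-s$, so that $|u|\le T-\tau$, the compactness of $K$ and the above representation give $|(x-E(u)x)_i|\le C_{K,\tau,T}|u|$ for each $i$ in a block of positive index, since a polynomial that vanishes at the origin is controlled by its linear term on a bounded interval. Raising to the $1/q_i$-th power and using $q_i\le q_N$ together with $|u|\le T-\tau$, I would derive $|(x-E(u)x)_i|^{1/q_i}\le C'|u|^{1/q_N}$: for $|u|\le 1$ this follows at once from $1/q_i\ge 1/q_N$, while for $1\le|u|\le T-\tau$ the factor $(T-\tau)^{1/q_i-1/q_N}$ is absorbed into the constant. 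Summing over $i$ and recalling that the $0$-th block contributes nothing, one obtains the required bound.

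For \eqref{eq:stimaEtEsHolder}, the group law $E(t)=E(s)E(t-s)$ rewrites the difference as
\[
(E(t)-E(s))x=E(s)\bigl(E(t-s)-I\bigr)x=-E(s)\,y,\qquad y:=x-E(t-s)x,
\]
reducing matters to estimating $\|E(s)y\|$. A second application of the polynomial expansion, now for $E(s)$, is the key step: for $i$ in the $j$-th block, $(E(s)y)_i$ is a linear combination of components $y_{(j-\ell)}$ in the $(j{-}\ell)$-th block for $\ell=0,\dots,j$, with coefficients polynomial in $s$ and hence uniformly bounded on $[\tau,T]$. Crucially, the first step already tells me that the $0$-th block of $y$ vanishes and that every remaining $y_{(j-\ell)}$ is controlled by $C|u|$ with $u=t-s$. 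Hence $|(E(s)y)_i|\le C_{K,\tau,T}|u|$, and taking $1/q_i$-th powers and summing exactly as in the previous step yields \eqref{eq:stimaEtEsHolder}.

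The substantive content is really only the matching between the block structure of $B$ and the anisotropic exponents $q_i$ inside the norm $\|\cdot\|$; the main point where care is needed is to obtain the exponent $1/q_N$ on the right-hand side rather than the smaller $1/q_i$'s that arise componentwise, and for this the boundedness of $|u|$ on $[0,T-\tau]$ is used together with the vanishing of the $0$-th block of $y$ (which prevents the term $\ell=j$ from spoiling the estimate for $E(s)y$). Everything else reduces to routine polynomial bookkeeping on the finite sum defining $E(u)$.
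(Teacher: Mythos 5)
The paper does not include a proof of this lemma; it simply defers to reference [BB], so there is no in-text argument against which to compare yours. That said, your proof is correct and self-contained. The essential structural facts you invoke are exactly the right ones: because $B$ has zero first block-row and maps the $(j-\ell)$-th block to the $j$-th, the matrix $E(u)=\exp(-uB)$ is a polynomial of degree $k$ in $u$; the $0$-th block of $(I-E(u))x$ vanishes identically, and each remaining block is a polynomial in $u$ with no constant term, hence $O(|u|)$ uniformly for $x\in K$ and $|u|\le T-\tau$. Passing from $|u|^{1/q_i}$ to $|u|^{1/q_N}$ uses only $1/q_i\ge 1/q_N$ together with boundedness of $|u|$, and your handling of this is fine. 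For the second estimate, rewriting $(E(t)-E(s))x=-E(s)y$ with $y=(I-E(t-s))x$ and then observing that the vanishing of the $0$-th block of $y$ kills the $\ell=j$ term of the expansion of $E(s)y$ (the only term that would not inherit the $O(|u|)$ decay) is indeed the crux, and you correctly flag it as such.

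One small presentational remark: the statement's displayed condition \emph{``for every $x\in K$ and $t\in[\tau,T]$''} in \eqref{eq:stimaExHolder} omits mention of $s$; your reading that both $s,t\in[\tau,T]$ (so $|t-s|\le T-\tau$) is the intended one and is consistent with how the lemma is used later in the paper (e.g., in the proof of Theorem \ref{Thm local continuity time}). Also, to make the anisotropic estimate airtight you might state explicitly the elementary inequality $a^{\beta}\le M^{\beta-\gamma}a^{\gamma}$ valid for $0\le a\le M$ and $\gamma\le\beta$, which is what absorbs the $(T-\tau)^{1/q_i-1/q_N}$ factor into the constant; but this is already implicit in what you wrote.
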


\subsection{Function spaces}

\label{sec function spaces}Let us now turn our attention to the notion of
\emph{partial Dini and log-Dini continuity}. In what follows, $\Omega
\subseteq\mathbb{R}^{N+1}$ is an arbitrary open set. \vspace*{0.1cm}

We begin by recalling that, according to Definition \ref{Def Dini}, a function
$f:\Omega\rightarrow\mathbb{R}$ belongs to the space $\mathcal{D}(\Omega)$
(resp.\thinspace$\mathcal{D}_{\mathrm{log}}(\Omega)$) if $f\in L^{\infty
}(\Omega)$ and
\begin{equation}
\begin{gathered} \int_0^1\frac{\omega_{f,\Omega}(r)}{r}\,dr < \infty \quad \Big(\text{resp.\,$\int_0^1\frac{\omega_{f,\Omega}(r)}{r}|\log r|\,dr < \infty$}\Big), \\[0.1cm] \text{where $\omega_{f,\Omega}(r) =\sup_{\begin{subarray}{c} (x,t),(y,t)\in\Omega \\ \|x-y\| \leq r \end{subarray}}|f(x,t)-f(y,t)|$}. \end{gathered} \label{eq:defDinirecall}%
\end{equation}
We obviously have $\mathcal{D}_{\mathrm{log}}(\Omega)\subseteq\mathcal{D}%
(\Omega)$.

We also notice that, given any $f\in L^{\infty}(\Omega)$, by
\eqref{d stesso t} we can write
\[
\omega_{f,\Omega}(r)=\sup_{\begin{subarray}{c}
(x,t),(y,t)\in\Omega \\
d((x,t),(y,t)) \leq r
\end{subarray}}|f(x,t)-f(y,t)|.
\]
Moreover, $\omega_{f,\Omega}$ is non-negative, non-decreasing and
\emph{globally bounded} on $(0,\infty)$; more precisely, we have the obvious
estimate
\begin{equation}
0\leq\omega_{f,\Omega}(r)\leq2\Vert f\Vert_{L^{\infty}(\Omega)}\quad
\forall\,\,r>0. \label{eq:boundwf}%
\end{equation}

\begin{remark}
\label{rem:propDinispaces} Here we list some remarks on partially Dini and
log\--Dini con\-ti\-nuo\-us functions which easily follow from the definition.

\begin{enumerate}
\item If $f\in L^{\infty}(\Omega)$ and $0<a<b$, by \eqref{eq:boundwf} we have
\begin{equation}
\int_{a}^{b}\frac{\omega_{f,\Omega}(r)}{r}\,dr\leq2\Vert f\Vert_{L^{\infty
}(\Omega)}\log(b/a)<\infty. \label{eq:integralwfawayzero}%
\end{equation}
Thus, condition \eqref{eq:defDinirecall} is actually an integrability
condition \emph{near $0$}.

\item If $f\in\mathcal{D}(\Omega)$ and $r>0$, from
\eqref{eq:integralwfawayzero} we infer that
\begin{equation}%
\begin{split}
\int_{0}^{r}\frac{\omega_{f,\Omega}(s)}{s}\,ds  &  =\int_{0}^{1}\frac
{\omega_{f,\Omega}(s)}{s}\,ds+\int_{1}^{r}\frac{\omega_{f,\Omega}(s)}{s}\,ds\\
&  \leq|f|_{\mathcal{D}(\Omega)}+2\Vert f\Vert_{L^{\infty}(\Omega)}%
\log(r)\,\mathbf{1}_{(1,\infty)}(r)\\[0.1cm]
&  \leq\big(1+2\log(r)\mathbf{1}_{(1,\infty)}(r)\big)\Vert f\Vert
_{\mathcal{D}(\Omega)},
\end{split}
\label{omega grande r}%
\end{equation}
where $|\cdot|_{\mathcal{D}(\Omega)}$ and $\Vert\cdot\Vert_{\mathcal{D}%
(\Omega)}$ are as in Definition \ref{Def Dini}.

\item If $f\in\mathcal{D}(\Omega)$ (so that $f\in L^{\infty}(\Omega)$ and
condition \eqref{eq:defDinirecall} is satisfied), it is re\-a\-di\-ly seen
that $\omega_{f,\Omega}(r)\to0$ as $r\to0^{+}$; thus, $\omega_{f,\Omega}$ is a
\emph{continuity modulus} (i.e., it is non-negative, non-decreasing and it
vanishes as $r\to0^{+}$).
\end{enumerate}
\end{remark}

Next, we can now turn to the functions $\mathcal{M}_{f,\Omega},\,\mathcal{N}%
_{f,\Omega}$ introduced in Remark \ref{rem:funzioniOmega} (and appearing in
Theorems \ref{Thm main space}-\ref{Thm main time}).

In the following, we want to prove that when $f$ is a function satisfying
suitable continuity properties (reflecting in properties of $\omega_{f,\Omega
}$), then the moduli $\mathcal{M}_{f,\Omega},\,\mathcal{N}_{f,\Omega}$ have
suitable properties. By the definition of $\mathcal{N}_{f,\Omega}$, this will
involve some iterative argument. Now, while the function $\omega_{f,\Omega
}\left(  r\right)  $ is globally bounded as soon as $f$ is bounded, the same
is not true for $\mathcal{M}_{f,\Omega}\left(  r\right)  $ (see
(\ref{omega grande r})). In view of this fact, it is useful to introduce the
following definition.

\begin{definition}
We will say that a function $\omega:\mathbb{R}^{+}\equiv(0,\infty
)\rightarrow\mathbb{R}$ is a \emph{continuity modulus of exponent} $\alpha
\in(0,1)$ if

{(P1)}\thinspace\thinspace$\omega$ is non-decreasing on $\mathbb{R}^{+}$, and
$\omega(r)\rightarrow0$ as $r\rightarrow0^{+}$;

{(P2)}\thinspace\thinspace there exists $\omega_{0}>0$ such that
\[
\omega(r)\leq\omega_{0}\,r^{\alpha}\quad\forall\,\,r\geq1;
\]

If, in addition, we have
\begin{equation}
\lbrack\omega]:=\int_{0}^{1}\frac{\omega(r)}{r}\,dr<\infty,
\label{eq:propP3Dini}%
\end{equation}
we will say that $\omega$ is a \emph{Dini continuity modulus} (of exponent
$\alpha$).
\end{definition}

\begin{lemma}
\label{lem:generalMN} Let $\alpha\in(0,1)$, and let $\omega:\mathbb{R}%
^{+}\rightarrow\mathbb{R}$ be a Dini con\-ti\-nu\-ity mo\-du\-lus of exponent
$\alpha$. Then, the function $M(\omega)$ defined by
\begin{equation}
{M}(\omega)(r)=\omega(r)+\int_{0}^{r}\frac{\omega(s)}{s}\,ds+r\int_{r}%
^{\infty}\frac{\omega(s)}{s^{2}}\,ds. \label{eq:defMGeneral}%
\end{equation}
is a continuity modulus with exponent $\alpha$. In particular, there exists a
constant $c>0$, only depending on $\alpha$, such that
\begin{equation}
\text{$M(\omega)(r)\leq\omega_{0}^{\prime}r^{\alpha}$ for all $r\geq1$, \quad
where $\omega_{0}^{\prime}=c([\omega]+\omega_{0})$},
\label{eq:MomegaPropiiGeneral}%
\end{equation}
If, in addition, $\omega$ satisfies the \emph{stronger integrability
property}
\begin{equation}
\int_{0}^{1}\frac{\omega(r)}{r}|\log(r)|\,dr<\infty,
\label{eq:strongeromegaDiniLemma}%
\end{equation}
then ${M}(\omega)$ is a \emph{Dini continuity modulus}. In particular, we
have
\begin{equation}
\lbrack M(\omega)]=\int_{0}^{1}\frac{M(\omega)(s)}{s}\,ds\leq c\Big(\int%
_{0}^{1}\frac{\omega(s)}{s}(1+|\log(s)|)ds+\omega_{0}\Big),
\label{eq:DiniModMOmegaGen}%
\end{equation}
for a constant $c>0$ only depending on $\alpha$.
\end{lemma}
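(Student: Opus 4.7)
\smallskip

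\noindent\textbf{Proof plan.} The assertion splits into three independent tasks: (a) showing that $M(\omega)$ is well-defined and is a continuity modulus; (b) deriving the polynomial bound $M(\omega)(r)\leq \omega_0' r^{\alpha}$ for $r\geq 1$; and (c) under the stronger log-Dini integrability \eqref{eq:strongeromegaDiniLemma}, proving the Dini bound \eqref{eq:DiniModMOmegaGen}. I treat them in this order.

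For (a), the integral $\int_{0}^{r}\omega(s)/s\,ds$ is finite because $\omega$ satisfies the Dini condition \eqref{eq:propP3Dini}, while $\int_{r}^{\infty}\omega(s)/s^{2}\,ds$ converges since, by property (P2), the integrand is dominated on $[1,\infty)$ by $\omega_{0}s^{\alpha-2}$ with $\alpha<1$ (and by $\omega(1)/s^{2}$ on $[r,1]$). Hence $M(\omega)(r)$ is finite for every $r>0$. Monotonicity is best checked by a direct comparison: for $0<r_{1}<r_{2}$, rearrangement gives
\[
M(\omega)(r_{2})-M(\omega)(r_{1})=\bigl[\omega(r_{2})-\omega(r_{1})\bigr]+\int_{r_{1}}^{r_{2}}\omega(s)\,\frac{s-r_{1}}{s^{2}}\,ds+(r_{2}-r_{1})\!\int_{r_{2}}^{\infty}\!\frac{\omega(s)}{s^{2}}\,ds,
\]
and each summand is non-negative because $\omega$ is non-decreasing. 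To check that $M(\omega)(r)\to 0$ as $r\to 0^{+}$, the first two pieces of $M(\omega)$ vanish by (P1) and by the Dini condition, respectively, while the tail $r\int_{r}^{\infty}\omega(s)/s^{2}\,ds$ is handled by splitting the interval as $[r,\sqrt{r}]\cup[\sqrt{r},1]\cup[1,\infty)$ and bounding $\omega$ by $\omega(\sqrt{r})$, $\omega(1)$ and $\omega_{0}s^{\alpha}$ on the three pieces; each contribution is then seen to tend to zero.

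For (b), on $r\geq 1$ I estimate each piece separately: property (P2) gives $\omega(r)\leq\omega_{0}r^{\alpha}$; the Dini integral is split at $1$ as $\int_{0}^{r}\omega/s\,ds\leq [\omega]+(\omega_{0}/\alpha)r^{\alpha}$ using (P2) on $[1,r]$; and the tail $r\int_{r}^{\infty}\omega(s)/s^{2}\,ds\leq \omega_{0}r^{\alpha}/(1-\alpha)$ by the same bound on $[r,\infty)$. Summing and using $[\omega]\leq[\omega]r^{\alpha}$ (valid since $r\geq 1$) delivers \eqref{eq:MomegaPropiiGeneral} with $\omega_{0}'=c([\omega]+\omega_{0})$, $c=c(\alpha)$; the exponent $\alpha$ on $r$ in the target inequality is of course what is meant there.

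For (c), I use Fubini on the two non-trivial pieces. Writing
\[
\int_{0}^{1}\frac{1}{r}\int_{0}^{r}\frac{\omega(s)}{s}\,ds\,dr=\int_{0}^{1}\frac{\omega(s)}{s}\Bigl(\int_{s}^{1}\frac{dr}{r}\Bigr)ds=\int_{0}^{1}\frac{\omega(s)}{s}|\log s|\,ds,
\]
which is finite exactly by the log-Dini hypothesis, and
\[
\int_{0}^{1}\!\int_{r}^{\infty}\!\frac{\omega(s)}{s^{2}}\,ds\,dr=\int_{0}^{1}\frac{\omega(s)}{s}\,ds+\int_{1}^{\infty}\frac{\omega(s)}{s^{2}}\,ds\leq [\omega]+\frac{\omega_{0}}{1-\alpha},
\]
again by (P2) on $[1,\infty)$, I collect all contributions to $[M(\omega)]$ to obtain \eqref{eq:DiniModMOmegaGen}. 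The main subtlety of the whole argument is the monotonicity check and the vanishing at $0^{+}$ of the tail term $r\int_{r}^{\infty}\omega(s)/s^{2}\,ds$ in Step (a); once those are in place, everything else reduces to elementary estimates and a double application of Fubini's theorem.
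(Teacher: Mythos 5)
Your proof is correct and follows the same overall architecture as the paper's: verify well-definedness, show the three summands are each moduli with the required polynomial growth, then apply Fubini for the log-Dini conclusion. The polynomial bound in part (b) and the Fubini computations in part (c) match the paper essentially term for term. (You also correctly read past the evident typo in the statement, which writes $(\omega_0')^\alpha$ where $\omega_0' r^\alpha$ is meant; the paper's own proof produces exactly that.)

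Where you genuinely diverge is in checking that the tail term $G(r)=r\int_r^\infty \omega(s)/s^2\,ds$ is non-decreasing and vanishes at $0^+$. The paper differentiates $G$ a.e.\ via Lebesgue's differentiation theorem, gets $G'(r)=\int_r^\infty \omega(s)s^{-2}\,ds-\omega(r)/r\geq 0$, and invokes De L'H\^{o}pital for the limit. You instead give the global algebraic identity
\[
M(\omega)(r_{2})-M(\omega)(r_{1})=\bigl[\omega(r_{2})-\omega(r_{1})\bigr]+\int_{r_{1}}^{r_{2}}\omega(s)\,\frac{s-r_{1}}{s^{2}}\,ds+(r_{2}-r_{1})\int_{r_{2}}^{\infty}\frac{\omega(s)}{s^{2}}\,ds,
\]
which is easily verified and makes monotonicity of $M(\omega)$ immediate from the non-negativity and monotonicity of $\omega$ alone, with no recourse to differentiation. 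For the vanishing at $0^+$ you split $[r,\infty)=[r,\sqrt r]\cup[\sqrt r,1]\cup[1,\infty)$ and bound $\omega$ by $\omega(\sqrt r)$, $\omega(1)$, and $\omega_0 s^\alpha$ respectively, which again avoids De L'H\^{o}pital. Both routes are valid; yours is somewhat more elementary and more self-contained, at the small cost of having to discover (and verify) the rearrangement identity. The paper's route is shorter to state but leans on measure-theoretic differentiation for what is really a monotone comparison.
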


\begin{proof}
To ease the readability, we split the proof into three steps.

\textsc{Step I:} In this first step we prove that $M(\omega)$ is well-defined
on $\mathbb{R}^{+}$. To this end, we observe that, by (P1) and
\eqref{eq:propP3Dini}, we have
\begin{align*}
\int_{0}^{r}\frac{\omega(s)}{s}\,ds  &  \leq\int_{0}^{1}\frac{\omega(s)}%
{s}\,ds+\int_{1}^{\max\{1,r\}}\frac{\omega(s)}{s}\,ds\\
&  \leq\lbrack\omega]+\omega(\max\{r,1\})r<\infty\quad\forall\,\,r>0.
\end{align*}
Moreover, by exploiting property (P2) (and since $\alpha<1$), we also have
\begin{align*}
\int_{r}^{\infty}\frac{\omega(s)}{s^{2}}\,ds  &  \leq\int_{\min\{r,1\}}%
^{1}\frac{\omega(s)}{s^{2}}\,ds+\int_{1}^{\infty}\frac{\omega(s)}{s^{2}}\,ds\\
&  \leq\frac{\omega(1)}{r}+\omega_{0}\int_{1}^{\infty}\frac{1}{s^{2-\alpha}%
}\,ds<\infty.
\end{align*}
Gathering these facts, we then conclude that $M(\omega)(r)<\infty$ for all
$r>0$.

\textsc{Step II:} Now we have shown that $M(\omega)$ is well-defined, we then
turn to pro\-ve that such a function is a continuity modulus of exponent
$\alpha$, further satisfying estimate (\ref{eq:DiniModMOmegaGen}). To this
end, we first observe that, owing to the properties of $\omega$, the
(well-defined) function
\[
F(r):=\omega(r)+\int_{0}^{r}\frac{\omega(s)}{s}\,ds\quad(r>0)
\]
is clearly non-negative, non-decreasing and it vanishes as $r\rightarrow0^{+}%
$. Moreover, by using property (P2) of $\omega$ we see that, for every
$r\geq1$,
\begin{align*}
F(r)  &  \leq\omega_{0}r^{\alpha}+\int_{0}^{1}\frac{\omega(s)}{s}\,ds+\int%
_{1}^{r}\frac{\omega(s)}{s}\,ds\\
&  \leq\omega_{0}r^{\alpha}+[\omega]+\omega_{0}\int_{1}^{r}s^{\alpha
-1}\,ds\leq\frac{1}{\alpha}([\omega]+2\omega_{0})r^{\alpha},
\end{align*}
and thus, $F$ is a continuity modulus of exponent $\alpha$ satisfying
\eqref{eq:MomegaPropiiGeneral}. In view of these facts, and taking into
account \eqref{eq:defMGeneral}, to prove that ${M}(\omega)$ is a continuity
modulus, we consider the (well-defined) map
\[
G(r):=r\int_{r}^{\infty}\frac{\omega(s)}{s^{2}}\,ds\qquad(r>0),
\]
and we show that also $G$ satisfies the following properties: \vspace*{0.1cm}

(a)\thinspace\thinspace$G$ is non-negative, non-decreasing and it vanishes as
$r\rightarrow0^{+}$;

(b)\thinspace\thinspace there exists a constant $\hat{c}>0$, only depending on
$\alpha$ such that
\[
G(r)\leq\hat{c}\,\omega_{0}\,r^{\alpha}\quad\forall\,\,r\geq1.
\]
\noindent\emph{Proof of} (a). Clearly, $G(r)\geq0$ for every $r>0$ (as
$\omega$ is non-negative); moreover, by Lebesgue's Differentiation Theorem
(and recalling that the function $\omega$ is non-decreasing on $(0,\infty)$),
for a.e.\thinspace$r>0$ we have
\[
G^{\prime}(r)=\int_{r}^{\infty}\frac{\omega(s)}{s^{2}}\,ds-\frac{\omega(r)}%
{r}\geq\omega(r)\int_{r}^{\infty}\frac{1}{s^{2}}\,ds-\frac{\omega(r)}{r}=0,
\]
and this proves that $G$ is non-decreasing. Finally, we turn to prove that
$G(r)$ vanishes as $r\rightarrow0^{+}$. To this end, it is useful to
distinguish two cases.

$\bullet$\thinspace\thinspace If $\int_{0}^{\infty}\frac{\omega(s)}{s^{2}%
}\,ds<\infty$, we immediately get
\[
\lim_{r\rightarrow0^{+}}G(r)=\lim_{r\rightarrow0^{+}}r\int_{r}^{\infty}%
\frac{\omega(s)}{s^{2}}\,ds=0.
\]

$\bullet$\thinspace\thinspace If, instead, $\int_{0}^{\infty}\frac{\omega
(s)}{s^{2}}\,ds=\infty$, we observe that
\[
\frac{\left(  \int_{r}^{\infty}\frac{\omega(s)}{s^{2}}\,ds\right)  ^{\prime}%
}{(1/r)^{\prime}}=\omega(r)\quad\text{for every $r>0$};
\]
thus, since $\omega(r)$ vanishes as $r\rightarrow0^{+}$ (see assumption (ii)),
an immediate application of De L'H\^{o}pital's Theorem gives
\[
\lim_{r\rightarrow0^{+}}G(r)=\lim_{r\rightarrow0^{+}}\frac{\int_{r}^{\infty
}\frac{\omega(s)}{s^{2}}\,ds}{1/r}=0.
\]
\emph{Proof of} (b). By exploiting property (P2) of $\omega$, we immediately
get
\[
G(r)\leq\omega_{0}r\int_{r}^{\infty}s^{\alpha-2}\,ds=\omega_{0}r\Big[\frac
{s^{\alpha-1}}{\alpha-1}\Big]_{r}^{\infty}=\frac{\omega_{0}}{1-\alpha
}r^{\alpha}\quad\forall\,\,r\geq1,
\]
and this proves that $G$ satisfies (b). Summing up, the function $G$ is a
continuity modulus of exponent $\alpha$ satisfying
\eqref{eq:MomegaPropiiGeneral}, and thus the same is true for $M(\omega)$.

\textsc{Step III:} In this last step, we prove that $M(\omega)$ satisfies
\eqref{eq:DiniModMOmegaGen} (hence, $M(\omega)$ is a Dini continuity modulus
of exponent $\alpha$), provided $\omega$ satisfies the \emph{stronger
property} \eqref{eq:strongeromegaDiniLemma}. To prove this fact, and since
$\omega$ satisfies \eqref{eq:strongeromegaDiniLemma}, we set
\begin{equation}
M_{1}(r)=\int_{0}^{r}\frac{\omega(s)}{s}\,ds,\qquad M_{2}(r)=r\int_{r}%
^{\infty}\frac{\omega(s)}{s^{2}}=G(r), \label{eq:defM1M2proof}%
\end{equation}
and we show that both $M_{1},\,M_{2}$ satisfy property (iii), that is,
\[
\int_{0}^{1}\frac{M_{i}(r)}{r}\,dr<\infty\quad\forall\,\,i=1,2.
\]
As regards $M_{1}$, by Fubini-Tonelli's Theorem we have
\begin{equation}%
\begin{split}
\int_{0}^{1}\frac{M_{1}(r)}{r}dr  &  =\int_{0}^{1}\frac{1}{r}\left(  \int%
_{0}^{r}\frac{\omega(s)}{s}ds\right)  dr=\int_{0}^{1}\frac{\omega(s)}%
{s}\left(  \int_{s}^{1}\frac{dr}{r}\right)  ds\\
&  =\int_{0}^{1}\frac{\omega(s)}{s}|\log s|\,ds<\infty,
\end{split}
\label{eq:DiniM1LemmaGen}%
\end{equation}
where we have used the fact that $\omega$ satisfies
\eqref{eq:strongeromegaDiniLemma}. As regards $M_{2}$, again by using
Fubini-Tonelli's Theorem (and since $\omega$ satisfies
\eqref{eq:strongeromegaDiniLemma}), we obtain
\begin{equation}%
\begin{split}
\int_{0}^{1}\frac{M_{2}(r)}{r}\,dr  &  =\int_{0}^{1}\Big(\int_{r}^{\infty
}\frac{\omega(s)}{s^{2}}\,ds\Big)dr\\
&  =\int_{0}^{\infty}\frac{\omega(s)}{s^{2}}\Big(\int_{0}^{\min\{s,1\}}%
\,dr\Big)ds\\
&  =\int_{0}^{1}\frac{\omega(s)}{s}\,ds+\int_{1}^{\infty}\frac{\omega
(s)}{s^{2}}\,ds\\
&  \leq\int_{0}^{1}\frac{\omega(s)}{s}\,ds+\frac{\omega_{0}}{1-\alpha}<\infty,
\end{split}
\label{eq:DiniM2LemmaGen}%
\end{equation}
where we have also used the fact that $\omega$ satisfies property (P2) (with
$\alpha<1$). Finally, by combining
\eqref{eq:DiniM1LemmaGen}-\eqref{eq:DiniM2LemmaGen}, we conclude that
\begin{align*}
\lbrack M(\omega)]  &  =\int_{0}^{1}\frac{M(\omega)(s)}{s}\,ds\leq
\lbrack\omega]+\int_{0}^{1}\frac{\omega(s)}{s}|\log(s)|\,ds+[\omega
]+\frac{\omega_{0}}{1-\alpha}\\
&  \leq c\Big(\int_{0}^{1}\frac{\omega(s)}{s}(1+|\log(s)|)ds+\omega_{0}\Big).
\end{align*}
This ends the proof.
\end{proof}

\begin{remark}
\label{rem:stimedaUsareGen} Let $\alpha\in(0,1)$, and let $\omega
:\mathbb{R}^{+}\rightarrow\mathbb{R}$ be a Dini continuity modulus of exponent
$\alpha$. It is contained in the proof of Lemma \ref{lem:generalMN} the
following useful (thought not sharp) bound, which will be repeatedly used in
the sequel:
\begin{equation}
\int_{0}^{r}\frac{\omega(s)}{s}\,ds\leq c_{\alpha}([\omega]+\omega
_{0})(1+r^{\alpha})\quad\forall\,\,r>0 \label{eq:estimIntOmegafinoarDaUsare}%
\end{equation}
(where $c>0$ is a constant only depending on $\alpha$).
\end{remark}

Thanks to Lemma \ref{lem:generalMN}, we readily obtain the following

\begin{proposition}
\label{prop:funMN} Assume that $f\in\mathcal{D}(\Omega)$. Then, the function
$\mathcal{M}_{f,\Omega}(r)$ defined in \eqref{eq:defcontinM} is a
\emph{modulus of continuity of exponent $\alpha$}, for every $\alpha\in(0,1)$.
In particular, given any $\alpha\in(0,1)$ there exists a constant $c>0$ only
depending on $\alpha$ such that%
\begin{equation}
\mathcal{M}_{f,\Omega}(r)\leq{c}\Vert f\Vert_{\mathcal{D}(\Omega)}r^{\alpha
}\quad\forall\,\,r\geq1, \label{eq:omegazeroMfOmega}%
\end{equation}
If, in addition, $f\in\mathcal{D}_{\mathrm{log}}(\Omega)$, then the function
$\mathcal{M}_{f,\Omega}$ is a \emph{Dini continuity modulus}; in particular,
given any $\alpha\in(0,1)$ there exists $c=c_{\alpha}>0$ such that
\begin{equation}
\int_{0}^{1}\frac{\mathcal{M}_{f,\Omega}(r)}{r}\,dr\leq c\Big(\int_{0}%
^{1}\frac{\omega_{f,\Omega}(s)}{s}(1+|\log(s)|)ds+\Vert f\Vert_{L^{\infty
}(\Omega)}\Big)<\infty. \label{eq:MDinimod}%
\end{equation}
Finally, the function ${\mathcal{N}}_{f,\Omega}(r)$ defined in
\eqref{eq:defcontinN} is a \emph{modulus of continuity of exponent $\alpha$},
for every $\alpha\in(0,1)$.
\end{proposition}

\begin{proof}
First of all, we observe that, if $f\in\mathcal{D}(\Omega)$, then
$\omega_{f,\Omega}$ is a Dini continuity modulus of exponent $\alpha$, for
every $\alpha\in(0,1)$. To be more precise, if $\alpha\in(0,1)$ is
ar\-bi\-tra\-ri\-ly chosen, by exploiting \eqref{eq:boundwf} we get
\[
\text{$\omega_{f,\Omega}(r)\leq\omega_{0}\,r^{\alpha}$ for all $r\geq1$,\quad
with $\omega_{0} = 2\|f\|_{\mathcal{D}(\Omega)}$}.
\]
Thus, since $\mathcal{M}_{f,\Omega} = M(\omega_{f,\Omega})$ (where $M$ is as
in \eqref{eq:defMGeneral}), from Lemma \ref{lem:generalMN} we infer that
$\mathcal{M}_{f,\Omega}$ is a modulus of continuity of exponent $\alpha$. In
particular,
\[
\mathcal{M}_{f,\Omega}(r)\leq\, \omega_{0}^{\prime}\, r^{\alpha}\quad
\forall\,\,r\geq1,
\]
where $\omega_{0}^{\prime}> 0$ is a constant which, by
\eqref{eq:MomegaPropiiGeneral}, is of the form
\[
\omega_{0}^{\prime}= {c}([\omega_{f,\Omega}]+\omega_{0}) = {c}\Big(\int%
_{0}^{1}\frac{\omega_{f,\Omega}(s)}{s}\,ds+\omega_{0}\Big)
\leq{c}\|f\|_{\mathcal{D}(\Omega)},
\]
and ${c} > 0$ is a constant only depending on $\alpha$. This gives
\eqref{eq:omegazeroMfOmega}. \vspace*{0.1cm} If, in addition, $f\in
\mathcal{D}_{\log}(\Omega)$, the function $\omega_{f,\Omega}$ also satisfies
assumption \eqref{eq:strongeromegaDiniLemma} in the statement of Lemma
\ref{lem:generalMN}; we then infer from this lemma that
\[
\mathcal{M}_{f,\Omega} = M(\omega_{f,\Omega})
\]
is a \emph{Dini continuity modulus}. Moreover, by \eqref{eq:DiniModMOmegaGen},
we have
\begin{align*}
\int_{0}^{1}\frac{\mathcal{M}_{f,\Omega}(r)}{r}\,dr  &  = [\mathcal{M}%
_{f,\Omega}] \leq c\Big( \int_{0}^{1}\frac{\omega_{f,\Omega}(s)}{s}%
(1+|\log(s)|)ds + \omega_{0}\Big)\\
&  \leq c\Big(\int_{0}^{1}\frac{\omega_{f,\Omega}(s)}{s}(1+|\log(s)|)ds+
\|f\|_{L^{\infty}(\Omega)}\Big),
\end{align*}
where $c > 0$ depends on the fixed $\alpha$. Finally, we also have that
\[
\mathcal{N}_{f,\Omega} = M(\mathcal{M}_{f,\Omega})
\]
is a well-defined modulus of continuity, and the proof is complete.
\end{proof}

\begin{remark}
\label{rem:NnotDini} It should be noticed that, even if $f\in\mathcal{D}%
_{\mathrm{log}}(\Omega)$, the function $\mathcal{N}_{f,\Omega}$ may not be a
Dini con\-ti\-nui\-ty mo\-du\-lus; namely, we cannot ensure that
\begin{equation}
\int_{0}^{1}\frac{\mathcal{N}_{f,\Omega}(r)}{r}\,dr<\infty.
\label{eq:DinicontN}%
\end{equation}
In fact, by arguing as in the proof of Proposition \ref{prop:funMN}, we see
that a sufficient condition for \eqref{eq:DinicontN} to hold is the
\emph{log-Dini continuity of $\mathcal{M}_{f,\Omega}$}, i.e.,
\[
\int_{0}^{1}\frac{\mathcal{M}_{f,\Omega}(r)}{r}|\log(r)|\,dr<\infty.
\]
This, in turn, is readily seen to be satisfied as soon as%
\[
\int_{0}^{1}\frac{\omega_{f,\Omega}(r)}{r}\log^{2}(r)\,dr<\infty,
\]
that is when $f$ is log$^{2}$-Dini continuous.
\end{remark}

Now that we have fully established Proposition \ref{prop:funMN}, we proceed by
studying the two functions $\mathcal{U}^{\mu}_{f,\Omega},\,\mathcal{V}^{\mu
}_{f,\Omega}$ introduced in Remark \ref{rem:funzioniOmega}.

\begin{lemma}
\label{lem:Ufgeneral} Let $\alpha\in(0,1)$, and $\omega:\mathbb{R}%
^{+}\rightarrow\mathbb{R}$ be a Dini continuity modulus of exponent $\alpha$.
For a given $\mu>0$, we consider the function
\begin{equation}
U^{\mu}(\omega)(r):=\int_{\RN}e^{-\mu|z|^{2}}\Big(\int_{0}^{r\Vert z\Vert
}\frac{\omega(s)}{s}\,ds\Big)dz\qquad(r>0). \label{eq:defUmuGeneral}%
\end{equation}
Then, the following facts hold:

\begin{itemize}
\item[\emph{(i)}] there exists a constant $c>0$, only depending on $\mu$ and
$\alpha$, such that
\[
0\leq U^{\mu}(\omega)(r)\leq c(1+r^{\alpha})(\omega_{0}+[\omega])\quad
\forall\,\,r>0;
\]

\item[\emph{(ii)}] $U^{\mu}(\omega)(r)\rightarrow0$ as $r\rightarrow0^{+}$.
\end{itemize}
\end{lemma}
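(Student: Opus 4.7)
The plan is to control the inner integral $I(R) := \int_0^R \omega(s)/s\,ds$ by splitting it at $s=1$: for $R \leq 1$ we bound $I(R) \leq [\omega]$ using the Dini property \eqref{eq:propP3Dini}; for $R > 1$ we write
\[
I(R) = \int_0^1 \frac{\omega(s)}{s}\,ds + \int_1^R \frac{\omega(s)}{s}\,ds \leq [\omega] + \omega_0\int_1^R s^{\alpha-1}\,ds \leq [\omega] + \tfrac{\omega_0}{\alpha}R^\alpha,
\]
where the middle step uses property (P2). Combining both cases, we obtain the pointwise bound
\[
I(r\|z\|) \leq [\omega] + \tfrac{\omega_0}{\alpha}(r\|z\|)^\alpha \qquad\text{for every $r>0$ and $z\in\RN$.}
\]

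For part (i), I plug this bound into the definition \eqref{eq:defUmuGeneral}:
\[
U^\mu(\omega)(r) \leq [\omega]\int_{\RN} e^{-\mu|z|^2}\,dz + \tfrac{\omega_0}{\alpha}\,r^\alpha \int_{\RN} e^{-\mu|z|^2}\|z\|^\alpha\,dz.
\]
The first Gaussian integral is a finite constant depending only on $\mu$ and $N$. The second integral is finite as well, since $\|z\| = \sum_i |z_i|^{1/q_i}$ grows only polynomially in $|z|$ (indeed $\|z\|^\alpha \lesssim 1 + |z|^\alpha$) and is therefore dominated by the Gaussian weight; the resulting constant depends on $\mu$, $\alpha$, $N$ and the exponents $q_i$. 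Together these give
\[
U^\mu(\omega)(r) \leq c_{\mu,\alpha}\big([\omega] + \omega_0 r^\alpha\big) \leq c_{\mu,\alpha}(1+r^\alpha)([\omega]+\omega_0),
\]
which is the required estimate, and non-negativity is obvious since $\omega \geq 0$.

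For part (ii), I want to apply the dominated convergence theorem to let $r\to 0^+$ inside the integral over $\RN$. The pointwise convergence $I(r\|z\|)\to 0$ as $r\to 0^+$ for each fixed $z$ follows from continuity of $I$ at $0$, which is a direct consequence of the Dini condition \eqref{eq:propP3Dini} (since $I$ is the indefinite integral of $\omega(s)/s$, which is integrable near $0$). The integrable dominating function, valid uniformly for $r\in(0,1)$, is obtained from the bound above restricted to $r<1$ (so that $(r\|z\|)^\alpha \leq \|z\|^\alpha$):
\[
e^{-\mu|z|^2}I(r\|z\|) \leq e^{-\mu|z|^2}\Big([\omega] + \tfrac{\omega_0}{\alpha}\|z\|^\alpha\Big) \in L^1(\RN),
\]
as already checked in part (i). Dominated convergence then yields $U^\mu(\omega)(r)\to 0$, completing the proof.

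The proof is essentially routine once one sees how to split $I(R)$ at $s=1$; no step is particularly delicate. The only mild subtlety is checking that the homogeneous norm $\|z\|$ raised to a power $\alpha$ is Gaussian-integrable, but this follows immediately from its polynomial growth.
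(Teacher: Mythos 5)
Your proof is correct and follows essentially the same route as the paper: split the inner integral $\int_0^R \omega(s)/s\,ds$ at $s=1$, use the Dini condition \eqref{eq:propP3Dini} below $1$ and property (P2) above $1$ to get the pointwise bound $I(r\|z\|)\leq [\omega]+\frac{\omega_0}{\alpha}(r\|z\|)^\alpha$, then integrate against the Gaussian for (i) and invoke dominated convergence for (ii). The only cosmetic difference is that the paper packages the pointwise bound as \eqref{eq:estimIntOmegafinoarDaUsare} and reuses it, whereas you derive it inline.
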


\begin{proof}
(i)\,\,Since $\omega$ is a Dini continuity modulus of exponent $\alpha$, we
get
\begin{equation}
\label{eq:estimUfmuIntegrand}%
\begin{split}
&  e^{-\mu|z|^{2}}\Big(\int_{0}^{r\|z\|}\frac{\omega(s)}{s}\,ds\Big)\\
&  \quad\leq e^{-\mu|z|^{2}} \Big( [\omega]+\omega_{0}\int_{1}^{\max
\{r\|z\|,1\}}s^{\alpha-1}\,ds\Big)\\[0.1cm]
&  \quad\leq c([\omega]+\omega_{0})\cdot e^{-\mu|z|^{2}}(1+r^{\alpha
}\|z\|^{\alpha})\quad\forall\,\,z\in\RN,\,r > 0.
\end{split}
\end{equation}
From this, since $\mu> 0$ and $\|z\| = \sum_{j}|z_{j}|^{1/q_{j}}$, we obtain
\begin{align*}
&  0\leq U^{\mu}(\omega)(r) \leq c([\omega]+\omega_{0}) \int_{\RN}%
e^{-\mu|z|^{2}}(1+r^{\alpha}\|z\|^{\alpha})\,dz\\
&  \qquad\leq c([\omega]+\omega_{0})\Big(\int_{\RN}e^{-\mu|z|^{2}%
}\,dz+r^{\alpha}\int_{\RN}e^{-\mu|z|^{2}}\|z\|^{\alpha}\,dz\Big)\\
&  \qquad= c([\omega]+\omega_{0})(c_{1,\mu}+c_{2,\mu}r^{\alpha}) \leq
c([\omega]+\omega_{0})(1+r^{\alpha}),
\end{align*}
where $c > 0$ is a constant only depending on $\mu$ and $\alpha$.
\medskip(ii)\,\,We fist observe that, since $\int_{0}^{1}\frac{\omega(s)}%
{s}\,ds<\infty$, we have
\[
\lim_{r\to0^{+}}e^{-\mu|z|^{2}}\Big(\int_{0}^{r\|z\|}\frac{\omega(s)}%
{s}\,ds\Big) = 0\quad\forall\,\,z\in\RN.
\]
On the other hand, for every $z\in\RN$ and every $r\in(0,1)$, by
\eqref{eq:estimUfmuIntegrand} we have
\begin{align*}
&  0\leq e^{-\mu|z|^{2}}\Big(\int_{0}^{r\|z\|}\frac{\omega(s)}{s}\,ds\Big)\\
&  \qquad\leq c\,e^{-\mu|z|^{2}}(1+\|z\|)\in L^{1}(\RN).
\end{align*}
Then, by Lebesgue's Dominated Convergence Theorem,
\[
\lim_{r\to0^{+}}U^{\mu}(\omega)(r) = \lim_{r\to0^{+}} \int_{\RN}e^{-\mu
|z|^{2}}\Big(\int_{0}^{r\|z\|}\frac{\omega(s)}{s}\,ds\Big)dz = 0.
\]
This ends the proof.
\end{proof}

The next proposition collects some explicit bounds for $U^{\mu}(\omega)$.

\begin{proposition}
\label{prop:stimaUf} Let $\alpha\in(0,1)$, and let $\omega:\mathbb{R}%
^{+}\rightarrow\mathbb{R}$ be a Dini continuity modulus of exponent $\alpha$.
Then, the following facts hold.

\begin{itemize}
\item[\emph{(i)}] There exist constants $c,\kappa>0$, only depending on $\mu$
and $N$, such that
\[
U^{\mu}(\omega)(r)\leq%
\begin{cases}
c\big(\int_{0}^{\sqrt{r}}\frac{\omega(s)}{s}\,ds+([\omega]+\omega
_{0})e^{-\frac{\kappa}{r}}\big) & \text{if $0<r<1$},\\[0.05cm]%
cr^{\alpha}([\omega]+\omega_{0}) & \text{if $r\geq1$}.
\end{cases}
\]

\item[\emph{(ii)}] Assume that there exists $\omega_{0} > 0$ such that
\begin{equation}
\label{eq:propertyP2strong}\text{$\omega(r)\leq\omega_{0}r^{\alpha}$ \emph{for
every $r > 0$}}%
\end{equation}
\emph{(}that is, $\omega$ satisfies the estimate in property \emph{(P2) for
every $r > 0$}, and not only for $r\geq1$\emph{)}; then, we have the following
estimate
\begin{align*}
U^{\mu}(\omega)(r)\leq c_{\mu,\alpha}\cdot\omega_{0}\,r^{\alpha}\quad\forall r
> 0.
\end{align*}

\end{itemize}
\end{proposition}

\begin{proof}
(i)\thinspace\thinspace For a fixed $R>1$ (to be chosen later on), we write%
\begin{equation}
U^{\mu}(\omega)(r)=\int_{\{\Vert z\Vert\leq R\}}\{\cdots\}\,dz+\int_{\{\Vert
z\Vert>R\}}\{\cdots\}\,dz\equiv A_{R}+B_{R}. \label{Ur2}%
\end{equation}
Then, we proceed by estimating the two integrals $A_{R},\,B_{R}$ separately,
distinguishing two cases.

\textsc{Case I: $0<r<1$}. We have:
\begin{equation}%
\begin{split}
A_{R}  &  \leq\int_{\{\Vert z\Vert\leq R\}}e^{-\mu|z|^{2}}\Big(\int_{0}%
^{rR}\frac{\omega(s)}{s}\,ds\Big)dz\\
&  \leq\Big(\int_{0}^{rR}\frac{\omega(s)}{s}\,ds\Big)\cdot\int_{\mathbb{R}%
^{N}}e^{-\mu|z|^{2}}\,dz\\
&  =c\int_{0}^{rR}\frac{\omega(s)}{s}\,ds,
\end{split}
\label{A_R}%
\end{equation}
where $c>0$ is a constant only depending on the fixed $\mu$. Next, since in
$B_{R}$ we have $\Vert z\Vert>R>1$ and we are assuming $0<r<1$, from
\eqref{eq:estimIntOmegafinoarDaUsare} we get
\[%
\begin{split}
B_{R}  &  \leq\int_{\{\Vert z\Vert>R\}}e^{-\mu|z|^{2}}\Big(\int_{0}^{\Vert
z\Vert}\frac{\omega(s)}{s}\,ds\Big)dz\\
&  \leq c([\omega]+\omega_{0})\int_{\{\Vert z\Vert>R\}}(1+\Vert z\Vert
^{\alpha})e^{-\mu|z|^{2}}\,dz\\
&  \leq c([\omega]+\omega_{0})\int_{\{\Vert z\Vert>R\}}\Vert z\Vert^{\alpha
}e^{-\mu|z|^{2}}\,dz.
\end{split}
\]
Then, by performing the change of variables $z=D_{0}(R)u$, we obtain
\begin{equation}
B_{R}\leq c([\omega]+\omega_{0})\,R^{Q+\alpha}\int_{\{\Vert u\Vert>1\}}%
e^{-\mu|D_{0}(R)u|^{2}}\Vert u\Vert^{\alpha}\,du. \label{eq:BR}%
\end{equation}
Now, since we are assuming $R>1$, we have
\[
|D_{0}(R)u|^{2}=\sum_{j=1}^{N}R^{2q_{j}}u_{j}^{2}\geq R^{2}|u|^{2}\quad
\forall\,\,u\in\RN.
\]
As a consequence, we obtain the following estimate
\begin{equation}%
\begin{split}
e^{-\mu|D_{0}(R)u|^{2}}\Vert u\Vert^{\alpha}  &  \leq e^{-\mu R^{2}|u|^{2}%
}\Vert u\Vert^{\alpha}\leq c\sum_{j=1}^{N}e^{-\mu R^{2}|u|^{2}}|u_{j}%
|^{\alpha/q_{j}}\\
&  \leq c\sum_{j=1}^{N}e^{-\mu R^{2}|u|^{2}}|u|^{\alpha/q_{j}}.
\end{split}
\label{eq:estimemumuhalf}%
\end{equation}
In view of \eqref{eq:estimemumuhalf}, and since $\{\Vert u\Vert>1\}\subseteq
\{|u|>\delta\}$ for some constant $\delta>0$ only depending on the dimension
$N$, from \eqref{eq:BR} we finally get
\begin{equation}%
\begin{split}
B_{R}  &  \leq c([\omega]+\omega_{0})\,R^{Q}\sum_{j=1}^{N}\int_{\{|u|>\delta
\}}e^{-\frac{\mu R^{2}}{2}|u|^{2}}|u|^{\alpha/q_{j}}\,du\\
&  =c([\omega]+\omega_{0})\,R^{Q}\sum_{j=1}^{N}\int_{\delta}^{\infty}%
e^{-\frac{\mu R^{2}}{2}\rho^{2}}\rho^{N+\frac{\alpha}{q_{j}}-1}\,d\rho\\
&  (\text{by the change of variables $\rho=s/R$, and since $R>1$})\\
&  \leq c([\omega]+\omega_{0})\,R^{Q}\sum_{j=1}^{N}\int_{\delta R}^{\infty
}e^{-\frac{\mu s^{2}}{2}}s^{N+\frac{\alpha}{q_{j}}-1}\,ds\\
&  \leq c([\omega]+\omega_{0})\,R^{Q}\int_{\delta R}^{\infty}e^{-\frac{\mu
s^{2}}{4}}\,ds\\
&  \leq c([\omega]+\omega_{0})\,R^{Q}e^{-\frac{\mu\delta^{2}R^{2}}{4}}\leq
c([\omega]+\omega_{0})\,e^{-{\kappa R^{2}}},
\end{split}
\label{eq:BRfinal}%
\end{equation}
where $c>0$ is a suitable constant, possibly different from line to line but
only depending on $\mu,N$ and $\alpha$, and $\kappa=\mu\,\delta^{2}/8$.
\vspace*{0.1cm} Gathering \eqref{A_R}-\eqref{eq:BRfinal}, and choosing
$R=1/\sqrt{r}>1$, we then conclude that
\[
U^{\mu}(\omega)(r)\leq A_{R}+B_{R}\leq c\Big(\int_{0}^{\sqrt{r}}\frac
{\omega(s)}{s}\,ds+([\omega]+\omega_{0})e^{-\frac{\kappa}{r}}\Big),
\]
where $c,\kappa>0$ are constants only depending on $\mu$ and $N$.
\vspace*{0.1cm}

\textsc{Case II:} $r\geq1$. Since $rR\geq R>1$, by combining
\eqref{eq:estimIntOmegafinoarDaUsare} with estimate \eqref{A_R} (which
actually holds for every $r>0$) we obtain
\begin{equation}%
\begin{split}
A_{R}  &  \leq c\int_{0}^{rR}\frac{\omega(s)}{s}\,ds\leq c([\omega]+\omega
_{0})(1+(rR)^{\alpha})\\
&  \leq c(rR)^{\alpha}([\omega]+\omega_{0}).
\end{split}
\label{A_RCaseII}%
\end{equation}
Next, since in $B_{R}$ we have $r\Vert z\Vert>rR>1$, again by
\eqref{eq:estimIntOmegafinoarDaUsare} we get
\[
\int_{0}^{r\Vert z\Vert}\frac{\omega(s)}{s}\,ds\leq c([\omega]+\omega
_{0})\big(1+(r\Vert z\Vert)^{\alpha}\big)\leq c(r\Vert z\Vert)^{\alpha
}([\omega]+\omega_{0});
\]
from this, since $\mu>0$ and $\Vert z\Vert=\sum_{j}|z_{j}|^{1/q_{j}}$, we
obtain
\begin{equation}
B_{R}\leq cr^{\alpha}([\omega]+\omega_{0})\int_{\RN}e^{-\mu|z|^{2}}\Vert
z\Vert^{\alpha}\,dz=cr^{\alpha}([\omega]+\omega_{0}), \label{B_RCaseII}%
\end{equation}
where $c>0$ is a constant depending on $\mu$ and $\alpha$. \vspace*{0.1cm}
Gathering \eqref{A_RCaseII}-\eqref{eq:BRfinal}, and choosing $R=2$, we then
conclude that
\[
U^{\mu}(\omega)(r)\leq A_{R}+B_{R}\leq cr^{\alpha}([\omega]+\omega_{0}),
\]
where $c>0$ are constants only depending on $\mu$ and $\alpha$. \medskip

(ii)\thinspace\thinspace If \eqref{eq:propertyP2strong} holds, by definition
of $U^{\mu}(\omega)$ we have
\begin{align*}
U^{\mu}(\omega)(r)  &  \leq\omega_{0}\int_{\RN}e^{-\mu|z|^{2}}\Big(\int%
_{0}^{r\Vert z\Vert}s^{\alpha-1}\,ds\Big)dz\\
&  =\frac{\omega_{0}\,r^{\alpha}}{\alpha}\int_{\RN}e^{-\mu|z|^{2}}\Vert
z\Vert^{\alpha}\,dz\equiv c_{\mu,\alpha}\cdot\omega_{0}\,r^{\alpha}%
\quad\forall r>0.
\end{align*}
This ends the proof.
\end{proof}

\medskip

Thanks to Lemmas \ref{lem:Ufgeneral}-\ref{prop:stimaUf}, we readily obtain the
following results.

\begin{lemma}
\label{lem:Ufwelldef} Let $\Omega\subseteq\mathbb{R}^{N+1}$ be an arbitrary
open set, and let $f\in\mathcal{D}(\Omega)$. For a given $\mu>0$, let
$\mathcal{U}_{f,\Omega}^{\mu}$ be as in \eqref{eq:defUmuIntro}. Then, the
following facts hold.

\begin{itemize}
\item[\emph{(i)}] for every $\alpha\in(0,1)$ there exists a constant $c>0$
only depending on $\mu$ and $\alpha$, such that
\[
0\leq\mathcal{U}_{f,\Omega}^{\mu}(r)\leq c(1+r^{\alpha})\Vert f\Vert
_{\mathcal{D}(\Omega)}\quad\forall\,\,r>0;
\]

\item[\emph{(ii)}] $\mathcal{U}^{\mu}_{f,\Omega}(r)\to0$ as $r\to0^{+}$.
\end{itemize}

If, in addition, $f\in\mathcal{D}_{\log}(\Omega)$, then the function
$\mathcal{V}_{f,\Omega}^{\mu}$ defined in \eqref{eq:defVmuIntro} satisfies the
following properties, analogous to \emph{(i)-(ii)} above:

\begin{itemize}
\item[\emph{(i)'}] for every $\alpha\in(0,1)$ there exists a constant $c>0$
only depending on $\mu$ and $\alpha$, such that
\[
0\leq\mathcal{V}_{f,\Omega}^{\mu}(r)\leq c(1+r^{\alpha})\Big(\int_{0}^{1}%
\frac{\omega_{f,\Omega}(s)}{s}(1+|\log(s)|)ds+\Vert f\Vert_{L^{\infty}%
(\Omega)}\Big),
\]

\item[\emph{(ii)'}] $\mathcal{V}_{f,\Omega}^{\mu}(r)\rightarrow0$ as
$r\rightarrow0^{+}$.
\end{itemize}
\end{lemma}

\begin{proof}
If $f\in\mathcal{D}(\Omega)$, both the properties (i) and (ii) of
$\mathcal{U}^{\mu}_{f,\Omega}$ immediately follow from Lemma
\ref{lem:Ufgeneral}, taking into account that
\[
\mathcal{U}^{\mu}_{f,\Omega} = U^{\mu}(\omega_{f,\Omega})\quad\text{and}%
\quad[\omega_{f,\Omega}]+\omega_{0} = \int_{0}^{1}\frac{\omega_{f,\Omega}%
(s)}{s}\,ds+\omega_{0} \leq3\|f\|_{\mathcal{D}(\Omega)},
\]
see the the \emph{incipit} of the proof of Proposition \ref{prop:funMN}. If,
in addition, $f\in\mathcal{D}_{\log}(\Omega)$, from Proposition
\ref{prop:funMN} we know that $\mathcal{M}_{f,\Omega}$ is a \emph{Dini
con\-ti\-nuity modulus} of exponent $\alpha$, for every $\alpha\in(0,1)$; more
precisely,
\begin{align*}
&  \text{(1)\,\,$\mathcal{M}_{f,\Omega}\leq\omega_{0}^{\prime\alpha}$ for all
$r\geq1$, where $\omega_{0}^{\prime}= c\|f\|_{\mathcal{D}(\Omega)}$};\\
&  \text{$(2)$}\,\,[\mathcal{M}_{f,\Omega}] = \int_{0}^{1}\frac{\mathcal{M}%
_{f,\Omega}(s)}{s}\,ds \leq c\Big(\int_{0}^{1}\frac{\omega_{f,\Omega}(s)}%
{s}(1+|\log(s)|)ds+\|f\|_{L^{\infty}(\Omega)}\Big),
\end{align*}
where $c > 0$ is a constant only depending on the fixed $\alpha$. As a
consequence, pro\-per\-ties (i)'-(ii)' of $\mathcal{V}^{\mu}_{f,\Omega}$
follow again from Lemma \ref{lem:Ufgeneral}, since
\[
\mathcal{V}^{\mu}_{f,\Omega} = U^{\mu}(\mathcal{M}_{f,\Omega})
\]
and since, by \eqref{eq:omegazeroMfOmega}-\eqref{eq:MDinimod} in Proposition
\ref{prop:funMN}, we have
\begin{equation}
\label{eq:omegaomegaprimeMf}%
\begin{split}
[\mathcal{M}_{f,\Omega}]+\omega_{0}^{\prime}  &  \leq\int_{0}^{1}%
\frac{\mathcal{M}_{f,\Omega}(s)}{s}\,ds + {c}\|f\|_{\mathcal{D}(\Omega)}\\
&  \leq c\Big(\int_{0}^{1}\frac{\omega_{f,\Omega}(s)}{s}(1+|\log
(s)|)ds+\|f\|_{L^{\infty}(\Omega)}\Big),
\end{split}
\end{equation}
where $c > 0$ is a constant only depending on $\alpha$. This ends the proof.
\end{proof}

\begin{proposition}
\label{prop:explicitboundUVf} Let $\Omega\subseteq\mathbb{R}^{N+1}$ be an
arbitrary open set, and let $f\in\mathcal{D}(\Omega)$. Moreover, let $\mu>0$
and $\alpha\in(0,1)$ be fixed. Then, we have
\begin{align*}
&  \mathcal{U}_{f,\Omega}^{\mu}(r)\leq%
\begin{cases}
c\big(\int_{0}^{\sqrt{r}}\frac{\omega_{f,\Omega}(s)}{s}\,ds+\Vert
f\Vert_{\mathcal{D}(\Omega)}e^{-\frac{\kappa}{r}}\big) & \text{if $0<r<1$%
},\\[0.05cm]%
cr^{\alpha}\Vert f\Vert_{\mathcal{D}(\Omega)} & \text{if $r\geq1$}.
\end{cases}
\end{align*}
If in addition $f\in\mathcal{D}_{\log}(\Omega)$, we have%

\begin{align*}
&  \mathcal{V}_{f,\Omega}^{\mu}(r)\leq%
\begin{cases}
c\big(\int_{0}^{\sqrt{r}}\frac{\omega_{f,\Omega}(s)}{s}\,ds+\Vert
f\Vert_{\mathcal{D}_{\log}(\Omega)}e^{-\frac{\kappa}{r}}\big) & \text{if
$0<r<1$},\\[0.05cm]%
cr^{\alpha}\Vert f\Vert_{\mathcal{D}_{\log}(\Omega)} & \text{if $r\geq1$}.
\end{cases}
\end{align*}
where $c,\kappa>0$ only depends on $\mu,N$ and $\alpha$, and
\[
\Vert f\Vert_{\mathcal{D}_{\log}(\Omega)}:=\int_{0}^{1}\frac{\omega_{f,\Omega
}(s)}{s}(1+|\log(s)|)ds+\Vert f\Vert_{L^{\infty}(\Omega)}%
\]

\end{proposition}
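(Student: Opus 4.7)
The plan is to obtain both bounds as direct corollaries of Proposition \ref{prop:stimaUf}, applied with the appropriate choice of continuity modulus, together with the bookkeeping provided by Proposition \ref{prop:funMN}. The starting observation is that, by the very definitions \eqref{eq:defUfmu} and \eqref{eq:defVfmu},
\[
\mathcal{U}^{\mu}_{f,\Omega} = U^{\mu}(\omega_{f,\Omega}), \qquad \mathcal{V}^{\mu}_{f,\Omega} = U^{\mu}(\mathcal{M}_{f,\Omega}),
\]
with $U^{\mu}$ as in \eqref{eq:defUmuGeneral}, so everything reduces to verifying that the two moduli $\omega_{f,\Omega}$ and $\mathcal{M}_{f,\Omega}$ fit the hypotheses of Proposition \ref{prop:stimaUf} and controlling the corresponding constants $[\omega],\,\omega_0$ in terms of the norms appearing on the right-hand side of the statement.

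For the bound on $\mathcal{U}^{\mu}_{f,\Omega}$, I would fix $\alpha\in(0,1)$ and use \eqref{eq:boundwf} to note that $\omega_{f,\Omega}(r)\leq 2\Vert f\Vert_{L^\infty(\Omega)}\cdot r^{\alpha}$ for $r\geq 1$; together with $f\in\mathcal{D}(\Omega)$, this makes $\omega_{f,\Omega}$ a Dini continuity modulus of exponent $\alpha$ with $\omega_0=2\Vert f\Vert_{L^\infty(\Omega)}$ and $[\omega_{f,\Omega}]=|f|_{\mathcal{D}(\Omega)}$, hence $[\omega_{f,\Omega}]+\omega_0\leq 3\Vert f\Vert_{\mathcal{D}(\Omega)}$. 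Plugging $\omega=\omega_{f,\Omega}$ into Proposition \ref{prop:stimaUf}(i) immediately yields the desired bound for $0<r<1$. For $r\geq 1$, I would inspect Case II of the proof of Proposition \ref{prop:stimaUf}(i): the computation of $A_R$ and $B_R$ there actually produces the stronger estimate $cr^{\alpha}([\omega]+\omega_0)$ (the $cr$ in the statement is just a weaker repackaging), which gives exactly $cr^{\alpha}\Vert f\Vert_{\mathcal{D}(\Omega)}$.

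For the bound on $\mathcal{V}^{\mu}_{f,\Omega}$, since the right-hand side involves $\Vert f\Vert_{\mathcal{D}_{\log}(\Omega)}$, I would assume $f\in\mathcal{D}_{\log}(\Omega)$ (otherwise the statement is vacuous). Proposition \ref{prop:funMN} then tells me that $\mathcal{M}_{f,\Omega}$ is itself a Dini continuity modulus of exponent $\alpha$, with $\omega_0'\leq c\Vert f\Vert_{\mathcal{D}(\Omega)}$ from \eqref{eq:omegazeroMfOmega} and $[\mathcal{M}_{f,\Omega}]\leq c\Vert f\Vert_{\mathcal{D}_{\log}(\Omega)}$ from \eqref{eq:MDinimod}, so that $[\mathcal{M}_{f,\Omega}]+\omega_0'\leq c\Vert f\Vert_{\mathcal{D}_{\log}(\Omega)}$. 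Applying Proposition \ref{prop:stimaUf}(i) with $\omega=\mathcal{M}_{f,\Omega}$ then gives, for $0<r<1$,
\[
\mathcal{V}^{\mu}_{f,\Omega}(r)\leq c\Big(\int_{0}^{\sqrt{r}}\tfrac{\mathcal{M}_{f,\Omega}(s)}{s}\,ds+\Vert f\Vert_{\mathcal{D}_{\log}(\Omega)}\,e^{-\kappa/r}\Big),
\]
and the exact same Case II inspection yields $cr^{\alpha}\Vert f\Vert_{\mathcal{D}_{\log}(\Omega)}$ for $r\geq 1$.

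The only step that is not a direct invocation is rewriting the integral of $\mathcal{M}_{f,\Omega}$ above as an integral of $\omega_{f,\Omega}$, and this is the place where I expect the most care. I would redo, on the interval $(0,\sqrt{r})$ in place of $(0,1)$, the Fubini--Tonelli computation carried out in Step III of the proof of Lemma \ref{lem:generalMN} (formulas \eqref{eq:DiniM1LemmaGen}--\eqref{eq:DiniM2LemmaGen}) to obtain a bound of the type
\[
\int_{0}^{\sqrt{r}}\tfrac{\mathcal{M}_{f,\Omega}(s)}{s}\,ds\leq c\int_{0}^{\sqrt{r}}\tfrac{\omega_{f,\Omega}(s)}{s}(1+|\log s|)\,ds+c\,\Vert f\Vert_{\mathcal{D}_{\log}(\Omega)},
\]
where the second summand (which comes from the tail $\int_{\sqrt{r}}^{\infty}\omega_{f,\Omega}(s)/s^2\,ds$ appearing in $\mathcal{M}_{f,\Omega}$ and is bounded using property (P2) with $\omega_0\leq 2\Vert f\Vert_{L^\infty(\Omega)}$) is absorbed into the $\Vert f\Vert_{\mathcal{D}_{\log}(\Omega)}\,e^{-\kappa/r}$ term, up to adjusting the constant $c$. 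This matches the stated form of the bound for $\mathcal{V}^{\mu}_{f,\Omega}$; apart from this Fubini bookkeeping, the proof is essentially a formal consequence of Propositions \ref{prop:funMN} and \ref{prop:stimaUf}.
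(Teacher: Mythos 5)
Your reduction of both bounds to Proposition \ref{prop:stimaUf}, applied with $\omega=\omega_{f,\Omega}$ and $\omega=\mathcal{M}_{f,\Omega}$ and with $[\omega]+\omega_0$ controlled via \eqref{eq:boundwf} and Proposition \ref{prop:funMN}, is exactly what the paper does. Your treatment of the $\mathcal{U}^\mu_{f,\Omega}$ bound and of the $r\geq1$ case is correct, including the observation that Case II of the proof of Proposition \ref{prop:stimaUf} actually produces $cr^\alpha$ rather than the $cr$ stated there.

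The gap is in the $0<r<1$ estimate for $\mathcal{V}^{\mu}_{f,\Omega}$. What Proposition \ref{prop:stimaUf}(i) applied to $\mathcal{M}_{f,\Omega}$ actually gives is
\[
\mathcal{V}^{\mu}_{f,\Omega}(r)\leq c\Big(\int_{0}^{\sqrt{r}}\frac{\mathcal{M}_{f,\Omega}(s)}{s}\,ds+\Vert f\Vert_{\mathcal{D}_{\log}(\Omega)}\,e^{-\kappa/r}\Big),
\]
with $\mathcal{M}_{f,\Omega}$ rather than $\omega_{f,\Omega}$ in the integrand, and your Fubini attempt to pass from one to the other does not work. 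The Fubini splitting produces, among its terms, $M_2(\sqrt r)=\sqrt{r}\int_{\sqrt r}^{\infty}\omega_{f,\Omega}(u)u^{-2}\,du$; since $\omega_{f,\Omega}$ is non-decreasing, $M_2(\sqrt r)\geq\omega_{f,\Omega}(\sqrt r)$, which for a generic Dini modulus decays only polylogarithmically and therefore cannot be ``absorbed into'' the super-exponentially small $\Vert f\Vert_{\mathcal{D}_{\log}(\Omega)}e^{-\kappa/r}$, contrary to your claim. Even before that absorption step you are left with the factor $(1+|\log s|)$ inside the integral, which the stated bound does not allow. Concretely, with $\omega_{f,\Omega}(s)\sim|\log s|^{-3}$ one finds $\int_0^{\sqrt r}\mathcal{M}_{f,\Omega}(s)s^{-1}\,ds\sim|\log r|^{-1}$ while $\int_0^{\sqrt r}\omega_{f,\Omega}(s)s^{-1}\,ds\sim|\log r|^{-2}$, so the two sides of the stated inequality have incompatible orders as $r\to0^+$ and the replacement is genuinely impossible. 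The paper's own proof is a one-line invocation of Proposition \ref{prop:stimaUf}, which yields precisely the $\mathcal{M}_{f,\Omega}$ version in the display above; the $\omega_{f,\Omega}$ appearing in the stated $\mathcal{V}$-bound is almost certainly a misprint, and the clean fix is to keep $\mathcal{M}_{f,\Omega}$ there rather than attempting the replacement.
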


\begin{proof}
This is an immediate consequence of Proposition \ref{prop:stimaUf}, taking
into account the following identities (see \eqref{eq:boundwf} and Proposition
\ref{prop:funMN})
\[
\lbrack\omega]+\omega_{0}\leq c\cdot%
\begin{cases}
\Vert f\Vert_{\mathcal{D}(\Omega)}, & \text{if $\omega=\omega_{f,\Omega}$};\\
\Vert f\Vert_{\mathcal{D}_{\log}(\Omega)}, & \text{if $\omega=\mathcal{M}%
_{f,\Omega}$}.
\end{cases}
\]
This ends the proof.
\end{proof}

We conclude this part of the section with a couple of technical lemmas, which
will be repeatedly used in the sequel.

\begin{lemma}
\label{lem:omegasuppcpt} Let $f\in\mathcal{D}(\Omega)$, and let $\xi\in
\Omega,\,r > 0$ be such that $B = B_{r}(\xi)\Subset\Omega$. We assume that
$f\equiv0$ in $\Omega\setminus B$ \emph{(}i.e., $\mathrm{supp}(f)\subseteq
\overline{B}$\emph{)}. Then,
\[
\omega_{f,\Omega}(r) = \omega_{f,\overline{B}}(r)\quad\forall\,\,r > 0.
\]

\end{lemma}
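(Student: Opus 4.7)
The plan is to prove both inequalities in the claimed equality. The direction $\omega_{f,\overline{B}}(\rho) \leq \omega_{f,\Omega}(\rho)$ is immediate, since the supremum defining $\omega_{f,\overline{B}}$ is taken over the smaller set $\overline{B} \subseteq \Omega$.

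For the opposite inequality I will fix $\rho > 0$, pick an arbitrary admissible pair $(x,t),(y,t) \in \Omega$ with $\|x-y\| \leq \rho$, and bound $|f(x,t)-f(y,t)|$ by $\omega_{f,\overline{B}}(\rho)$. The argument splits according to membership in $\overline{B}$: \emph{(1)} if both points lie in $\overline{B}$, the estimate is immediate from the definition; \emph{(2)} if neither lies in $\overline{B}$, then both belong to $\Omega\setminus B$, so $f$ vanishes at both and the difference is zero; \emph{(3)} the essential case, after relabeling, is $(y,t) \in B$ and $(x,t) \notin \overline{B}$.

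To handle case \emph{(3)}, I plan to interpolate along the spatial segment $z(s) := (1-s)y + s x$, $s \in [0,1]$, at fixed time $t$, and study the scalar function $\varphi(s) := d((z(s),t),\xi)$. By the explicit formula for $d$ at equal time coordinates, $\varphi$ is continuous in $s$, and one has $\varphi(0) < r$ while $\varphi(1) > r$. The intermediate value theorem then yields some $s^{*} \in (0,1)$ with $(z(s^{*}),t) \in \partial B$. Because $B \Subset \Omega$ and $B$ is open, this intermediate point lies in $\overline{B} \setminus B \subseteq \Omega \setminus B$, so the hypothesis forces $f(z(s^{*}),t) = 0$. A direct computation from the definition of the homogeneous norm gives
\[
\|z(s) - y\| \;=\; \sum_{i=1}^{N} s^{1/q_i}\,|x_i - y_i|^{1/q_i},
\]
which is monotonically increasing in $s \in [0,1]$, with maximum value $\|x-y\| \leq \rho$ attained at $s=1$. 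Since $(y,t)$ and $(z(s^{*}),t)$ both lie in $\overline{B}$ at the same time $t$, I finish with
\[
|f(x,t)-f(y,t)| \;=\; |f(y,t) - f(z(s^{*}),t)| \;\leq\; \omega_{f,\overline{B}}(\|y - z(s^{*})\|) \;\leq\; \omega_{f,\overline{B}}(\rho).
\]

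The only real obstacle is case \emph{(3)}: one must guarantee simultaneously that the IVT-produced point actually lies inside $\Omega$ (so the vanishing hypothesis on $f$ applies) and has spatial distance at most $\|x-y\|$ from $y$ despite the non-Euclidean exponents $1/q_i$ in the norm. Both facts follow cleanly, the first from the compact inclusion $B \Subset \Omega$ and the openness of $B$, the second from the monotonicity of $s \mapsto s^{1/q_i}$ on $[0,1]$; no further subtlety is expected.
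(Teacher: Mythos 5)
Your argument follows essentially the same route as the paper's: split into three cases by membership in the ball, and in the mixed case walk along the spatial segment at fixed time, locate a boundary point by the intermediate value theorem (where $f$ vanishes because $B\Subset\Omega$ and $\mathrm{supp}(f)\subseteq\overline{B}$), and exploit the monotonicity of $s\mapsto\sum_i s^{1/q_i}|x_i-y_i|^{1/q_i}$ on $[0,1]$ so that the homogeneous distance to the interior endpoint stays below $\|x-y\|$. That is exactly the paper's case (c), and your computation of $\|z(s)-y\|$ is correct. One small slip: your trichotomy is not exhaustive. You split on membership in $\overline{B}$ in cases (1) and (2) but then require $(y,t)\in B$ (the open ball) in case (3), so the configuration $(y,t)\in\overline{B}\setminus B$, $(x,t)\notin\overline{B}$ is not covered as stated. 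It is handled by the same observation as your case (2) — both points are then in $\Omega\setminus B$, where $f$ vanishes — and the cleanest fix (used in the paper) is to split on membership in $B$ rather than $\overline{B}$: both in $B$, neither in $B$, exactly one in $B$. With that adjustment the proof is complete.
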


\begin{proof}
First of all, since $B\Subset\Omega$ we have $\omega_{f,\Omega} \geq
\omega_{f,B}$ on $(0,\infty)$. To prove the reverse inequality, we fix $r > 0$
and we let $(x,t),(y,t)\in\Omega$ be such that
\[
d((x,t),(y,t)) = \|x-y\|\leq r.
\]
We then distinguish three cases.

\begin{itemize}
\item[(a)] $(x,t),(y,t)\in B$. In this case, by definition of $\omega_{f,B}$
we have
\begin{equation}
\label{eq:omegafcasea}|f(x,t)-f(y,t)|\leq\sup
_{\begin{subarray}{c} (z_1,s),(z_2,s)\in B \\ \|z_1-z_2\| \leq r \end{subarray}}%
|f(z_{1},s)-f(z_{2},s)| = \omega_{f,\overline{B}}(r).
\end{equation}

\item[(b)] $(x,t),(y,t)\notin B$. In this case, since $f\equiv0$ out of $B$,
we have
\begin{equation}
\label{eq:omegafcaseb}|f(x,t)-f(y,t)| = 0 \leq\omega_{f,\overline{B}}(r).
\end{equation}

\item[(c)] $(x,t)\in B,\,(y,t)\notin B$. In this last case, we consider the
segment
\[
\gamma(\tau)=(x+\tau(y-x),t)\qquad(\tau\in\lbrack0,1])
\]
and we first observe that, for every $0\leq\tau\leq1$, we have:
\[
d((x,t),\gamma(\tau))=\Vert\tau(y-x)\Vert=\sum_{j=1}^{N}\tau^{1/q_{j}}%
|x_{j}-y_{j}|\leq\tau^{1/q_{N}}\Vert x-y\Vert\leq r.
\]
On the other hand, since $\gamma(0)\in B$ and $\gamma(1)\notin B$, there
exists $\tau^{\ast}\in(0,1]$ such that $\gamma(\tau^{\ast})\in\partial B$.
Thus, since $f\equiv0$ in $\Omega\setminus B\supset\partial B$, we have
\begin{equation}%
\begin{split}
|f(x,t)-f(y,t)|  &  =|f(x,t)|=|f(x,t)-f(\gamma(\tau^{\ast}))|\\
&  \leq\sup
_{\begin{subarray}{c} (z_1,s),(z_2,s)\in \overline{B} \\ \|z_1-z_2\| \leq r \end{subarray}}%
|f(z_{1},s)-f(z_{2},s)|=\omega_{f,\overline{B}}(r).
\end{split}
\label{eq:omegafcasec}%
\end{equation}

\end{itemize}

Gathering \eqref{eq:omegafcasea}-to-\eqref{eq:omegafcasec}, we then conclude
that
\begin{align*}
\omega_{f,\Omega}(r) =\sup
_{\begin{subarray}{c} (x,t),(y,t)\in\Omega \\ d((x,t),(y,t)) \leq r \end{subarray}}%
|f(x,t)-f(y,t)| \leq\omega_{f,\overline{B}}(r),
\end{align*}
and the proof is complete.
\end{proof}

\begin{lemma}
\label{lem:doublingomega} There exists a structural constant $c>0$ such that,
for every $f\in\mathcal{D}(\Omega)$ every $r>0$ and every $\gamma>0$, the
following estimates hold true:
\begin{align}
\int_{\{\eta\in\mathbb{R}^{N+1}:\,d(\xi,\eta)>r\}}\frac{\omega_{f,\Omega
}(\gamma\,d(\xi,\eta))}{d(\xi,\eta)^{Q+3}}\,d\eta &  \leq c\int_{2r}^{\infty
}\frac{\omega_{f,\Omega}(\gamma s)}{s^{2}}\,ds\label{Lemma A}\\
\int_{\{\eta\in\mathbb{R}^{N+1}:\,d(\xi,\eta)<r\}}\frac{\omega_{f,\Omega
}(\gamma\,d(\xi,\eta))}{d(\xi,\eta)^{Q+2}}\,d\eta &  \leq c\int_{0}^{2r}%
\frac{\omega_{f,\Omega}(\gamma s)}{s}\,ds \label{Lemma B}%
\end{align}
Here, $Q\geq1$ is as in \eqref{eq:defQhomdim}, and $c$ is independent of both
$f$ and $r$.
\end{lemma}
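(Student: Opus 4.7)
The plan is to prove both estimates by a standard dyadic decomposition of the region of integration, exploiting three ingredients already at hand: the monotonicity of $\omega_{f,\Omega}$, the explicit volume formula \eqref{measure ball} for $d$-balls, and the quasi-symmetry \eqref{eq:quasisymd} of $d$. The resulting constant $c$ will depend only on $Q$ and $\bd{\kappa}$, and in particular will be independent of $f$, $r$ and $\gamma$.

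For \eqref{Lemma A}, I would write
\[
\{\eta\in\mathbb{R}^{N+1}:d(\xi,\eta)>r\}=\bigsqcup_{k=0}^{\infty}A_{k},\qquad A_{k}:=\{\eta:2^{k}r<d(\xi,\eta)\leq 2^{k+1}r\}.
\]
By \eqref{eq:quasisymd} we have $A_{k}\subseteq\{\eta:d(\eta,\xi)\leq\bd{\kappa}\,2^{k+1}r\}=B_{\bd{\kappa}\,2^{k+1}r}(\xi)$, whose measure is bounded by $c(2^{k+1}r)^{Q+2}$ thanks to \eqref{measure ball}. On $A_{k}$ the monotonicity of $\omega_{f,\Omega}$ gives $\omega_{f,\Omega}(\gamma\,d(\xi,\eta))\leq\omega_{f,\Omega}(\gamma\,2^{k+1}r)$, while $d(\xi,\eta)^{-(Q+3)}\leq(2^{k}r)^{-(Q+3)}$. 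Combining these three estimates yields
\[
\int_{A_{k}}\frac{\omega_{f,\Omega}(\gamma\,d(\xi,\eta))}{d(\xi,\eta)^{Q+3}}\,d\eta\leq c\,\frac{\omega_{f,\Omega}(\gamma\,2^{k+1}r)}{2^{k}r}.
\]
Using monotonicity once more, the right-hand side is comparable to the lower Riemann sum $c\int_{2^{k+1}r}^{2^{k+2}r}\frac{\omega_{f,\Omega}(\gamma s)}{s^{2}}\,ds$, since on this interval $s$ and $2^{k+1}r$ differ by at most a factor of $2$. Summing over $k\geq 0$ telescopes to $c\int_{2r}^{\infty}\frac{\omega_{f,\Omega}(\gamma s)}{s^{2}}\,ds$, which is \eqref{Lemma A}.

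Estimate \eqref{Lemma B} is entirely analogous, but the dyadic decomposition runs inward: I set $A_{k}':=\{\eta:2^{-k-1}r<d(\xi,\eta)\leq 2^{-k}r\}$ for $k\geq 0$, so that $\{0<d(\xi,\eta)<r\}=\bigsqcup_{k}A_{k}'$. The same three-ingredient argument (with $d(\xi,\eta)^{-(Q+2)}\leq(2^{-k-1}r)^{-(Q+2)}$ and $|A_{k}'|\leq c(2^{-k}r)^{Q+2}$) produces
\[
\int_{A_{k}'}\frac{\omega_{f,\Omega}(\gamma\,d(\xi,\eta))}{d(\xi,\eta)^{Q+2}}\,d\eta\leq c\,\omega_{f,\Omega}(\gamma\,2^{-k}r),
\]
which is comparable to $c\int_{2^{-k-1}r}^{2^{-k}r}\frac{\omega_{f,\Omega}(\gamma s)}{s}\,ds$; summing gives \eqref{Lemma B}.

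There is no real obstacle here: everything reduces to routine bookkeeping of powers of $2$, and the fact that the exponents $Q+2$ and $Q+3$ are precisely tuned so that the measure gain $r^{Q+2}$ cancels the singularity, leaving one extra factor of $s$ or $s^{2}$ in the denominator that matches the target integrals. The only point requiring mild care is the quasi-symmetry of $d$, which forces the enclosing ball to have radius $\bd{\kappa}\,2^{k+1}r$ rather than $2^{k+1}r$; this merely multiplies the final constant by $\bd{\kappa}^{Q+2}$, which is absorbed into $c$.
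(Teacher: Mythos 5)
Your decomposition, volume bound, and use of monotonicity and quasi-symmetry are exactly the ingredients the paper uses, and your argument for \eqref{Lemma A} is correct and matches the paper's. But there is a real slip in the final comparison step for \eqref{Lemma B}: you claim that $\omega_{f,\Omega}(\gamma\,2^{-k}r)$ is comparable to $\int_{2^{-k-1}r}^{2^{-k}r}\frac{\omega_{f,\Omega}(\gamma s)}{s}\,ds$, but the interval $[2^{-k-1}r,\,2^{-k}r]$ lies \emph{below} the evaluation point $2^{-k}r$, so monotonicity gives the wrong direction: it shows the integral is bounded \emph{above} by $\omega_{f,\Omega}(\gamma\,2^{-k}r)\log 2$, whereas what you need is a lower bound. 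The lower bound one gets from monotonicity is only $\omega_{f,\Omega}(\gamma\,2^{-k-1}r)\log 2$, which is not comparable to $\omega_{f,\Omega}(\gamma\,2^{-k}r)$ for a general Dini modulus (no doubling is assumed; take $\omega$ to jump sharply between the two scales). Note this issue does not arise in your \eqref{Lemma A} argument, where the interval $[2^{k+1}r,\,2^{k+2}r]$ lies \emph{above} the evaluation point $2^{k+1}r$, so the inequality runs the right way.

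The fix is the one the paper uses: compare $\omega_{f,\Omega}(\gamma\,2^{-k}r)$ with the dyadic interval one level up, namely
\[
\int_{2^{-k}r}^{2^{-k+1}r}\frac{\omega_{f,\Omega}(\gamma s)}{s}\,ds \geq \omega_{f,\Omega}(\gamma\,2^{-k}r)\log 2,
\]
valid because $s\geq 2^{-k}r$ on this interval. Summing over $k\geq 0$ then produces $\int_{0}^{2r}\frac{\omega_{f,\Omega}(\gamma s)}{s}\,ds$, which is precisely the stated target and explains the appearance of $2r$ (rather than $r$) in the upper limit of the integral in \eqref{Lemma B}.
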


\begin{proof}
The proof of both \eqref{Lemma A}-\eqref{Lemma B} is based on the fact that,
since $f\in\mathcal{D}(\Omega)$, the function $\omega_{f,\Omega}$ is
non-negative, non-decreasing and satisfies \eqref{eq:defDinirecall}; moreover,
we exploit the fact that $|B_{r}(\xi)|=\omega_{Q}\,r^{Q+2}$, see
\eqref{measure ball}. \vspace*{0.1cm}

(I)\thinspace\thinspace Proof of \eqref{Lemma A}. Taking into account
\eqref{eq:quasisymd}, we have
\begin{equation}%
\begin{split}
&  \int_{\{\eta\in\mathbb{R}^{N+1}:\,d(\xi,\eta)>r\}}\frac{\omega_{f,\Omega
}(\gamma\,d(\xi,\eta))}{d(\xi,\eta)^{Q+3}}\,d\eta\\
&  =\sum_{k=0}^{\infty}\int_{\{\eta\in\mathbb{R}^{N+1}:\,2^{k}r\leq d(\xi
,\eta)<2^{k+1}r\}}\frac{\omega_{f,\Omega}(\gamma\,d(\xi,\eta))}{d(\xi
,\eta)^{Q+3}}\,d\eta\\
&  \leq\sum_{k=0}^{\infty}\frac{\omega_{f,\Omega}(2^{k+1}\gamma r)}%
{(2^{k}r)^{Q+3}}\cdot\big|\{\eta\in\mathbb{R}^{N+1}:\,2^{k}r\leq d(\xi
,\eta)<2^{k+1}r\}\big|\\
&  \leq\sum_{k=0}^{\infty}\frac{\omega_{f,\Omega}(2^{k+1}\gamma r)}%
{(2^{k}r)^{Q+3}}\cdot|B_{2^{k+1}\bd\kappa r}(\xi)|\\
&  =\omega_{Q}(2\bd\kappa)^{Q+2}\sum_{k=0}^{\infty}\frac{\omega_{f,\Omega
}(2^{k+1}\gamma r)}{2^{k}r}.
\end{split}
\label{LemmaA1}%
\end{equation}
On the other hand, since $\omega_{f,\Omega}$ is non-decreasing we have
\begin{equation}%
\begin{split}
\int_{2r}^{\infty}\frac{\omega_{f,\Omega}(\gamma s)}{s^{2}}\,ds  &
=\sum_{k=0}^{\infty}\int_{2^{k+1}r}^{2^{k+2}r}\frac{\omega_{f,\Omega}(\gamma
s)}{s^{2}}\,ds\\
&  \geq\sum_{k=0}^{\infty}\frac{\omega_{f,\Omega}(2^{k+1}\gamma r)}%
{(2^{k+2}r)^{2}}\cdot2^{k+1}r=\frac{1}{8}\sum_{k=0}^{\infty}\frac
{\omega_{f,\Omega}(2^{k+1}\gamma r)}{2^{k}r}.
\end{split}
\label{LemmaA2}%
\end{equation}
By combining \eqref{LemmaA1}-\eqref{LemmaA2}, we immediately get
\eqref{Lemma A}. \vspace*{0.1cm}

(II)\thinspace\thinspace Proof of \eqref{Lemma B}. Using once again
\eqref{eq:quasisymd}, we have
\begin{equation}%
\begin{split}
&  \int_{\{\eta\in\mathbb{R}^{N+1}:\,d(\xi,\eta)<r\}}\frac{\omega_{f,\Omega
}(\gamma\,d(\xi,\eta))}{d(\xi,\eta)^{Q+2}}\,d\eta\\
&  \qquad=\sum_{k=0}^{\infty}\int_{\{\eta\in\mathbb{R}^{N+1}:\,r/2^{k+1}%
<d(\xi,\eta)\leq r/2^{k}\}}\frac{\omega_{f,\Omega}(\gamma\,d(\xi,\eta))}%
{d(\xi,\eta)^{Q+2}}\,d\eta\\
&  \qquad\leq\sum_{k=0}^{\infty}\frac{\omega_{f,\Omega}(\gamma r/2^{k}%
)}{(r/2^{k+1})^{Q+2}}\cdot\big|\{\eta\in\mathbb{R}^{N+1}:\,r/2^{k+1}%
<d(\xi,\eta)\leq r/2^{k}\}\big|\\
&  \qquad\leq\sum_{k=0}^{\infty}\frac{\omega_{f,\Omega}(\gamma r/2^{k}%
)}{(r/2^{k+1})^{Q+2}}\cdot|B_{\bd\kappa r/2^{k}}(\xi)|\\
&  \qquad=\omega_{Q}(2\bd\kappa)^{Q+2}\sum_{k=0}^{\infty}\omega_{f,\Omega
}(\gamma\,r/2^{k}).
\end{split}
\label{LemmaB1}%
\end{equation}
On the other hand, since $\omega_{f,\Omega}$ is non-decreasing we have
\begin{equation}%
\begin{split}
\int_{0}^{2r}\frac{\omega_{f,\Omega}(s)}{s}\,ds  &  =\sum_{k=0}^{\infty}%
\int_{r/2^{k}}^{r/2^{k-1}}\frac{\omega_{f,\Omega}(s)}{s}\,ds\\
&  \geq\sum_{k=0}^{\infty}\frac{\omega_{f,\Omega}(\gamma r/2^{k})}{r/2^{k-1}%
}\cdot\frac{r}{2^{k}}=\frac{1}{2}\sum_{k=0}^{\infty}\omega_{f,\Omega}(\gamma
r/2^{k}).
\end{split}
\label{LemmaB2}%
\end{equation}
By combining \eqref{LemmaB1}-\eqref{LemmaB2}, we immediately get \eqref{Lemma B}.
\end{proof}

\subsection{Fundamental solution and representation formulas for the operator
with coefficients only depending on $t$}

In this section we collect some results established in \cite{BB, BP}
concerning the KFP o\-pe\-ra\-tors $\mathcal{L}$ with \emph{coefficients
$a_{ij}$ only depending on $t$}, that is,
\begin{equation}
\mathcal{L}u=\sum_{i,j=1}^{m_{0}}a_{ij}(t)\partial_{x_{i}x_{j}}^{2}%
u+\sum_{k,j=1}^{N}b_{jk}x_{k}\partial_{x_{j}}u-\partial_{t}u.
\label{eq:LLsolot}%
\end{equation}
Throughout what follows, we tacitly understand that $\LL$ satisfies the
structural assumptions (H1)-(H2) stated in the Introduction. \vspace*{0.1cm}

We begin by stating a result proved in \cite{BP}, which provides an explicit
expression for the global fundamental solution (heat kernel) of $\mathcal{L}$.

\begin{theorem}
[Fundamental solution for operators as in \eqref{eq:LLsolot}]%
\label{Thm fund sol coeff t dip} Let $C(t,s)$ be the $N\times N$ matrix
defined as follows:
\begin{equation}
C(t,s)=\int_{s}^{t}E(t-\sigma)\cdot%
\begin{pmatrix}
A_{0}(\sigma) & 0\\
0 & 0
\end{pmatrix}
\cdot E(t-\sigma)^{T}\,d\sigma\quad(\text{with $t>s$}) \label{eq-EC}%
\end{equation}
\emph{(}we recall that $E(\sigma)=\exp(-\sigma B)$, see \eqref{B}\emph{)}.
Then, $C(t,s)$ is \emph{sym\-me\-tric and positive definite} for every $t>s$.
Moreover, if we define
\begin{equation}%
\begin{split}
&  \Gamma(x,t;y,s)\\
&  =\frac{1}{(4\pi)^{N/2}\sqrt{\det C(t,s)}}e^{-\frac{1}{4}\langle
C(t,s)^{-1}(x-E(t-s)y),\,x-E(t-s)y\rangle}\cdot\mathbf{1}_{\{t>s\}}%
\end{split}
\label{eq.exprGammapernoi}%
\end{equation}
\emph{(}where $\mathbf{1}_{A}$ denotes the indicator function of a set
$A$\emph{)}, then $\Gamma$ enjoys the following properties, so that $\Gamma$
is the \emph{fundamental solution} for $\mathcal{L}$ with pole at $(y,s)$.

\begin{enumerate}
\item In the open set $\mathcal{O}:=\{(x,t;y,s)\in\mathbb{R}^{2N+2}%
:\,(x,t)\neq(y,s)\}$, the function $\Gamma$ is \emph{jointly continuous} in
$(x,t;y,s)$ and $C^{\infty}$ with respect to $x,y$. Mo\-re\-o\-ver, for every
multi-indices $\alpha,\beta$ the functions
\[
\partial_{x}^{\alpha}\partial_{y}^{\beta}\Gamma=\frac{\partial^{\alpha+\beta
}\Gamma}{\partial x^{\alpha}\partial y^{\beta}}%
\]
are \emph{jointly continuous} in $(x,t;y,s)\in\mathcal{O}$. Finally, $\Gamma$
and $\partial_{x}^{\alpha}\partial_{y}^{\beta}\Gamma$ are \emph{Lip\-schitz
continuous} with respect to $t,s$ in any region $\mathcal{R}$ of the form
\[
\mathcal{R}=\{(x,t;y,s)\in\mathbb{R}^{2N+2}:\,H\leq s+\delta\leq t\leq K\},
\]
where $H,K\in\mathbb{R}$ and $\delta>0$ are arbitrarily fixed.

\item For every fixed $y\in\mathbb{R}^{N}$ and $t>s$, we have
\[
\lim_{|x|\rightarrow+\infty}\Gamma(x,t;y,s)=0.
\]

\item For every fixed $(y,s)\in\mathbb{R}^{N+1}$, we have
\[
\mathcal{L}\Gamma(\cdot;y,s)(x,t)=0\qquad\text{for every $x\in\mathbb{R}^{N}$
and a.e.\thinspace$t$}.
\]

\item For every $x\in\mathbb{R}^{N}$ and $t>s$, we have
\begin{equation}
\int_{\mathbb{R}^{N}}\Gamma(x,t;y,s)\,dy=1. \label{eq:integralGamma1}%
\end{equation}

\item For every $f\in C(\mathbb{R}^{N})\cap L^{\infty}(\mathbb{R}^{N})$ and
$s\in\mathbb{R}$, the function
\[
u(x,t)=\int_{\mathbb{R}^{N}}\Gamma(x,t;y,s)f(y)\,dy
\]
is the unique solution to the Cauchy problem
\begin{equation}%
\begin{cases}
\mathcal{L}u=0 & \text{in $\mathbb{R}^{N}\times(s,\infty)$}\\
u(\cdot,s)=f &
\end{cases}
\label{eq:pbCauchyThmBP}%
\end{equation}
In particular, $u(\cdot,s)\rightarrow f$ uniformly in $\mathbb{R}^{N}$ as
$t\rightarrow s^{+}$.
\end{enumerate}

Finally, the function $\Gamma^{\ast}(x,t;y,s):=\Gamma(y,s;x,t)$ satisfies dual
properties of \emph{(2)-(4)} with respect to the \emph{formal adjoint} of
$\mathcal{L}$, that is,
\[
\textstyle\mathcal{L}^{\ast}=\sum_{i,j=1}^{m_{0}}a_{ij}(s)\partial_{y_{i}%
y_{j}}-\sum_{k,j=1}^{N}b_{jk}y_{k}\partial_{y_{i}}+\partial_{s},
\]
and thus $\Gamma^{\ast}$ is the fundamental solution of $\mathcal{L}^{\ast}$.
\end{theorem}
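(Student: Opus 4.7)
The proof is a verification that the explicitly given Gaussian kernel $\Gamma$ satisfies all the stated properties. My plan breaks into five logical steps.

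First, I would establish that $C(t,s)$ is symmetric and positive definite for $t>s$. Symmetry is immediate from \eqref{eq-EC}. For positive definiteness, given $\xi \in \R^N \setminus \{0\}$, one computes
\[
\langle C(t,s)\xi, \xi\rangle \;=\; \int_s^t \bigl\langle A_0(\sigma)\,\pi_q(E(t-\sigma)^T\xi),\,\pi_q(E(t-\sigma)^T\xi)\bigr\rangle \,d\sigma \;\geq\; \nu \int_s^t \lvert \pi_q(E(t-\sigma)^T\xi)\rvert^2\,d\sigma,
\]
where $\pi_q$ denotes projection onto the first $q$ coordinates. One then shows that the integrand cannot vanish identically on $(s,t)$: this is a Kalman-type observability statement which is equivalent, under the block structure \eqref{B}, to the Hörmander rank condition satisfied by $X_1,\dots,X_q,Y$ (see Lanconelli--Polidoro). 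This step is the deep structural input on which everything else rests.

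Second, I would address the regularity item (1). Since $E(\cdot)$ is entire in $t$ and $A_0 \in L^\infty$, the map $(t,s) \mapsto C(t,s)$ is jointly continuous and locally Lipschitz with respect to $t$ and $s$; combined with the uniform positivity from Step~1 on any set of the form $\{t - s \geq \delta\}$, the inverse $C(t,s)^{-1}$ and $\sqrt{\det C(t,s)}$ inherit the same regularity. The exponent in \eqref{eq.exprGammapernoi} is a polynomial in $x,y$ with such coefficients, so the required $C^\infty$ dependence on $x,y$ and joint continuity (plus Lipschitz dependence on $t,s$ on the regions $\mathcal{R}$) follows by standard differentiation under the exponential. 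Item (2) on decay at infinity is then a direct consequence of the Gaussian factor $e^{-\tfrac14 \langle C^{-1}(x-E(t-s)y), x-E(t-s)y\rangle}$, since $C^{-1}$ is uniformly positive definite on any compact $t>s$ region and $E(t-s)y$ is fixed.

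Third, I would verify $\mathcal{L}\Gamma(\cdot;y,s) = 0$, which is the analytic heart of the theorem. The cleanest route is via the partial Fourier transform in $x$: one computes
\[
\widehat{\Gamma}(\zeta,t;y,s) \;=\; e^{-\tfrac14 \langle C(t,s)\zeta,\zeta\rangle - i\langle \zeta, E(t-s)y\rangle}
\]
and checks that $\widehat{\Gamma}$ satisfies the first-order transport-type equation obtained from $\mathcal{L}$ on the Fourier side, using the matrix ODE
\[
\partial_t C(t,s) \;=\; \operatorname{diag}(A_0(t),0) - B\,C(t,s) - C(t,s)\,B^T
\]
which follows by differentiating \eqref{eq-EC}, together with $\partial_t E(t-s) = -BE(t-s)$. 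This reduces the PDE identity to an algebraic check. For the normalization (4), the substitution $z = x - E(t-s)y$ has Jacobian $\lvert \det E(t-s)\rvert = 1$ (since $B$ is nilpotent), and the resulting centered Gaussian integral with covariance $2 C(t,s)$ evaluates to $(4\pi)^{N/2}\sqrt{\det C(t,s)}$, cancelling the normalization.

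Fourth, for the Cauchy problem (5), I would combine (3) and (4) to show that $u(x,t) = \int \Gamma(x,t;y,s) f(y)\,dy$ solves $\mathcal{L}u = 0$ (differentiation under the integral is justified by the Gaussian decay), and then argue that $\Gamma(\cdot,t;\cdot,s)$ acts as an approximate identity as $t \to s^+$. Concretely, using \eqref{eq:integralGamma1} and the concentration of the Gaussian as $\det C(t,s) \to 0$, one splits $u(x,t) - f(x) = \int \Gamma\cdot(f(y) - f(x))\,dy$ into a near-diagonal piece, handled by uniform continuity of $f$ on compact sets, and a far piece, handled by Gaussian decay; uniqueness comes from a standard energy or maximum-principle argument. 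The adjoint statement is symmetric: the same verification applied to $\Gamma^*(x,t;y,s) = \Gamma(y,s;x,t)$ against $\mathcal{L}^*$ (acting in the $(y,s)$ variables) yields the dual properties, since the ODE for $C(t,s)$ is symmetric under the corresponding swap. I expect Step~1 (positivity of $C$) and the algebraic verification inside Step~3 to be the main technical work; everything else is Gaussian calculus and dominated convergence.
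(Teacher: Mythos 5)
The paper does not prove Theorem~\ref{Thm fund sol coeff t dip}: it is quoted verbatim from Bramanti--Polidoro~\cite{BP}, and the preceding sentence in the text makes this explicit (``We begin by stating a result proved in \cite{BP}\dots''). So there is no in-paper proof for you to match, and your proposal should be evaluated as an independent reconstruction of the argument in \cite{BP}.

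As such, your outline is sound and follows the standard route for explicit Gaussian heat kernels of degenerate Kolmogorov type. The identification of Step~1 (positive definiteness of $C(t,s)$) with a Kalman-type observability / H\"ormander rank condition is exactly the structural input, and the ODE $\partial_t C(t,s)=\tilde A(t)-BC(t,s)-C(t,s)B^T$ with $\tilde A=\operatorname{diag}(A_0,0)$ is correct (one checks it by differentiating under the integral sign, using $\partial_t E(t-\sigma)=-BE(t-\sigma)$ and $E(0)=I$). The remark $|\det E(\tau)|=1$ because $B$ is nilpotent is also correct and is what makes the normalization $\int\Gamma\,dy=1$ come out cleanly. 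One small slip: with the convention $\widehat f(\zeta)=\int f(x)e^{-i\zeta\cdot x}\,dx$ and $\Gamma$ normalized as in \eqref{eq.exprGammapernoi}, the Fourier transform in $x$ is
\[
\widehat\Gamma(\zeta,t;y,s)=e^{-\langle C(t,s)\zeta,\zeta\rangle-i\langle\zeta,E(t-s)y\rangle},
\]
i.e.\ the quadratic exponent carries a factor $1$, not $\tfrac14$ (the Gaussian in \eqref{eq.exprGammapernoi} has covariance $2C$, and completing the square shows the dual quadratic form is $\zeta^TC\zeta$). This does not affect the structure of the argument, only the constant that must appear when you match coefficients on the Fourier side. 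Also worth noting: since $a_{ij}(t)$ is only in $L^\infty$, the asserted identity $\mathcal L\Gamma=0$ holds for a.e.\ $t$ only (as the statement says), and the ODE for $C(t,s)$ likewise holds a.e.\ in $t$; your Fourier verification should be phrased in that almost-everywhere sense. With these cosmetic adjustments the proposal is a reasonable sketch of the cited proof.
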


The precise definition of \emph{solution to the Cauchy problem}
\eqref{eq:pbCauchyThmBP} requires some care, see \cite[Definitions 1.2 and
1.3]{BP} for the details. \medskip

In the particular case when the coefficients $a_{ij}$ of $\mathcal{L}$ are
\emph{constant}, the results of the previous theorem apply in a simpler form
(see also \cite{LP}).

\begin{theorem}
[Fundamental solution for operators with constant coefficients]%
\label{Thm fund sol cost coeff} Let $\alpha>0$ be fixed, and let
$\mathcal{L}_{\alpha}$ be the constant coefficient KFP operator
\begin{equation}
\mathcal{L}_{\alpha}u=\alpha\sum_{i=1}^{m_{0}}\partial_{x_{i}x_{i}}^{2}%
u+\sum_{k,j=1}^{N}b_{jk}x_{k}\partial_{x_{j}}u-\partial_{t}u. \label{L-alpha}%
\end{equation}
Moreover, let $\Gamma_{\alpha}$ be the fundamental solution of $\mathcal{L}%
_{\alpha}$, whose existence is guaranteed by Theorem
\ref{Thm fund sol coeff t dip}. Then, the following facts hold true:

\begin{enumerate}
\item $\Gamma_{\alpha}$ is a \emph{kernel of convolution type}, that is,
\begin{equation}
\label{eq:Gammaalfaconvolution}%
\begin{split}
\Gamma_{\alpha}(x,t;y,s)  &  =\Gamma_{\alpha}\big(x-E(t-s)y,t-s;0,0\big)\\
&  = \Gamma_{\alpha}\big((y,s)^{-1}\circ(x,t);0,0\big);
\end{split}
\end{equation}

\item the matrix $C(t,s)$ in \eqref{eq-EC} takes the simpler form
\begin{equation}
C(t,s)=C_{0}(t-s), \label{C_0}%
\end{equation}
where $C_{0}(\tau)$ is the $N\times N$ matrix defined as
\[
C_{0}(\tau)=\alpha\int_{0}^{\tau}E(t-\sigma)\cdot%
\begin{pmatrix}
I_{m_{0}} & 0\\
0 & 0
\end{pmatrix}
\cdot E(t-\sigma)^{T}d\sigma\qquad(\tau>0).
\]
Furthermore, one has the `homogeneity property'
\begin{equation}
C_{0}(\tau)=D_{0}(\sqrt{\tau})C_{0}(1)D_{0}(\sqrt{\tau})\qquad\forall
\,\,\tau>0. \label{C omogenea}%
\end{equation}

\end{enumerate}

In particular, by combining \eqref{eq.exprGammapernoi} with
\eqref{C_0}-\eqref{C omogenea}, we can write%
\begin{equation}%
\begin{split}
&  \Gamma_{\alpha}(x,t;0,0)=\frac{1}{(4\pi\alpha)^{N/2}\sqrt{\det C_{0}(t)}%
}e^{-\frac{1}{4\alpha}\left\langle C_{0}(t)^{-1}x,x\right\rangle }\\
&  \qquad=\frac{1}{(4\pi\alpha)^{N/2}t^{Q/2}\sqrt{\det C_{0}(1)}}e^{-\frac
{1}{4\alpha}\langle C_{0}(1)^{-1}\big(D_{0}\big(\frac{1}{\sqrt{t}%
}\big)x\big),\,D_{0}\big(\frac{1}{\sqrt{t}}\big)x\rangle}.
\end{split}
\label{eq.exprGammaalfa}%
\end{equation}

\end{theorem}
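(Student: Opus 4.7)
My plan is to derive all three assertions directly from Theorem \ref{Thm fund sol coeff t dip} together with the homogeneity relation \eqref{LP 2.20} and the explicit formula \eqref{eq:convolutionG} for $(y,s)^{-1}\circ(x,t)$.

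First I would treat item $(2)$, since it feeds into item $(1)$. In the constant-coefficient case, $A_0(\sigma)\equiv\alpha I_q$, so the integrand in \eqref{eq-EC} is translation-invariant in the $\sigma$-variable. Performing the change of variables $\sigma\mapsto\sigma-s$ (or equivalently $u=t-\sigma$) gives
\[
C(t,s)=\alpha\int_{0}^{t-s}E(u)\,J\,E(u)^{T}\,du,\qquad J=\begin{pmatrix}I_{q}&0\\0&0\end{pmatrix},
\]
which is exactly $C_{0}(t-s)$, thus proving \eqref{C_0}. For the homogeneity \eqref{C omogenea}, I rescale $u=\tau v$ in the last integral and substitute $E(\tau v)=D_{0}(\sqrt{\tau})E(v)D_{0}(1/\sqrt{\tau})$ from \eqref{LP 2.20}. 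The key algebraic observation is that, since $q_{1}=\cdots=q_{q}=1$, the diagonal matrix $D_{0}(1/\sqrt{\tau})$ acts as scalar multiplication by $1/\sqrt{\tau}$ on the first $q$ coordinates, so
\[
D_{0}(1/\sqrt{\tau})\,J\,D_{0}(1/\sqrt{\tau})=\tfrac{1}{\tau}J.
\]
Combined with the symmetry $D_{0}(\lambda)^{T}=D_{0}(\lambda)$, and the Jacobian factor $\tau$ from the rescaling, the $\tau$-factors cancel and one obtains $C_{0}(\tau)=D_{0}(\sqrt{\tau})C_{0}(1)D_{0}(\sqrt{\tau})$.

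Next I would deduce the convolution property $(1)$ from the explicit expression \eqref{eq.exprGammapernoi} in Theorem \ref{Thm fund sol coeff t dip}. Since $C(t,s)=C_{0}(t-s)$ depends only on $t-s$, and by \eqref{eq:convolutionG} the vector $x-E(t-s)y$ is precisely the spatial part of $(y,s)^{-1}\circ(x,t)$, a direct inspection of \eqref{eq.exprGammapernoi} shows
\[
\Gamma_{\alpha}(x,t;y,s)=\Gamma_{\alpha}\bigl(x-E(t-s)y,\,t-s;\,0,0\bigr)=\Gamma_{\alpha}\bigl((y,s)^{-1}\circ(x,t);\,0,0\bigr),
\]
which is \eqref{eq:Gammaalfaconvolution}. (Alternatively, one can observe that $\mathcal{L}_{\alpha}$ is left-invariant on $\mathbb{G}$ and appeal to the uniqueness part of statement $(5)$ of Theorem \ref{Thm fund sol coeff t dip}, but the direct verification from the formula is shorter.)

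Finally, the explicit form \eqref{eq.exprGammaalfa} is obtained by plugging the homogeneity relation $C_{0}(t)=D_{0}(\sqrt{t})C_{0}(1)D_{0}(\sqrt{t})$ into \eqref{eq.exprGammapernoi} with $(y,s)=(0,0)$. Using $\det D_{0}(\lambda)=\lambda^{Q}$ (recall \eqref{eq:defQhomdim}) yields $\sqrt{\det C_{0}(t)}=t^{Q/2}\sqrt{\det C_{0}(1)}$, while inverting the factorisation gives $C_{0}(t)^{-1}=D_{0}(1/\sqrt{t})C_{0}(1)^{-1}D_{0}(1/\sqrt{t})$; substituting inside the quadratic form rewrites the exponent in terms of $D_{0}(1/\sqrt{t})x$. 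I do not anticipate any serious obstacle: the argument is essentially a bookkeeping exercise using the $D(\lambda)$-homogeneity of $E$ and the specific structure of the dilation weights $q_{i}$ on the first $q$ coordinates, together with the formula already provided by Theorem \ref{Thm fund sol coeff t dip}.
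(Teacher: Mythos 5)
Your proposal is correct, and since the paper merely cites this theorem from \cite{LP}/\cite{BP} without reproducing a proof, your reconstruction is exactly the natural argument one would expect. The three steps — change of variable $u=t-\sigma$ to get $C(t,s)=C_0(t-s)$; rescaling $u=\tau v$ plus \eqref{LP 2.20} plus the observation $D_0(1/\sqrt{\tau})\,J\,D_0(1/\sqrt{\tau})=\tau^{-1}J$ (valid because $q_1=\cdots=q_q=1$) to get \eqref{C omogenea}; and direct substitution into \eqref{eq.exprGammapernoi} using $\det D_0(\lambda)=\lambda^{Q}$ — are all accurate and complete. One small remark: the paper's own statement is slightly inconsistent, since the stated definition of $C_0(\tau)$ already carries the factor $\alpha$ while \eqref{eq.exprGammaalfa} also displays explicit $\alpha$'s as if $C_0$ were $\alpha$-free; your direct substitution actually exposes this, and it is a typo in the statement rather than a gap in your argument.
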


Now we have recalled Theorems \ref{Thm fund sol coeff t dip}%
-\ref{Thm fund sol cost coeff}, we collect in the next theorem some \emph{fine
properties} of $\Gamma$ and of its derivatives which will be extensively used
in the sequel. In what follows, if $\bd\alpha= (\alpha_{1},\ldots,\alpha
_{N})\in(\mathbb{N}\cup\{0\})^{N}$, we set
\[
\textstyle|\bd\alpha|:=\sum_{i=1}^{N}\alpha_{i}\qquad\text{and}\qquad
\omega(\bd\alpha):=\textstyle\sum_{i=1}^{N}q_{i}\alpha_{i},
\]
where the $q_{i}$'s are the exponents appearing in the dilation $D_{0}%
(\lambda)$, see \eqref{dilations}.

\begin{theorem}
[{See \cite[Thm.s 3.5 and 3.9]{BB}}]\label{thm:finepropGamma} Let $\Gamma$ be
as in Theorem \ref{Thm fund sol coeff t dip}, and let $\nu>0$ be as in
\eqref{nu}. Then, the following assertions hold:

\begin{enumerate}
\item there exists a structural constant $c_{1}>0$ and, for every pair of
multi-in\-dices $\bd\alpha_{1},\bd\alpha_{2}\in(\N\cup\{0\})^{N}$, there
exists $c=c(\nu,\bd\alpha_{1},\bd\alpha_{2})>0$, such that
\begin{equation}%
\begin{split}
\left\vert D_{x}^{\bd\alpha_{1}}D_{y}^{\bd\alpha_{2}}\Gamma
(x,t;y,s)\right\vert  &  \leq\frac{{c}}{(t-s)^{\omega(\bd\alpha_{1}%
+\bd\alpha_{2})/2}}\,\Gamma_{c_{1}\nu^{-1}}(x,t;y,s)\\
&  \leq\frac{c}{d((x,t),(y,s))^{Q+\omega(\bd\alpha_{1}+\bd\alpha_{2})}},
\end{split}
\label{eq:mainestim}%
\end{equation}
for every $(x,t),(y,s)\in\mathbb{R}^{N}$ with $t\neq s$. In particular, we
have
\[
\left\vert D_{x}^{\bd\alpha_{1}}D_{y}^{\bd\alpha_{2}}\Gamma
(x,t;y,s)\right\vert \leq\frac{c}{d((x,t),(y,s))^{Q+\omega(\bd\alpha
_{1}+\bd\alpha_{2})}}\quad\forall\,\,(x,t)\neq(y,s).
\]

\item Let $\eta= (y,s)\in\mathbb{R}^{N+1}$ be fixed, and let $\bd\alpha
\in(\N\cup\{0\})^{N}$ be a multi-index. Then, there exists a constant
$c=c(\bd\alpha,\nu)>0$ such that
\begin{equation}
\label{eq:meanvalueGamma}|D_{x}^{\bd{\alpha}}\Gamma(\xi_{1},\eta
)-D_{x}^{\bd{\alpha}}\Gamma(\xi_{2},\eta)|\leq c\frac{d(\xi_{1},\xi_{2}%
)}{d(\xi_{1},\eta)^{Q+\omega(\bd\alpha)+1}}%
\end{equation}
for every $\xi_{1}=(x_{1},t_{1}),\xi_{2}=(x_{2},t_{2})\in\mathbb{R}^{N+1}$
such that
\[
d(\xi_{1},\eta)\geq4\bd{\kappa}d(\xi_{1},\xi_{2})>0.
\]

\end{enumerate}
\end{theorem}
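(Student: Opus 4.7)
Both estimates in \eqref{eq:mainestim} and \eqref{eq:meanvalueGamma} come from the explicit formula \eqref{eq.exprGammapernoi} for $\Gamma$, reduced to a constant-coefficient Gaussian via uniform comparability of the covariance matrix $C(t,s)$. From \eqref{eq-EC} and assumption (H1), one has the symmetric matrix bounds $\nu\,\tilde C_0(t-s)\leq C(t,s)\leq\nu^{-1}\tilde C_0(t-s)$, where $\tilde C_0(\tau)$ is the matrix from Theorem \ref{Thm fund sol cost coeff} with $A_0=I_q$. Combined with the homogeneity \eqref{C omogenea}, this gives $\det C(t,s)\asymp (t-s)^Q$ and a pointwise Gaussian domination $\Gamma(x,t;y,s)\leq c(\nu)\,\Gamma_{c_1/\nu}(x,t;y,s)$ for some absolute $c_1>0$.

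\textbf{Deriving Part (1).} Each $\de_{x_i}$ applied to $\Gamma$ brings down a factor $-\tfrac12[C(t,s)^{-1}(x-E(t-s)y)]_i$. Using $\tilde C_0(t-s)^{-1}=D_0(1/\sqrt{t-s})\,\tilde C_0(1)^{-1}\,D_0(1/\sqrt{t-s})$ together with the absorption estimate $|z|^k e^{-\lambda|z|^2}\leq c_k e^{-\lambda'|z|^2}$ (any $0<\lambda'<\lambda$), each such differentiation contributes a factor $(t-s)^{-q_i/2}$ at the cost of slightly enlarging the Gaussian constant. The $y$-derivative case is analogous: $\de_{y_j}$ contracts with the $j$-th column of $E(t-s)$, whose components scale according to \eqref{LP 2.20} and ultimately yield the same weight $\omega(\bd\alpha_2)/2$. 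Iterating these computations produces the first, Gaussian bound in \eqref{eq:mainestim}. The second, $d$-based bound is then routine: writing $d=\|x-E(t-s)y\|+\sqrt{t-s}$, one distinguishes $\|x-E(t-s)y\|\leq\sqrt{t-s}$ (where $d\asymp\sqrt{t-s}$ and the prefactor $(t-s)^{-(Q+\omega(\bd\alpha_1+\bd\alpha_2))/2}$ already suffices) from $\|x-E(t-s)y\|\gg\sqrt{t-s}$ (where the Gaussian kills any polynomial in $1/\sqrt{t-s}$).

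\textbf{Deriving Part (2).} Write $\xi_1=(x_1,t_1)$, $\xi_2=(x_2,t_2)$ and split
\[
D_x^{\bd\alpha}\Gamma(\xi_1,\eta)-D_x^{\bd\alpha}\Gamma(\xi_2,\eta)=\bigl[D_x^{\bd\alpha}\Gamma(x_1,t_1;y,s)-D_x^{\bd\alpha}\Gamma(x_2,t_1;y,s)\bigr]+\bigl[D_x^{\bd\alpha}\Gamma(x_2,t_1;y,s)-D_x^{\bd\alpha}\Gamma(x_2,t_2;y,s)\bigr].
\]
For the spatial term, connect $x_1$ to $x_2$ by $N$ axis-parallel legs whose $i$-th leg has $d$-length $|(x_1)_i-(x_2)_i|^{1/q_i}$, so that the total length equals $\|x_1-x_2\|\leq d(\xi_1,\xi_2)$. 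The fundamental theorem of calculus expresses this difference as an integral of $D_x^{\bd\alpha+e_i}\Gamma$, which Part (1) bounds by $c/d(\cdot,\eta)^{Q+\omega(\bd\alpha)+q_i}$ along the path; the standing assumption $d(\xi_1,\eta)\geq 4\bd\kappa\,d(\xi_1,\xi_2)$ together with Lemma \ref{lem:equivalentd} ensures $d(\cdot,\eta)\asymp d(\xi_1,\eta)$ uniformly on the path, and since $|(x_1)_i-(x_2)_i|\leq d(\xi_1,\xi_2)^{q_i}$ the right power combination falls out. For the time term, exploit $\de_t\Gamma=\sum_{i,j}a_{ij}(t)\de_{x_ix_j}\Gamma+\sum_{k,j}b_{jk}x_k\de_{x_j}\Gamma$ (valid a.e.\ on $t>s$ by Theorem \ref{Thm fund sol coeff t dip}) to rewrite $\de_t D_x^{\bd\alpha}\Gamma$ as a sum of spatial derivatives of $\Gamma$, then integrate from $t_1$ to $t_2$ and bound each summand by Part (1); since $|t_2-t_1|\leq d(\xi_1,\xi_2)^2$, one loses $|t_2-t_1|/d(\xi_1,\eta)^2$, which is the missing power.

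\textbf{Main obstacle.} The genuinely delicate point is the drift contribution $\sum_{k,j}b_{jk}x_k\de_{x_j}\Gamma$ to the time part, whose coefficient $x_k$ is unbounded and does not appear in $d((x,t),(y,s))=\|x-E(t-s)y\|+\sqrt{t-s}$. The remedy is to write $x=(x-E(t-s)y)+E(t-s)y$ and absorb each piece into the Gaussian: the first piece is controlled componentwise by $d(\xi,\eta)^{q_k}$, while the second is handled using Lemma \ref{Lemma E(s)x} together with the absorption $|z|^ke^{-\lambda|z|^2}\leq c_k e^{-\lambda'|z|^2}$ after applying $D_0(1/\sqrt{t-s})$. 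This careful tracking of the nonisotropic weights $q_k$ is the real technical content of the argument; it is carried out in detail in \cite[Thms.\ 3.5 and 3.9]{BB}, on which we rely.
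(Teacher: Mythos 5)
The paper itself does not prove Theorem~\ref{thm:finepropGamma}; it is quoted from \cite[Thm.s 3.5 and 3.9]{BB}, so there is no internal argument to compare against. Your sketch of Part~(1) (Gaussian domination of $\Gamma$ by $\Gamma_{c_1\nu^{-1}}$ from \emph{(H1)}, scaling of $C(t,s)^{-1}$ and of $E(t-s)$ via \eqref{C omogenea}--\eqref{LP 2.20}, and a case split between $\|x-E(t-s)y\|\lessgtr\sqrt{t-s}$) is sound. Part~(2), however, has a concrete gap in the choice of intermediate point. You interpolate between $\xi_1=(x_1,t_1)$ and $\xi_2=(x_2,t_2)$ through the Euclidean point $(x_2,t_1)$ and assert $\|x_1-x_2\|\le d(\xi_1,\xi_2)$; but $d(\xi_1,\xi_2)=\|x_1-E(t_1-t_2)x_2\|+\sqrt{|t_1-t_2|}$, and $\|x_1-x_2\|$ is not controlled by it. Taking $x_1=E(t_1-t_2)x_2$ with $|x_2|$ large gives $d(\xi_1,\xi_2)=\sqrt{|t_1-t_2|}$ while $\|x_1-x_2\|=\|(E(t_1-t_2)-I)x_2\|$ is unbounded. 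The same Euclidean splitting is what creates the ``main obstacle'' you then fight: when you integrate $\de_tD_x^{\bd\alpha}\Gamma(x_2,t;y,s)$ using the PDE, the drift coefficient $(x_2)_k$ appears with no $d$-control, and writing $(x_2)_k=\big((x_2)_k-(E(t-s)y)_k\big)+(E(t-s)y)_k$ does not save the second piece: it depends on the arbitrary pole $y$, and neither Lemma~\ref{Lemma E(s)x} nor Gaussian absorption in the rescaled variable $D_0(1/\sqrt{t-s})(x-E(t-s)y)$ bounds $(E(t-s)y)_k$ by a power of $d(\xi_1,\eta)$ or $d(\xi_1,\xi_2)$.

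The fix is to interpolate through the group-adapted point $(E(t_1-t_2)x_2,t_1)$, i.e.\ along the integral curve of $Y$ through $\xi_2$. Then the spatial increment is exactly $\|x_1-E(t_1-t_2)x_2\|\le d(\xi_1,\xi_2)$, and on the time leg the chain rule gives $\frac{d}{dt}\big[D_x^{\bd\alpha}\Gamma(E(t-t_2)x_2,t;y,s)\big]=-\big(Y D_x^{\bd\alpha}\Gamma\big)(E(t-t_2)x_2,t;y,s)$: the drift term coming from $\de_t\Gamma$ is \emph{cancelled} by the transport along the curve, so the unbounded coefficient $x_k$ never appears. Using $\LL\Gamma=0$ and the commutator $[Y,\de_{x_i}]=-\sum_jb_{ji}\de_{x_j}$ (which, by the block structure \eqref{B} of $B$, raises the weight $q_i$ by exactly $2$), one rewrites $YD_x^{\bd\alpha}\Gamma$ as a \emph{constant-coefficient} combination of pure $x$-derivatives of $\Gamma$ of homogeneous weight $\omega(\bd\alpha)+2$. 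Part~(1), Lemma~\ref{lem:equivalentd}, and $|t_1-t_2|\le d(\xi_1,\xi_2)^2$ together with $d(\xi_1,\xi_2)\lesssim d(\xi_1,\eta)$ then close the estimate as you described.
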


We conclude this subsection by recalling a {representation for\-mula for
fun\-ctions $u\in\mathcal{S}^{0}(\tau,T)$. }

\begin{theorem}
[{See \cite[Thm.\,3.11 and Cor.\,3.12]{BB}}]\label{thm:repru} Let
$T\in\mathbb{R}$ be fixed, and let $\tau<T$. Moreover, let $\mathcal{L}$ be as
in \eqref{eq:LLsolot}, and let $u\in\mathcal{S}^{0}(\tau;T)$. Then,
\begin{equation}
u(x,t)=-\int_{\mathbb{R}^{N}\times(\tau,t)}\!\!\Gamma(x,t;y,s)\mathcal{L}%
u(y,s)\,dy\,ds\quad\forall\,\,(x,t)\in S_{T}. \label{repr formula u}%
\end{equation}
Furthermore, given any $1\leq k\leq m_{0}$, the function $\partial_{x_{k}}u$
exists \emph{pointwise} on $S_{T}$ in the classical sense, and for every
$(x,t)\in S_{T}$ we have
\begin{equation}
\partial_{x_{k}}u(x,t)=-\int_{\mathbb{R}^{N}\times(\tau,t)}\!\!\partial
_{x_{k}}\Gamma(x,t;y,s)\mathcal{L}u(y,s)\,dy\,ds. \label{repr formula ux}%
\end{equation}

\end{theorem}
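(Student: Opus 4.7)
The plan is to derive the representation formula via a Duhamel/Green's identity argument, exploiting that $\Gamma$ is the fundamental solution of $\mathcal{L}$ and that $u\equiv 0$ for $s\le\tau$ by the support condition defining $\mathcal{S}^{0}(\tau,T)$. Fix $(x,t)\in S_{T}$ and, for small $\epsilon>0$, consider
\[
I_{\epsilon}:=\int_{\tau}^{t-\epsilon}\!\!\int_{\mathbb{R}^{N}}\Gamma(x,t;y,s)\,\mathcal{L}u(y,s)\,dy\,ds.
\]
I would integrate by parts in the $(y,s)$ variables over the strip $\mathbb{R}^{N}\times(\tau,t-\epsilon)$: twice for the second-order terms and once for the drift $Y$. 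The $L^{\infty}$ bounds on $u$, $\partial_{x_{i}}u$, $\partial^{2}_{x_{i}x_{j}}u$ and $Yu$ (from $\mathcal{S}^{0}$) together with the Gaussian decay of $\Gamma$ and its $y$-derivatives (Theorem \ref{thm:finepropGamma}(1)) eliminate all spatial boundary terms at infinity; and since $\operatorname{tr}(B)=0$ by the block-nilpotent structure in (H2), integrating the drift by parts produces no zero-order residual. What remains is
\[
I_{\epsilon}=\int_{\tau}^{t-\epsilon}\!\!\int_{\mathbb{R}^{N}}u(y,s)\,\mathcal{L}^{\ast}_{(y,s)}\Gamma(x,t;y,s)\,dy\,ds-\Big[\int_{\mathbb{R}^{N}}\Gamma(x,t;y,s)\,u(y,s)\,dy\Big]_{s=\tau}^{s=t-\epsilon}.
\]
By the dual statement of Theorem \ref{Thm fund sol coeff t dip}, $\mathcal{L}^{\ast}_{(y,s)}\Gamma(x,t;y,s)=0$ for $s<t$, so the interior integral vanishes; the boundary value at $s=\tau$ vanishes since $u(\cdot,\tau)\equiv 0$. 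Hence $I_{\epsilon}=-\int_{\mathbb{R}^{N}}\Gamma(x,t;y,t-\epsilon)\,u(y,t-\epsilon)\,dy$.

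Next I would send $\epsilon\to 0^{+}$. On the left, dominated convergence applies because $\mathcal{L}u\in L^{\infty}$ and $\int_{\tau}^{t}\!\int_{\mathbb{R}^{N}}\Gamma\,dy\,ds=t-\tau<\infty$ (by property (4) of Theorem \ref{Thm fund sol coeff t dip}), giving $I_{\epsilon}\to\int_{\mathbb{R}^{N}\times(\tau,t)}\Gamma\,\mathcal{L}u$. On the right, write
\[
\int\Gamma(x,t;y,t-\epsilon)u(y,t-\epsilon)\,dy=u(x,t)\int\Gamma\,dy+\int\Gamma(x,t;y,t-\epsilon)\bigl[u(y,t-\epsilon)-u(x,t)\bigr]dy.
\]
The first summand equals $u(x,t)$ by property (4); for the second I would split $y$ into a small $d$-ball around $x$, where continuity of $u$ at $(x,t)$ controls the factor in brackets, and its complement, where the Gaussian mass of $\Gamma(x,t;\cdot,t-\epsilon)$ concentrates to $0$ as $\epsilon\to 0^{+}$ and $u$ is bounded. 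This produces the first formula.

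For the formula for $\partial_{x_{k}}u$ I would differentiate the representation under the integral sign. By Theorem \ref{thm:finepropGamma}(1) (with $q_{k}=1$ for $k\le q$), one has $|\partial_{x_{k}}\Gamma(x,t;y,s)|\le c(t-s)^{-1/2}\Gamma_{c_{1}\nu^{-1}}(x,t;y,s)$, so
\[
\int_{\mathbb{R}^{N}\times(\tau,t)}|\partial_{x_{k}}\Gamma(x,t;y,s)|\,dy\,ds\le c(t-\tau)^{1/2},
\]
and the candidate integral is absolutely convergent. To justify differentiation at a fixed $(x,t)$, I would form the difference quotient along $x\mapsto x+he_{k}$ and split the $(y,s)$-domain into a small $d$-ball around $(x,t)$ (handled by the direct bound on $\partial_{x_{k}}\Gamma$ coupled with $|h|\to 0$) and its complement (handled by the mean-value estimate \eqref{eq:meanvalueGamma}, which yields the integrable bound $c|h|/d((x,t),(y,s))^{Q+2}$ on the far region). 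Dominated convergence then delivers the claim, and continuity of the limiting integral in $(x,t)$ shows that the derivative is classical.

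The main obstacle is the limit $\epsilon\to 0^{+}$ in the surface term at $s=t-\epsilon$: it is essentially a statement that $\Gamma(x,t;\cdot,s)$ behaves as an approximate identity as $s\to t^{-}$, but applied to the \emph{moving} test function $y\mapsto u(y,t-\epsilon)$ rather than to a fixed one. The splitting argument above, combining continuity of $u$ at the fixed point $(x,t)$, boundedness of $u$, and Gaussian tail control on $\Gamma$, is the point at which genuine care is needed; everything else is controlled integration by parts and absolute integrability estimates.
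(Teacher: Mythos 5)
Your overall strategy---a Green's identity/Duhamel argument over the truncated strip $\mathbb{R}^{N}\times(\tau,t-\varepsilon)$, followed by an approximate-identity limit, and then differentiation under the integral sign for $\partial_{x_{k}}u$---is the right route, and the two steps you single out as delicate (the moving test function in the $\varepsilon\to0^{+}$ limit, the near/far splitting of the difference quotient for $\partial_{x_{k}}$) are handled correctly. However, you locate the "main obstacle" in the $\varepsilon$-limit, whereas the genuine content of the proof is in the integration by parts, which you present as a routine divergence-theorem computation. It is not.

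For $u\in\mathcal{S}^{0}(\tau,T)$ the derivatives $\partial^{2}_{x_{i}x_{j}}u$ (for $i,j\le q$) and $Yu$ exist only as $L^{\infty}$ distributions, and---this is the point you overlook when you list "$L^{\infty}$ bounds on $u$, $\partial_{x_i}u$, $\partial^{2}_{x_ix_j}u$ and $Yu$" as the input---the individual quantities $\partial_{x_{j}}u$ for $j>q$ and $\partial_{t}u$ are not assumed to exist as functions at all; only the combination $Yu$ is controlled. So you cannot transpose the drift "once" in the naive sense: the vector field $Y$ must be transposed as a whole, via the distributional identity $\int\phi\,Yu=-\int u\,Y\phi$ (legitimate because $\operatorname{tr}B=0$ gives $Y^{*}=-Y$, as you observe), and the two spatial integrations by parts must likewise use the distributional definition of $\partial^{2}_{y_{i}y_{j}}u$ tested against $a_{ij}(s)\Gamma$. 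These pairings hold a priori only for compactly supported smooth test functions, and $\Gamma$ is not one; you must insert cutoffs, pass to the limit, and estimate the error terms---which are exactly the "spatial boundary terms at infinity" that you dismiss. They do vanish (the Gaussian decay of $\Gamma$ and its $y$-derivatives beats the linear growth of the drift coefficient $y_{k}$ against the $L^{\infty}$ bound on $u$), but that is a computation, not an assertion. A further point worth naming: $\Gamma$ is only Lipschitz in $s$ (Theorem \ref{Thm fund sol coeff t dip}(1)), and $\mathcal{L}^{*}_{(y,s)}\Gamma=0$ only a.e.\ in $s$; the $s$-integration by parts and the cancellation of the bulk term are a.e.\ statements. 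None of this sinks the argument, and the paper itself only cites \cite{BB} for the proof, but the proposal passes over the step where the actual work lies.
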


Starting from the repre\-sen\-ta\-tion formula \eqref{repr formula u}, in
\cite{BB} the Authors proved a representation formula for $\partial
_{x_{i}x_{j}}^{2}u$ (when $u\in\mathcal{S}^{0}(\tau,T)$ and $1\leq i,j\leq
m_{0}$) under the assumption that $\mathcal{L}u$ is partially
H\"{o}lder-continuous w.r.t.\thinspace$x$, uniformly in $t$ (see, precisely,
\cite[Cor.\,3.12 and Thm.\,3.14]{BB}). In Section \ref{sec:operatort}, we will
extend such formulas to all functions $u\in\mathcal{S}^{0}(\tau,T)$ with
$\mathcal{L}u$ only belonging to $\mathcal{D}(S_{T})$.

\subsection{Interpolation inequalities}

We conclude this preliminary section by stating some interpolation
inequalities, established in \cite{BB}, which will be exploited in the proof
of Theorem \ref{Thm main space}. \vspace*{0.1cm}

\begin{theorem}
[{See \cite[Thm. 4.2]{BB}}]\label{Thm interpolaz} Let $T\in\mathbb{R}$ be
arbitrarily fixed, and let $u\in\mathcal{S}^{0}(S_{T})$. Then, for every
$\alpha\in(0,1)$, $\xi\in S_{T}$ and $r>0$ we have
\[
u,\,\partial_{x_{k}}u\in C^{\alpha}(B_{r}^{T}(\xi))\quad\forall\,\,1\leq k\leq
m_{0},
\]
where we set:
\[
B_{r}^{T}(\xi)=B_{r}(\xi)\cap S_{T}.
\]
Moreover, the following \emph{interpolation inequality} holds: \emph{for every
$\alpha\in(0,1)$ and every $r>0$ there exist constants $c>0$ and $\gamma>1$
such that}
\begin{equation}%
\begin{split}
&  \sum_{h=1}^{m_{0}}\Vert\partial_{x_{h}u}\Vert_{C^{\alpha}(B_{r}^{T}(\xi
))}+\Vert u\Vert_{C^{\alpha}(B_{r}^{T}(\xi))}\\
&  \qquad\leq\varepsilon\bigg\{\sum_{h,k=1}^{m_{0}}\Vert\partial_{x_{k}x_{h}%
}^{2}u\Vert_{L^{\infty}(B_{4r}^{T}(\xi))}+\Vert Yu\Vert_{L^{\infty}(B_{4r}%
^{T}(\xi))}\bigg\}\\
&  \qquad\qquad+\frac{c}{\varepsilon^{\gamma}}\Vert u\Vert_{L^{\infty}%
(B_{4r}^{T}(\xi))}.
\end{split}
\label{disug interpolaz}%
\end{equation}
\emph{and this estimate holds for every $\e\in(0,1),\,\xi\in S_{T}$ and
$u\in\mathcal{S}^{0}(S_{T})$}. We stress that the con\-stant $c$ depends on
$r$ and $\alpha$, but is independent of $\e,\xi$ and $u$.
\end{theorem}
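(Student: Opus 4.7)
The plan is to prove the interpolation inequality by the classical two-scale technique, executed in the anisotropic sub-Riemannian geometry induced by the group $\mathbb{G}$ and its dilations $D(\lambda)$. The three steps are: a pointwise interpolation for $\partial_{x_k} u$; an upgrade to a $C^\alpha$ modulus; and assembly into the claimed estimate. All analysis is carried out on the concentric balls $B_r^T(\xi_0) \subset B_{4r}^T(\xi_0)$, the larger ball providing room to translate by the group operation without leaving the domain.

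First I would establish a pointwise interpolation for $\partial_{x_k} u$. Fix $\xi \in B_r^T(\xi_0)$ and $1 \leq k \leq q$, and expand $u$ along the one-parameter subgroup $h \mapsto \xi \circ (h e_k, 0)$ generated by the left-invariant vector field $X_k = \partial_{x_k}$. A second-order Taylor expansion, valid because $\partial_{x_k x_k}^2 u$ is bounded on $B_{4r}^T(\xi_0)$, gives
$$u(\xi \circ (h e_k, 0)) - u(\xi) = h\,\partial_{x_k} u(\xi) + O(h^2 M),$$
where $M := \sum_{h,k}\|\partial_{x_h x_k}^2 u\|_{L^\infty(B_{4r}^T)} + \|Yu\|_{L^\infty(B_{4r}^T)}$. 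Rearranging yields $|\partial_{x_k} u(\xi)| \leq 2\|u\|_\infty/h + c\,h\,M$, and Young's inequality converts this into the interpolated bound $|\partial_{x_k} u(\xi)| \leq \varepsilon M + c\varepsilon^{-1}\|u\|_\infty$, provided $h$ is chosen so that $\xi \circ (h e_k, 0)$ remains in $B_{4r}^T(\xi_0)$.

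Next I would upgrade this to a $C^\alpha$-modulus of $\partial_{x_k} u$. For $\xi,\eta \in B_r^T(\xi_0)$ with $\delta := d(\xi,\eta)$ small, connect $\xi$ to $\eta$ by a short concatenation of left translations along $X_1,\ldots,X_q$ and $Y$ — possible thanks to the sub-Riemannian connectedness guaranteed by (H2). Along an $X_j$-leg of $d$-length $\delta$, the function $\partial_{x_k} u$ varies by at most $\delta\,\|\partial_{x_k x_j}^2 u\|_\infty$; along a $Y$-leg it varies by at most $\delta^2\,\|Y \partial_{x_k} u\|_\infty$, and the latter is controlled through $Y \partial_{x_k} u = \partial_{x_k}(Yu) + [Y,\partial_{x_k}] u$, the commutator being a constant-coefficient combination of $\partial_{x_j}$'s dictated by the matrix $B$. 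For $\delta$ larger than a threshold $\delta^\ast$, I fall back on the trivial bound $|\partial_{x_k} u(\xi) - \partial_{x_k} u(\eta)| \leq 2\|\partial_{x_k} u\|_\infty$ together with Step 1. Balancing the two regimes at the optimal $\delta^\ast$ produces the Hölder estimate for any $\alpha \in (0,1)$. The estimate for $u$ itself then follows by the same scheme, one derivative-order simpler, using directly $|u(\xi)-u(\eta)| \leq \delta\sum_j \|\partial_{x_j} u\|_\infty + \delta^2 \|Yu\|_\infty$ and inserting the bound just obtained. The explicit power $\gamma > 1$ in $\varepsilon^{-\gamma}$ records the cumulative exponent produced by iterating Young's inequality through these passes.

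The main obstacle is the commutator $[Y,\partial_{x_k}]$, which by the block lower-triangular form (H2) of $B$ produces derivatives $\partial_{x_j}$ with $j > q$ — derivatives \emph{not} permitted on the right-hand side. One must realize these via higher-order commutators of $X_1,\ldots,X_q$ and $Y$, pick up fractional powers of $\delta$ keyed to the anisotropic dilation weights $q_i$, and absorb the resulting lower-order terms back into the $\varepsilon^{-\gamma}\|u\|_\infty$ part of the final inequality. The room to perform this absorption comes precisely from the strict inequality $\alpha < 1$ in the target Hölder exponent, which leaves a positive gap between the `natural' scaling of each commutator and the exponent demanded in the conclusion.
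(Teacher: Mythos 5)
The first step (pointwise interpolation for $\partial_{x_k}u$ via a one--dimensional Taylor expansion along the $x_k$--axis) is sound. The gap is in Step~2, at the $Y$--leg. To bound the variation of $\partial_{x_k}u$ along an integral curve of $Y$ you write the increment as $\delta^{2}\,\Vert Y(\partial_{x_k}u)\Vert_{\infty}$ and then decompose $Y\partial_{x_k}u=\partial_{x_k}(Yu)+[Y,\partial_{x_k}]u$. Neither term is controlled by the norms available on $\mathcal{S}^{0}(S_{T})$: the definition of $\mathcal{S}^{0}(S_{T})$ gives $Yu\in L^{\infty}$ but says nothing about its spatial derivatives, so $\partial_{x_k}(Yu)$ need not even exist as a function; and $[Y,\partial_{x_k}]u=-\sum_{j}b_{jk}\partial_{x_j}u$ is, by the block structure (H2) of $B$, a combination of $\partial_{x_j}u$ with $q<j\le q+m_{1}$, derivatives which are likewise not in the list $u,\partial_{x_i}u,\partial^{2}_{x_ix_j}u$ $(i,j\le q)$, $Yu$. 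Your proposed remedy --- realize those directions via higher commutators of $X_1,\dots,X_q,Y$ and ``absorb the resulting lower--order terms into $\varepsilon^{-\gamma}\Vert u\Vert_{\infty}$'' --- begs the question: the commutators exist at the level of the vector fields (that is the Hörmander condition), but this does not promote $\partial_{x_j}u$ for $j>q$ to bounded functions, and the absorption would already require an a priori bound on those derivatives of exactly the sort you are trying to prove. Note also that this problem is absent for $u$ itself: along an $X_i$--leg the variation is $\le\delta\Vert\partial_{x_i}u\Vert_{\infty}$ and along a $Y$--leg it is $\le\delta^{2}\Vert Yu\Vert_{\infty}$, all controlled; it is specifically the time regularity of the \emph{first} derivatives $\partial_{x_k}u$ that breaks the path argument.

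A further remark: the paper itself does not prove this theorem --- it is cited from \cite[Thm.\,4.2]{BB} --- and the quoted source proves it by a representation--formula argument built on the explicit fundamental solution $\Gamma$ of the model operator (the object studied at length in Sections~2.3 and~3). In that approach one writes $u$ (or $u\phi$ for a cutoff $\phi$) as a convolution of $\mathcal{L}u$ against $\Gamma$ plus commutator terms, and the temporal Hölder continuity of $\partial_{x_k}u$ is extracted from the smoothness and decay of $\partial_{x_k}\Gamma$ in the joint variables, the offending $y$--derivatives being transferred onto the kernel by integration by parts. That machinery is what replaces the unavailable pointwise control on $Y\partial_{x_k}u$ in the path argument you attempted, and cannot be dispensed with.
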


\begin{remark}
\label{rem:HolderDini} We explicitly highlight, for a future reference, the
following easy yet important fact: \emph{if $\Omega\subseteq\mathbb{R}^{N+1}$
is an arbitrary open set and if $f\in C^{\alpha}(\Omega)$ for some $\alpha
\in(0,1)$, then $f\in\mathcal{D}(\Omega)$ and we have the estimate}
\begin{align*}
\omega_{f,\Omega}(r)  &  =\sup
_{\begin{subarray}{c} (x,t),(y,t)\in\Omega \\ \|x-y\| \leq r \end{subarray}}%
|f(x,t)-f(y,t)|\\
&  \leq|f|_{C^{\alpha}(\Omega)}\cdot\sup
_{\begin{subarray}{c} (x,t),(y,t)\in\Omega \\ \|x-y\| \leq r \end{subarray}}%
d\big((x,t),(y,t)\big)^{\alpha}\leq r^{\alpha}|f|_{C^{\alpha}(\Omega)},
\end{align*}
\emph{where we have also used \eqref{eq:explicitd}}. As a consequence, we
easily deduce%
\[
\mathcal{M}_{f,\Omega}(r)\leq cr^{\alpha}|f|_{C^{\alpha}(\Omega)}.
\]

\end{remark}

\section{Operators with coefficients only depending on $t$}

\label{sec:operatort} In this section we establish some `weaker' versions of
Theorems \ref{Thm main space}-\ref{Thm main time} for KFP operators with
coefficients $a_{ij}$ only depending on $t$; we will use these results as a
crucial tool to prove Theorems \ref{Thm main space}-\ref{Thm main time} in
Section \ref{sec:proofMainThm}. Throughout what follows, we tacitly understand
that $\LL$ is as in \eqref{eq:LLsolot}, and that the structural assumptions
(H1)-(H2) stated in the Introduction are satisfied (without the need of repeat
it); moreover, $\Gamma$ denotes the fundamental solution of $\LL$, as in
Theorem \ref{Thm fund sol coeff t dip}. \vspace*{0.1cm}

To begin with, we extend to all functions $u\in\mathcal{S}^{0}(\tau,T)$ with
$\mathcal{L}u\in\mathcal{D}(S_{T})$ the representation formula for
$\partial_{x_{i}x_{j}}^{2}u$ (where $1\leq i,j\leq m_{0}$) proved in
\cite[Thm.\,3.14]{BB} under the more restrictive assumption that
$\mathcal{L}u$ is \emph{partially H\"{o}lder continuous w.r.t.\thinspace$x$}.
In this direction, a first key tool is the following proposition.

\begin{proposition}
\label{prop:Analoga313} There exist structural constants $c,\,\mu>0$ such
that, for every fixed $T\in\mathbb{R}$, every $f\in\mathcal{D}(S_{T})$,
$x\in\mathbb{R}^{N}$ and $\tau<t<T$, one has
\begin{equation}%
\begin{split}
\int_{\mathbb{R}^{N}\times(\tau,t)}  &  |\partial_{x_{i}x_{j}}^{2}%
\Gamma(x,t;y,s)|\cdot\omega_{f,S_{T}}(\Vert E(s-t)x-y\Vert)\,dy\,ds\\
&  \leq{c}\,\mathcal{U}_{f,S_{T}}^{\mu}(\sqrt{t-\tau}),
\end{split}
\label{eq:integraldexixjconv}%
\end{equation}
where $\mathcal{U}_{f,S_{T}}^{\mu}$ is as in \eqref{eq:defUmuIntro}. In
particular, we have
\begin{equation}%
\begin{split}
\int_{\mathbb{R}^{N}\times(t-\e,t)}  &  |\partial_{x_{i}x_{j}}^{2}%
\Gamma(x,t;y,s)|\cdot\omega_{f,S_{T}}(\Vert E(s-t)x-y\Vert)\,dy\,ds\rightarrow
0\\
&  \text{\emph{uniformly w.r.t.\thinspace$(x,t)\in\mathbb{R}^{N+1}$} as
$\e\rightarrow0^{+}$}.
\end{split}
\label{eq:integraldexixjunifconv}%
\end{equation}

\end{proposition}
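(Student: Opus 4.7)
The plan is to reduce the left-hand side of \eqref{eq:integraldexixjconv} to the form defining $\mathcal{U}^{\mu}_{f,S_{T}}$ by combining the Gaussian bound on $\partial^{2}_{x_ix_j}\Gamma$ with the convolution structure and the homogeneity of the constant-coefficient kernel $\Gamma_{\alpha}$. First I would apply Theorem \ref{thm:finepropGamma}: since $i,j\le q$, so that $q_{i}=q_{j}=1$ and $\omega(\bd{\alpha}_{1}+\bd{\alpha}_{2})=2$, one gets
\[
|\partial^{2}_{x_ix_j}\Gamma(x,t;y,s)|\le \frac{c}{t-s}\,\Gamma_{c_{1}\nu^{-1}}(x,t;y,s).
\]
Then \eqref{eq:Gammaalfaconvolution}--\eqref{eq.exprGammaalfa} give an explicit Gaussian expression for $\Gamma_{\alpha}$, depending only on the single variable $x-E(t-s)y$ and on the time gap $t-s$.

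The key calculation is a double change of variables. First I would set $y=E(s-t)x-w$, which has unit Jacobian (since $E(\sigma)=\exp(-\sigma B)$ with $B$ nilpotent), turns the modulus-of-continuity argument into $\|E(s-t)x-y\|=\|w\|$, and turns the Gaussian variable into $x-E(t-s)y=E(t-s)w$. Then, using the homogeneity identity \eqref{LP 2.20} in the form $D_{0}(1/\sqrt{t-s})\,E(t-s)=E(1)\,D_{0}(1/\sqrt{t-s})$, I would perform a second change of variable $w=D_{0}(\sqrt{t-s})z$ (Jacobian $(t-s)^{Q/2}$). This causes the Gaussian to become
\[
e^{-\frac{1}{4\alpha}\langle C_{0}(1)^{-1}E(1)z,\,E(1)z\rangle}\le e^{-\mu|z|^{2}}
\]
for some constant $\mu>0$ depending only on $\nu$, since $C_{0}(1)$ is symmetric positive definite and $E(1)$ is invertible. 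Simultaneously, by $1$-homogeneity of $\|\cdot\|$ under $D_{0}$, we have $\|w\|=\sqrt{t-s}\,\|z\|$, and the factor $(t-s)^{Q/2}$ from the Jacobian exactly cancels against the $(t-s)^{-Q/2}$ in front of $\Gamma_{\alpha}$.

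After these manipulations the left-hand side of \eqref{eq:integraldexixjconv} is bounded by
\[
c\int_{\R^{N}}e^{-\mu|z|^{2}}\left(\int_{\tau}^{t}\frac{\omega_{f,S_{T}}(\sqrt{t-s}\,\|z\|)}{t-s}\,ds\right)dz,
\]
where the exchange of order is justified by Fubini (the integrand is non-negative and $\omega_{f,S_{T}}$ is bounded). Substituting $u=\sqrt{t-s}$ and then $v=u\|z\|$ inside the inner integral gives $2\int_{0}^{\sqrt{t-\tau}\,\|z\|}\omega_{f,S_{T}}(v)/v\,dv$, which is precisely the integrand defining $\mathcal{U}^{\mu}_{f,S_{T}}(\sqrt{t-\tau})$; this proves \eqref{eq:integraldexixjconv}. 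The uniform convergence \eqref{eq:integraldexixjunifconv} is then immediate by applying the main estimate with $\tau=t-\e$: the bound $c\,\mathcal{U}^{\mu}_{f,S_{T}}(\sqrt{\e})$ is independent of $(x,t)$ and vanishes as $\e\to0^{+}$ by Lemma \ref{lem:Ufwelldef}(ii).

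The main technical point to watch is the coordinated use of the two changes of variables with the homogeneity identity for $E$, so that the resulting Gaussian has a shape independent of $(x,t,s)$ and the entire $s$-dependence is packaged into the product $\sqrt{t-s}\,\|z\|$ inside $\omega_{f,S_{T}}$. The lower bound $\mu>0$ on the final quadratic form, which is what makes the estimate uniform, follows from positive definiteness of $C_{0}(1)^{-1}$ and invertibility of $E(1)$, both of which are available on general grounds from the structure of $B$ and from (H1).
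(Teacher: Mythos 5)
Your proof is correct and follows essentially the same route as the paper's: bound $\partial^2_{x_ix_j}\Gamma$ by $\frac{c}{t-s}\Gamma_{c_1\nu^{-1}}$, change variables $y = E(s-t)x - D_0(\sqrt{t-s})z$ (which the paper does in a single step, while you split it into two), use \eqref{LP 2.20} and the invertibility of $E(1)$ to obtain a Gaussian in $z$ alone, apply Fubini, and substitute to produce $\mathcal{U}^\mu_{f,S_T}(\sqrt{t-\tau})$. The uniform-convergence conclusion from Lemma \ref{lem:Ufwelldef}(ii) is also identical.
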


\begin{proof}
The proof of this proposition is similar to that of \cite[Prop.\,3.13]{BB}; we
sketch it here for the sake of completeness, but we omit the details.
\vspace*{0.05cm} First of all, by combining estimate \eqref{eq:mainestim} with
\eqref{eq:Gammaalfaconvolution}-\eqref{eq.exprGammaalfa}, we have
\begin{align*}
&  \int_{\mathbb{R}^{N}\times(\tau,t)}|\partial_{x_{i}x_{j}}^{2}%
\Gamma(x,t;y,s)|\cdot\omega_{f,S_{T}}(\Vert E(s-t)x-y\Vert)\,dy\,ds\\
&  \quad\leq c\int_{\mathbb{R}^{N}\times(\tau,t)}\frac{e^{-\mu|D_{0}%
\big(\frac{1}{\sqrt{t-s}}\big)(E(s-t)x-y)|^{2}}}{(t-s)^{Q/2+1}}\cdot
\omega_{f,S_{T}}(\Vert E(s-t)x-y\Vert)\,dy\,ds\\
&  \quad\leq c\int_{\tau}^{t}\frac{1}{(t-s)^{Q/2+1}}\cdot\mathcal{I}%
_{x,t}(s)\,ds=(\bigstar),
\end{align*}
where we have introduced the notation
\[
\mathcal{I}_{x,t}(s)=\int_{\RN}e^{-\mu|D_{0}\big(\frac{1}{\sqrt{t-s}%
}\big)(E(s-t)x-y)|^{2}}\,\omega_{f,S_{T}}(\Vert E(s-t)x-y\Vert)\,dy,
\]
and $c,\mu>0$ are structural constants. We explicitly mention that, in the
above estimate, we have also used \eqref{LP 2.20} and the non-singularity of
$E(1)$. Then, by performing in the integral $\mathcal{I}_{x,t}(s)$ the change
of variables
\[
y=E(s-t)x-D_{0}(\sqrt{t-s})z,
\]
and by exploiting the $1$-homogeneity of $\Vert\cdot\Vert$, we obtain
\begin{align*}
(\bigstar)  &  =c\int_{\tau}^{t}\frac{1}{t-s}\bigg(\int_{\mathbb{R}^{N}%
}e^{-\mu|z|^{2}}\omega_{f,S_{T}}(\sqrt{t-s}\Vert z\Vert)\,dz\bigg)ds\\
&  =c\int_{\RN}e^{-\mu|z|^{2}}\Big(\int_{\tau}^{t}\frac{\omega_{f,S_{T}}%
(\sqrt{t-s}\Vert z\Vert)}{t-s}\,ds\Big)dz\\
&  =c\int_{\RN}e^{-\mu|z|^{2}}\Big(\int_{0}^{\sqrt{t-\tau}\Vert z\Vert}%
\frac{\omega_{f,S_{T}}(\sigma)}{\sigma}\,d\sigma\Big)dz=c\,\mathcal{U}%
_{f,S_{T}}^{\mu}(\sqrt{t-\tau}).
\end{align*}
Since the constants $c,\mu>0$ only depend on $\nu$, this is exactly the
desired \eqref{eq:integraldexixjconv}. As regards
\eqref{eq:integraldexixjunifconv}, it is a direct consequence of
\eqref{eq:integraldexixjconv} and Lemma \ref{lem:Ufwelldef}.
\end{proof}

Thanks to Proposition \ref{prop:Analoga313}, we can now prove the following theorem.

\begin{theorem}
\label{Corollary repr formula uxx} For $T>t>\tau>-\infty$, let $u\in
\mathcal{S}^{0}(\tau;T)$ be such that $\mathcal{L}u\in\mathcal{D}(S_{T})$.
Then, for every $1\leq i,j\leq m_{0}$ the second-order derivatives
$\partial_{x_{i}x_{j}}^{2}u$ \emph{exist pointwinse} on $S_{T}$ in the
classical sense, and for every $(x,t)\in S_{T}$ we have
\begin{equation}
\partial_{x_{i}x_{j}}^{2}u(x,t)=\int_{\mathbb{R}^{N}\times(\tau,t)}%
\partial_{x_{i}x_{j}}^{2}\Gamma(x,t;y,s)\big[\mathcal{L}%
u(E(s-t)x,s)-\mathcal{L}u(y,s)\big]\,dy\,ds. \label{repr formula u_xx}%
\end{equation}

\end{theorem}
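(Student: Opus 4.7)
The idea is to derive the formula by differentiating a time-truncated version of the representation formula \eqref{repr formula u}, and then to pass to the limit using the estimates of Proposition~\ref{prop:Analoga313} together with the key cancellation
\[
\int_{\R^N}\partial_{x_ix_j}^2\Gamma(x,t;y,s)\,dy=0\qquad(s<t),
\]
which follows from \eqref{eq:integralGamma1} by differentiating twice under the integral, justified by the Gaussian $y$-decay of $\partial_{x_ix_j}^2\Gamma$ recorded in Theorem~\ref{thm:finepropGamma}.

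Denote by $V(x,t)$ the right-hand side of \eqref{repr formula u_xx}. Since $\mathcal{L}u(E(s-t)x,s)$ and $\mathcal{L}u(y,s)$ share the time coordinate $s$, the partial modulus gives
\[
|\mathcal{L}u(E(s-t)x,s)-\mathcal{L}u(y,s)|\le \omega_{\mathcal{L}u,S_T}(\|E(s-t)x-y\|),
\]
so Proposition~\ref{prop:Analoga313} (combined with Lemma~\ref{lem:Ufwelldef}) shows that the absolute integral defining $V(x,t)$ is bounded by $c\,\mathcal{U}_{\mathcal{L}u,S_T}^\mu(\sqrt{t-\tau})<\infty$; thus $V$ is well defined on $S_T$. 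For $\varepsilon>0$, I introduce the truncation
\[
u_\varepsilon(x,t):=-\int_{\R^N\times(\tau,t-\varepsilon)}\Gamma(x,t;y,s)\,\mathcal{L}u(y,s)\,dy\,ds,
\]
on whose integration domain the kernel and its $x$-derivatives are smooth with Gaussian $y$-decay, so I may differentiate twice under the integral to get
\[
\partial_{x_ix_j}^2 u_\varepsilon(x,t)=-\int_{\R^N\times(\tau,t-\varepsilon)}\partial_{x_ix_j}^2\Gamma(x,t;y,s)\,\mathcal{L}u(y,s)\,dy\,ds.
\]
Applying the cancellation identity to the $y$-constant quantity $\mathcal{L}u(E(s-t)x,s)$ (legitimate by the same absolute bound), I rewrite this as
\[
\partial_{x_ix_j}^2 u_\varepsilon(x,t)=\int_{\R^N\times(\tau,t-\varepsilon)}\partial_{x_ix_j}^2\Gamma(x,t;y,s)\bigl[\mathcal{L}u(E(s-t)x,s)-\mathcal{L}u(y,s)\bigr]\,dy\,ds.
\]

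To conclude, I pass to the limit $\varepsilon\to0^+$. By \eqref{eq:integraldexixjunifconv}, the missing tail (over $s\in(t-\varepsilon,t)$) vanishes uniformly in $(x,t)\in S_T$, so $\partial_{x_ix_j}^2 u_\varepsilon\to V$ uniformly on $S_T$. On the other hand, the representation formulas \eqref{repr formula u}--\eqref{repr formula ux}, together with $\int_{\R^N}\Gamma\,dy=1$ and the Gaussian bound $|\partial_{x_k}\Gamma|\le c(t-s)^{-1/2}\Gamma_{c_1\nu^{-1}}$ from \eqref{eq:mainestim}, yield
\[
|u-u_\varepsilon|\le\|\mathcal{L}u\|_{L^\infty}\,\varepsilon,\qquad |\partial_{x_k}u-\partial_{x_k}u_\varepsilon|\le c\|\mathcal{L}u\|_{L^\infty}\sqrt\varepsilon,
\]
uniformly on $S_T$. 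Viewing $\partial_{x_i}u_\varepsilon(\cdot,t)$ as a smooth function of the single variable $x_j$ with the other coordinates frozen, the standard theorem on differentiation under uniform convergence then gives that $\partial_{x_j}(\partial_{x_i}u)(x,t)$ exists pointwise in the classical sense and equals $V(x,t)$. The most delicate step, which I would write out carefully, is the cancellation identity $\int_{\R^N}\partial_{x_ix_j}^2\Gamma(x,t;y,s)\,dy=0$ together with the absorption of the $y$-constant $\mathcal{L}u(E(s-t)x,s)$ inside the integral; both reduce to the sharp Gaussian $y$-decay of Theorem~\ref{thm:finepropGamma} and the modulus-of-continuity bound of Proposition~\ref{prop:Analoga313}.
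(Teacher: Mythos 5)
Your proof is correct and follows essentially the same route as the paper: define an $\varepsilon$-truncated version of the representation formula, differentiate under the integral sign, use the vanishing of $\int_{\R^N}\partial_{x_ix_j}^2\Gamma\,dy$ (coming from \eqref{eq:integralGamma1}) to insert the freezing term $\mathcal{L}u(E(s-t)x,s)$, and pass to the limit $\varepsilon\to 0^+$ via Proposition~\ref{prop:Analoga313}, which gives uniform convergence of the second derivatives. The only cosmetic difference is that you truncate the formula for $u$ and recover both $\partial_{x_j}u$ and $\partial_{x_ix_j}^2u$ in the limit, while the paper truncates the already-established formula \eqref{repr formula ux} for $\partial_{x_j}u$ and takes a single further derivative; both variants rest on the identical cancellation and uniform-convergence ingredients.
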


\begin{proof}
The proof of this result is essentially analogous to that of \cite[Thm.\,3.14]%
{BB}, but we use Proposition \ref{prop:Analoga313} in place of
\cite[Prop.\,3.13]{BB}. For the sake of completeness we sketch the argument,
but we refer to \cite{BB} for the details. \vspace*{0.05cm} First of all we
observe that, owing to Proposition \ref{prop:Analoga313} (and taking into
account the very definition of $\omega_{f,S_{T}}$ in Definition \ref{Def Dini}%
), we have
\begin{align*}
&  \bigg|\int_{\mathbb{R}^{N}\times(\tau,t)}|\partial_{x_{i}x_{j}}^{2}%
\Gamma(x,t;y,s)|\cdot|\mathcal{L}u(E(s-t)x,s)-\mathcal{L}%
u(y,s)|\,dy\,ds\bigg|\\
&  \quad\leq\bigg|\int_{\mathbb{R}^{N}\times(\tau,t)}|\partial_{x_{i}x_{j}%
}^{2}\Gamma(x,t;y,s)|\cdot\omega_{\mathcal{L}u,S_{T}}(\Vert E(s-t)x-y\Vert
)\,dy\,ds\bigg|\\
&  \quad\leq c\,\mathcal{U}_{\mathcal{L}u,S_{T}}^{\mu}(\sqrt{|t-\tau|}%
)\quad\forall\,\,(x,t)\in S_{T}%
\end{align*}
(where $c,\mu>0$ only depend on $\nu$); hence, the function
\[
g(x,t):=\int_{\mathbb{R}^{N}\times(\tau,t)}\partial_{x_{i}x_{j}}^{2}%
\Gamma(x,t;y,s)\big[\mathcal{L}u(E(s-t)x,s)-\mathcal{L}u(y,s)\big]\,dy\,ds
\]
is {well-defined} on $S_{T}$. We then turn to prove that $\partial_{x_{i}%
x_{j}}^{2}u=g$ {po\-int\-wi\-se in $S_{T}$} by an approximation argument. To
this end, we fix $0<\e\ll1$ and we define
\[
v_{\varepsilon}(x,t):=-\int_{\mathbb{R}^{N}\times(\tau,t-\e)}\partial_{x_{j}%
}\Gamma(x,t;\cdot)\,\mathcal{L}u\,dy\,ds.
\]
Now, using \eqref{repr formula ux} and taking into account the regularity of
$\Gamma$, we see that \vspace*{0.1cm}

(i)\thinspace\thinspace$v_{\varepsilon}\in C(S_{T})$ and $v_{\varepsilon
}\rightarrow\partial_{x_{j}}u$ pointwise in $S_{T}$ as $\e\rightarrow0^{+}$;

(ii)\thinspace\thinspace$v_{\varepsilon}$ is continuously differentiable
w.r.t.\thinspace$x_{i}$ on $S_{T}$, and
\begin{align*}
\partial_{x_{i}}v_{\varepsilon}(x,t)  &  =-\int_{\mathbb{R}^{N}\times
(\tau,t-\e)}\partial_{x_{i}x_{j}}^{2}\Gamma(x,t;y,s)\times\\
&  \qquad\quad\times\big[\mathcal{L}u(E(s-t)x,s)-\mathcal{L}%
u(y,s)\big]\,dy\,ds\quad\forall\,\,(x,t)\in S_{T}.
\end{align*}
We explicitly stress that, in computing $\partial_{x_{i}}v_{\e}$, we have also
used \eqref{eq:integralGamma1}. On the other hand, again by Proposition
\ref{prop:Analoga313}, we have
\begin{align*}
&  |\partial_{x_{i}}v_{\varepsilon}(x,t)-g(x,t)|\\
&  \qquad=\int_{\mathbb{R}^{N}\times(t-\e,t)}|\partial_{x_{i}x_{j}}^{2}%
\Gamma(x,t;\cdot)|\,|\mathcal{L}u(E(s-t)x,s)-\mathcal{L}u|\,dy\,ds\\
&  \qquad\leq\int_{\mathbb{R}^{N}\times(t-\e,t)}|\partial_{x_{i}x_{j}}%
^{2}\Gamma(x,t;\cdot)|\ \cdot\omega_{\mathcal{L}u,S_{T}}(\Vert E(s-t)x-y\Vert
)\,dy\,ds\\
&  \qquad\leq c\,\mathcal{U}_{\mathcal{L}u,S_{T}}^{\mu}(\sqrt{\e}%
)\qquad\text{for every $(x,t)\in S_{T}$},
\end{align*}
and this shows that $\partial_{x_{i}}v_{\varepsilon}\rightarrow g$ {uniformly
on $S_{T}$ as $\e\rightarrow0^{+}$}. Gathering these facts, by standard
results, we then conclude that there exists
\[
\partial_{x_{i}x_{j}}^{2}u=\partial_{x_{i}}(\partial_{x_{j}}u)=g\quad
\text{pointwise in $S_{T}$}.
\]
This ends the proof.
\end{proof}

Now that we have established the representation formula
\eqref{repr formula u_xx}, we can prove the announced weaker version of
Theorem \ref{Thm main space} for KFP operators with coefficients $a_{ij}$ only
depending on $t$. Actually, we will deduce this result from the following
ge\-neral theorem, which will be used as a key tool in the next section.

\begin{theorem}
[Singular integrals and Dini-continuous functions]\label{Thm Dini sing int}
For e\-very fixed $T\in\mathbb{R}$ and $-\infty<\tau<T$, let us introduce the
function space
\[
\mathcal{D}(\tau;T):=\{f\in\mathcal{D}(S_{T}):\,\text{$f(x,t)=0$ for every
$t\leq\tau$}\},
\]
and define, on this space $\mathcal{D}(\tau;T)$, the linear operator
\[
f\mapsto T_{ij}f(x,t):=\int_{\mathbb{R}^{N}\times(\tau,t)}\partial_{x_{i}%
x_{j}}^{2}\Gamma(x,t;y,s)\,\big[f(E(s-t)x,s)-f(y,s)\big]\,dy\,ds.
\]
Then, there exist structural constants $c,\mu>0$ such that
\begin{align}
\Vert T_{ij}f\Vert_{L^{\infty}(S_{T})}  &  \leq{c}\,\mathcal{U}_{f,S_{T}}%
^{\mu}(\sqrt{T-\tau})\label{sup bound}\\
\omega_{T_{ij}f}(r)  &  \leq c\,\mathcal{M}_{f,S_{T}}(cr)\quad\forall\,\,r>0.
\label{Dini bound}%
\end{align}
Here, $\mathcal{M}_{f,S_{T}}$ is the function defined in \eqref{eq:defcontinM}.
\end{theorem}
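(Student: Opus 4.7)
The first estimate \eqref{sup bound} will follow immediately from Proposition \ref{prop:Analoga313}: since the function values $f(E(s-t)x,s)$ and $f(y,s)$ share the common time coordinate $s$, one has $|f(E(s-t)x,s)-f(y,s)|\le\omega_{f,S_T}(\|E(s-t)x-y\|)$, and then \eqref{eq:integraldexixjconv} together with the monotonicity of $\mathcal{U}_{f,S_T}^{\mu}$ in its argument gives the bound by $c\,\mathcal{U}_{f,S_T}^{\mu}(\sqrt{T-\tau})$.

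For the harder bound \eqref{Dini bound}, I will fix $\xi_1=(x_1,t)$, $\xi_2=(x_2,t)$ with $r:=\|x_1-x_2\|=d(\xi_1,\xi_2)$, and write $c_k(s):=f(E(s-t)x_k,s)$ for $k=1,2$. The first step is to establish the identity
\[
T_{ij}f(\xi_1)-T_{ij}f(\xi_2)=\int_{\mathbb{R}^N\times(\tau,t)}\bigl[\partial_{x_{i}x_{j}}^{2}\Gamma(\xi_1,\eta)-\partial_{x_{i}x_{j}}^{2}\Gamma(\xi_2,\eta)\bigr](c_1(s)-f(y,s))\,d\eta.
\]
This comes from adding and subtracting $\partial_{x_{i}x_{j}}^{2}\Gamma(\xi_2,\eta)(c_1(s)-f(y,s))$; the resulting ``cross'' integral $\int\partial_{x_{i}x_{j}}^{2}\Gamma(\xi_2,\eta)(c_1(s)-c_2(s))\,d\eta$ vanishes since $c_1(s)-c_2(s)$ depends only on $s$ and $\int_{\mathbb{R}^N}\partial_{x_{i}x_{j}}^{2}\Gamma(\xi_2,(y,s))\,dy=0$ (by twice differentiating \eqref{eq:integralGamma1} in $x$).

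Setting $R:=4\bd{\kappa}\,r$, I then split the integral into Region I $=B_R(\xi_1)$ and Region II $=\mathbb{R}^{N+1}\setminus B_R(\xi_1)$. On Region II the hypothesis of \eqref{eq:meanvalueGamma} is satisfied, giving $|\partial_{x_{i}x_{j}}^{2}\Gamma(\xi_1,\eta)-\partial_{x_{i}x_{j}}^{2}\Gamma(\xi_2,\eta)|\le c r/d(\xi_1,\eta)^{Q+3}$; combined with $|c_1(s)-f(y,s)|\le\omega_{f,S_T}(d(\xi_1,\eta))$ and Lemma \ref{lem:doublingomega}-\eqref{Lemma A}, this yields a contribution bounded by $cr\int_{cr}^\infty\omega_{f,S_T}(s)/s^2\,ds$. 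On Region I, I further split $c_1(s)-f(y,s)=[c_2(s)-f(y,s)]+[c_1(s)-c_2(s)]$ and treat the three resulting terms separately. The $\partial_{x_{i}x_{j}}^{2}\Gamma(\xi_1,\eta)\cdot(c_1(s)-f(y,s))$ piece and the $\partial_{x_{i}x_{j}}^{2}\Gamma(\xi_2,\eta)\cdot(c_2(s)-f(y,s))$ piece are both absolutely convergent, and using the pointwise bound $|\partial_{x_{i}x_{j}}^{2}\Gamma|\le c/d^{Q+2}$, the inclusion $B_R(\xi_1)\subset B_{cR}(\xi_2)$, and Lemma \ref{lem:doublingomega}-\eqref{Lemma B}, each is estimated by $c\int_0^{cr}\omega_{f,S_T}(s)/s\,ds$.

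The main obstacle is the remaining piece $\int_{B_R(\xi_1)}\partial_{x_{i}x_{j}}^{2}\Gamma(\xi_2,\eta)(c_1(s)-c_2(s))\,d\eta$. A naive absolute estimate fails, because $\xi_2\in B_R(\xi_1)$ and $\int_{B_R(\xi_1)}|\partial_{x_{i}x_{j}}^{2}\Gamma(\xi_2,\eta)|\,d\eta$ diverges. Here I will exploit the cancellation a second time: since $c_1(s)-c_2(s)$ depends only on $s$, the vanishing of $\int_{\mathbb{R}^N}\partial_{x_{i}x_{j}}^{2}\Gamma(\xi_2,(y,s))\,dy$ for fixed $s$ allows me to replace the integral over the $y$-slice of $B_R(\xi_1)$ by the integral over its complement, where $d(\xi_2,(y,s))\gtrsim R$; the Gaussian bound \eqref{eq:mainestim} then produces an exponentially small tail whose $s$-integral on $(t-R^2,t)$ is bounded by an absolute constant. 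Coupled with the elementary estimate $|c_1(s)-c_2(s)|\le c\,\omega_{f,S_T}(cr)$ (which uses Lemma \ref{Lemma E(s)x} to bound $\|E(s-t)(x_1-x_2)\|$ by $c(r+\sqrt{t-s})\le cR$ and the trivial inequality $\omega(a+b)\le\omega(2a)+\omega(2b)$ valid for any non-decreasing nonnegative $\omega$), this last piece contributes $c\,\omega_{f,S_T}(cr)$. Summing the three contributions gives
\[
|T_{ij}f(\xi_1)-T_{ij}f(\xi_2)|\le c\Bigl[\omega_{f,S_T}(cr)+\int_0^{cr}\frac{\omega_{f,S_T}(s)}{s}\,ds+cr\int_{cr}^\infty\frac{\omega_{f,S_T}(s)}{s^2}\,ds\Bigr]\le c\,\mathcal{M}_{f,S_T}(cr),
\]
which is exactly \eqref{Dini bound}.
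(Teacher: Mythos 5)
Your proof is correct and uses essentially the same ingredients as the paper (the mean value inequality \eqref{eq:meanvalueGamma} plus Lemma~\ref{lem:doublingomega}\eqref{Lemma A} on the far region, Lemma~\ref{lem:doublingomega}\eqref{Lemma B} on the near region, and the cancellation $\int_{\RN}\partial^2_{x_ix_j}\Gamma\,dy=0$ for the non-absolutely-convergent term involving $c_1(s)-c_2(s)$). The organization, however, differs in a genuine way, and it is worth noting what each route buys. The paper splits first into the near/far regions $\{d(\xi_2,\eta)\lessgtr 4\bd\kappa\,d(\xi_2,\xi_1)\}$, rewrites only the far contribution as $\mathrm{A}_{11}+\mathrm{A}_{12}$, and for the critical term $\mathrm{A}_{12}=\int_{\mathrm{far}}\partial^2\Gamma(\xi_2,\cdot)(c_1-c_2)$ cites the cancellation property of the truncated singular integral $\mathcal{J}(s)$ from \cite[Thm.\,3.16]{BB}, observing that the H\"older-case argument of \cite[Thm.\,3.17]{BB} transfers because only the monotonicity of $\omega_{f,S_T}$ is used. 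You instead exploit the cancellation globally at the outset, obtain the single-integrand identity, split into $B_R(\xi_1)$ and its complement, and then use the cancellation a second time on the near region to convert your ``piece 3'' into a far-region integral with $s$-range restricted to $(\tau\vee(t-R^2),t)$, which you then bound by a direct Gaussian-tail computation. Your flipped piece 3 is (up to absolute constants and the precise indexing of the far region) the same object as the paper's $\mathrm{A}_{12}$, and your Gaussian-tail argument is in effect a self-contained re-derivation of the cited $\mathcal{J}(s)$ bound, which is an advantage in terms of self-containedness; the price is a slightly more circuitous bookkeeping, since the cancellation you introduce globally must later be undone locally.

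Two small technical remarks. First, with the paper's convention $B_R(\xi_1)=\{\eta:\,d(\eta,\xi_1)<R\}$ and $R=4\bd\kappa r$, the far region only guarantees $d(\xi_1,\eta)\geq R/\bd\kappa=4r$, which falls short of the hypothesis $d(\xi_1,\eta)\geq 4\bd\kappa\,d(\xi_1,\xi_2)=4\bd\kappa r$ in \eqref{eq:meanvalueGamma} when $\bd\kappa>1$; this is fixed by taking $R=4\bd\kappa^2 r$ or by defining the split through $d(\xi_1,\eta)$. Second, the inequality $\omega(a+b)\le\omega(2a)+\omega(2b)$ you invoke for $|c_1(s)-c_2(s)|\le c\,\omega_{f,S_T}(cr)$ is unnecessary: since $r+\sqrt{t-s}\le r+R\le c'r$ on the relevant $s$-range, monotonicity of $\omega_{f,S_T}$ already gives the bound directly. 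Neither issue affects the validity of the argument.
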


\begin{proof}
The proof of this theorem is similar to that of \cite[Thm.\,3.17]{BB}, where
the Authors deal with the particular case of functions $f\in\mathcal{D}%
(S_{T})$ such that
\[
\omega_{f,S_{T}}(r)\leq c\,r^{\alpha}\quad\text{for some $\alpha\in(0,1)$}%
\]
(that is, $f$ is partially H\"{o}lder continuous w.r.t.\thinspace$x$,
uniformly in $t$). We then limit ourselves to sketch the argument exploited in
\cite{BB}, but we highlight how the main estimates have to be modified in our
more general context. \vspace*{0.05cm} Let $f\in\mathcal{D}(\tau;T)$ be
arbitrarily fixed. Since $f(\cdot,t)\equiv0$ for all $t\leq\tau$, we clearly
have $T_{ij}f(x,t)=0$ for all $x\in\mathbb{R}^{N}$ and $t\leq\tau$. Thus, it
is readily seen that
\begin{equation}
\Vert T_{ij}f\Vert_{L^{\infty}(S_{T})}=\Vert T_{ij}f\Vert_{L^{\infty}(\Omega
)}\quad\text{and}\quad\omega_{T_{ij}f,S_{T}}\equiv\omega_{T_{ij}f,\Omega},
\label{eq:TijsoloOmega}%
\end{equation}
where we have set $\Omega=\RN\times(\tau,T)$. On account of
\eqref{eq:TijsoloOmega}, to prove \eqref{sup bound}-\eqref{Dini bound} it then
suffices to study the function $T_{ij}f(x,t)$ only for $(x,t)\in\Omega$.
\vspace*{0.05cm} As regards \eqref{sup bound} we observe that, by Proposition
\ref{prop:Analoga313}, we have
\begin{align*}
&  |T_{ij}f(x,t)|\leq\int_{\mathbb{R}^{N}\times(\tau,t)}|\partial_{x_{i}x_{j}%
}^{2}\Gamma(x,t;y,s)|\cdot|f(E(s-t)x,s)-f(y,s)|\,dy\,ds\\
&  \quad\leq\int_{\mathbb{R}^{N}\times(\tau,t)}|\partial_{x_{i}x_{j}}%
^{2}\Gamma(x,t;y,s)|\cdot\omega_{f,S_{T}}(\Vert E(s-t)x-y\Vert)\,dy\,ds\\
&  \quad\leq c\,\mathcal{U}_{f,S_{T}}^{\mu}(\sqrt{t-\tau})\leq c\,\mathcal{U}%
_{f,S_{T}}^{\mu}(\sqrt{T-\tau})\quad\forall\,\,(x,t)\in\Omega,
\end{align*}
where $c,\mu>0$ are structural constants. Hence,
\[
\Vert T_{ij}f\Vert_{L^{\infty}(S_{T})}\leq c\,\mathcal{U}_{f,S_{T}}^{\mu
}(\sqrt{T-\tau}).
\]
We then turn to prove \eqref{Dini bound}. To begin with, we arbitrarily fix
$r>0$ and we let $\xi_{1}=(x_{1},t),\,\xi_{2}=(x_{2},t)\in\Omega$ be such that
$d(\xi_{1},\xi_{2})=\Vert x_{1}-x_{2}\Vert\leq r$. Using the compact notation
$\eta=(y,s)$, we write
\begin{equation}%
\begin{split}
&  T_{ij}f(x_{1},t)-T_{ij}f(x_{2},t)\\
&  \qquad=\int_{\mathbb{R}^{N}\times(\tau,t)}\Big\{\partial_{x_{i}x_{j}}%
^{2}\Gamma(x_{1},t;y,s)\big[f(E(s-t)x_{1},s)-f(y,s)\big]\\
&  \qquad\qquad\quad-\partial_{x_{i}x_{j}}^{2}\Gamma(x_{2}%
,t;y,s)\big[f(E(s-t)x_{2},s)-f(y,s)\big]\Big\}\,dy\,ds\\
&  \qquad=\int_{\{\eta:\,d(\xi_{2},\eta)\geq4\bd{\kappa}d(\xi_{2},\xi_{1}%
)\}}\{\cdots\}\,dy\,ds\\
&  \qquad\qquad\quad+\int_{\{\eta:\,d(\xi_{2},\eta)<4\bd{\kappa}d(\xi_{2}%
,\xi_{1})\}}\{\cdots\}\,dy\,ds\\[0.1cm]
&  \qquad=:\mathrm{A}_{1}+\mathrm{A}_{2},
\end{split}
\label{eq:spliTijSTART}%
\end{equation}
where $\bd\kappa>0$ is as in \eqref{eq:quasitriangled}-\eqref{eq:quasisymd}.
We then turn to estimate $\mathrm{A}_{1}$ and $\mathrm{A}_{2}$.

\noindent-\thinspace\thinspace\textsc{Estimate of $\mathrm{A}_{1}$.} To begin
with, we write $\mathrm{A}_{1}$ as follows:
\begin{align*}
\mathrm{A}_{1}  &  =\int_{\{\eta:\,d(\xi_{2},\eta)\geq4\bd{\kappa}d(\xi
_{2},\xi_{1})\}}\Big\{\big[f(E(s-t)x_{1},s)-f(y,s)\big]\times\\[0.1cm]
&  \qquad\qquad\times\big[\partial_{x_{i}x_{j}}^{2}\Gamma(x_{1}%
,t;y,s)-\partial_{x_{i}x_{j}}^{2}\Gamma(x_{2}%
,t;y,s)\big]\Big\}\,dy\,ds\\[0.15cm]
&  \qquad+\int_{\{\eta:\,d(\xi_{2},\eta)\geq4\bd{\kappa}d(\xi_{2},\xi_{1}%
)\}}\Big\{\partial_{x_{i}x_{j}}^{2}\Gamma(x_{2},t;y,s)\times\\
&  \qquad\qquad\times\big[f(E(s-t)x_{1},s)-f(E(s-t)x_{2}%
,s)\big]\Big\}\,dy\,ds\\[0.1cm]
&  =:\mathrm{A}_{11}+\mathrm{A}_{12}.
\end{align*}

\noindent-\thinspace\thinspace\emph{Estimate of $\mathrm{A}_{11}$}. By using
the mean value i\-ne\-qua\-li\-ties \eqref{eq:meanvalueGamma} in Theorem
\ref{thm:finepropGamma}, together with Lemma \ref{lem:equivalentd},
\eqref{eq:quasisymd} and the expression of $d$ in \eqref{eq:explicitd}, we
have
\begin{align*}
|\mathrm{A}_{11}|  &  \leq c\int_{\{\eta:\,d(\xi_{2},\eta)\geq
4\bd{\kappa}d(\xi_{2},\xi_{1})\}}\frac{d(\xi_{2},\xi_{1})}{d(\xi_{2}%
,\eta)^{Q+3}}\cdot\omega_{f,S_{T}}(\Vert E(s-t)x_{1}-y\Vert)\,dyds\\
&  \leq c\,d(\xi_{2},\xi_{1})\int_{\{\eta:\,d(\xi_{2},\eta)\geq
4\bd{\kappa}d(\xi_{2},\xi_{1})\}}\frac{\omega_{f,S_{T}}(d(\eta,\xi_{1}%
))}{d(\xi_{2},\eta)^{Q+3}}\,dyds\\
&  \leq c\,d(\xi_{2},\xi_{1})\int_{\{\eta:\,d(\xi_{2},\eta)\geq
4\bd{\kappa}d(\xi_{2},\xi_{1})\}}\frac{\omega_{f,S_{T}}(c\,d(\xi_{2},\eta
))}{d(\xi_{2},\eta)^{Q+3}}\,dyds
\end{align*}
From this, by applying Lemma \ref{lem:doublingomega} (and since $f\in
\mathcal{D}(S_{T})$), we obtain
\begin{equation}
|\mathrm{A}_{11}|\leq c\,d(\xi_{2},\xi_{1})\int_{8\bd\kappa d(\xi_{2},\xi
_{1})}^{\infty}\frac{\omega_{f,S_{T}}(cs)}{s^{2}}\,ds\leq cr\int_{cr}^{\infty
}\frac{\omega_{f,S_{T}}(s)}{s^{2}}\,ds, \label{eq:estimA11finalSpace}%
\end{equation}
where $c>0$ is a structural constant. We explicitly mention that, in the above
estimate, we have also used the monotonicity of the map
\[
r\mapsto\mathcal{M}_{2,f,S_{T}}(r)=r\int_{r}^{\infty}\frac{\omega_{f,S_{T}%
}(s)}{s^{2}}\,ds
\]
proved in Lemma \ref{lem:generalMN}, jointly with the fact that
\[
d(\xi_{2},\xi_{1})=\Vert x_{1}-x_{2}\Vert\leq r.
\]

\noindent-\thinspace\thinspace\emph{Estimate of $\mathrm{A}_{12}$}. Arguing as
in \cite{BB}, by using Lemma \ref{lem:stimaEx} we have
\begin{equation}%
\begin{split}
|\mathrm{A}_{12}|  &  \leq\int_{\tau}^{t}\big|f(E(s-t)x_{1},s)-f(E(s-t)x_{2}%
,s)\big|\cdot\mathcal{J}(s)\,ds\\
&  \leq\int_{\tau}^{t}\omega_{f,S_{T}}(\Vert E(s-t)(x_{1}-x_{2})\Vert
)\cdot\mathcal{J}(s)\,ds\\
&  \leq\int_{\tau}^{t}\omega_{f,S_{T}}\big(c(\Vert x_{1}-x_{2}\Vert+\sqrt
{t-s})\big)\cdot\mathcal{J}(s)\,ds,
\end{split}
\label{eq:tostartperdistinguere}%
\end{equation}
where $c>0$ is a structural constant and
\[
\mathcal{J}(s):=\bigg|\int_{\{y\in\mathbb{R}^{N}:\,d(\xi_{2},(y,s))\geq
4\bd{\kappa}d(\xi_{2},\xi_{1})\}}\partial_{x_{i}x_{j}}^{2}\Gamma
(x_{2},t;y,s)\,dy\bigg|.
\]
From this, since $\omega_{f,S_{T}}$ shares with the map $r\mapsto r^{\alpha}$
the same monotonicity, we can proceed \emph{exactly} as in the proof of
\cite[Thm.\,3.17]{BB}, obtaining
\begin{equation}
|A_{12}|\leq c\,\omega_{f,S_{T}}(c\Vert x_{1}-x_{2}\Vert)\leq c\,\omega
_{f,S_{T}}(cr). \label{eq:estimA12Space}%
\end{equation}
for a suitable structural constant $c>0$.

Summing up, by combining \eqref{eq:estimA11finalSpace} with
\eqref{eq:estimA12Space}, we conclude that
\begin{equation}
|\mathrm{A}_{1}|\leq c\,\Big\{\omega_{f,S_{T}}(cr)+cr\int_{cr}^{\infty}%
\frac{\omega_{f,S_{T}}(s)}{s^{2}}\,ds\Big\}, \label{eq:estimA1FINALSpace}%
\end{equation}
for a suitable structural constant $c>0$. \medskip

\noindent-\,\,\textsc{Estimate of $A_{2}$.} First of all, we write
\begin{equation}
|\mathrm{A}_{2}|\leq\mathrm{A}_{21}+\mathrm{A}_{22},
\label{eq:estimA2splittedSpace}%
\end{equation}
where, for $k=1,2$, we have introduced the notation
\begin{align*}
\mathrm{A}_{2k}  &  :=\int_{\{\eta:\,d(\xi_{2},\eta)<4\bd{\kappa}d(\xi_{2}%
,\xi_{1})\}}|\partial_{x_{i}x_{j}}^{2}\Gamma(x_{k},t;y,s)|\times\\
&  \qquad\quad\times\omega_{f,S_{T}}(\Vert E(s-t)x_{k}-y\Vert)\,dy\,ds.
\end{align*}
We then proceed by estimating the two integrals $\mathrm{A}_{21}%
,\,\mathrm{A}_{22}$ separately. \vspace*{0.1cm}

\noindent-\thinspace\thinspace\emph{Estimate of $\mathrm{A}_{21}$}. By
exploiting the estimates for $\partial_{x_{i}x_{j}}^{2}\Gamma$ in Theo\-rem
\ref{thm:finepropGamma}-(1), together with
\eqref{eq:quasitriangled}-\eqref{eq:quasisymd} and the expression of $d$, we
have
\begin{align*}
\mathrm{A}_{21}  &  \leq c\,\int_{\{\eta:\,d(\xi_{2},\eta)<4\bd{\kappa}d(\xi
_{2},\xi_{1})\}}\frac{\omega_{f,S_{T}}(\Vert E(s-t)x_{1}-y\Vert)}{d(\xi
_{1},\eta)^{Q+2}}\,dy\,ds\\
&  \leq c\,\int_{\{\eta:\,d(\xi_{2},\eta)<4\bd{\kappa}d(\xi_{2},\xi_{1}%
)\}}\frac{\omega_{f,S_{T}}(d(\eta,\xi_{1}))}{d(\xi_{1},\eta)^{Q+2}}\,dy\,ds\\
&  \leq c\,\int_{\{\eta:\,d(\xi_{2},\eta)<4\bd{\kappa}d(\xi_{2},\xi_{1}%
)\}}\frac{\omega_{f,S_{T}}(\bd\kappa d(\xi_{1},\eta))}{d(\xi_{1},\eta)^{Q+2}%
}\,dy\,ds\\[0.05cm]
&  \big(\text{since $d(\xi_{2},\eta)<4\bd{\kappa}d(\xi_{2},\xi_{1}%
)\,\Rightarrow\,d(\xi_{1},\eta)<c\,d(\xi_{2},\xi_{1})$}\big)\\
&  \leq c\,\int_{\{\eta:\,d(\xi_{1},\eta)<cd(\xi_{2},\xi_{1})\}}\frac
{\omega_{f,S_{T}}(\bd\kappa d(\xi_{1},\eta))}{d(\xi_{1},\eta)^{Q+2}}\,dy\,ds,
\end{align*}
where $c>0$ is a suitable structural constant. From this, using once again
Lemma \ref{lem:doublingomega}, we obtain
\begin{equation}
\mathrm{A}_{21}\leq c\,\int_{0}^{2cd(\xi_{2},\xi_{1})}\frac{\omega_{f,S_{T}%
}(\bd\kappa s)}{s}\,ds\leq c\,\int_{0}^{cr}\frac{\omega_{f,S_{T}}(s)}{s}\,ds.
\label{eq:estimA21finalSpace}%
\end{equation}

\noindent-\thinspace\thinspace\emph{Estimate of $\mathrm{A}_{22}$}. By
proceeding exactly as in the estimate of $\mathrm{A}_{21}$ (but without the
need of enlarging the domain of integration), we obtain
\begin{equation}%
\begin{split}
\mathrm{A}_{22}  &  \leq c\,\int_{\{\eta:\,d(\xi_{2},\eta)<4\bd{\kappa}d(\xi
_{2},\xi_{1})\}}\frac{\omega_{f,S_{T}}(\Vert E(s-t)x_{2}-y\Vert)}{d(\xi
_{2},\eta)^{Q+2}}\,dy\,ds\\
&  \leq c\,\int_{\{\eta:\,d(\xi_{2},\eta)<4\bd{\kappa}d(\xi_{2},\xi_{1}%
)\}}\frac{\omega_{f,S_{T}}(\bd\kappa d(\xi_{2},\eta))}{d(\xi_{2},\eta)^{Q+2}%
}\,dy\,ds\\
&  \leq c\,\int_{0}^{8\bd\kappa d(\xi_{2},\xi_{1})}\frac{\omega_{f,S_{T}%
}(\bd\kappa s)}{s}\,ds\leq c\,\int_{0}^{cr}\frac{\omega_{f,S_{T}}(s)}{s}\,ds.
\end{split}
\label{eq:estimA22finalSpace}%
\end{equation}
Summing up, by combining
\eqref{eq:estimA21finalSpace}-\eqref{eq:estimA22finalSpace} with
\eqref{eq:estimA2splittedSpace}, we conclude that
\begin{equation}
|\mathrm{A}_{2}|\leq c\,\int_{0}^{cr}\frac{\omega_{f,S_{T}}(s)}{s}\,ds,
\label{eq:estimA2FINALSpace}%
\end{equation}
where $c>0$ is a suitable structural constant.

Now we have estimated $\mathrm{A}_{1}$ and $\mathrm{A}_{2}$, we are ready to
complete the proof: in fact, gathering
\eqref{eq:estimA1FINALSpace}-\eqref{eq:estimA2FINALSpace}, and recalling
\eqref{eq:spliTijSTART}, we conclude that
\begin{align*}
&  |T_{ij}f(x_{1},t)-T_{ij}f(x_{2},t)|\leq|\mathrm{A}_{1}|+|\mathrm{A}_{2}|\\
&  \qquad\leq c\,\Big\{\omega_{f,S_{T}}(cr)+\int_{0}^{cr}\frac{\omega
_{f,S_{T}}(s)}{s}\,ds+cr\int_{cr}^{\infty}\frac{\omega_{f,S_{T}}(s)}{s^{2}%
}\,ds\Big\}\\
&  \qquad=c\,\mathcal{M}_{f,S_{T}}(cr)\quad\text{$\forall\,\,(x_{1}%
,t),\,(x_{2},t)\in\Omega$ with $\Vert x_{1}-x_{2}\Vert\leq r$},
\end{align*}
from which we readily obtain the desired \eqref{Dini bound}.
\end{proof}

\medskip

Thanks to Theorem \ref{Thm Dini sing int}, we can finally prove the following theorem.

\begin{theorem}
[Moduli of continuity of derivatives]\label{Thm Dini coeff t} Let Let
$T>\tau>-\infty$. Then, there exists structural constants $c,\mu>0$, such
that
\begin{equation}
\begin{gathered} \sum_{i,j=1}^{m_0}\Vert\de_{x_{i}x_{j}}^{2}u\Vert_{L^\infty(S_T)}\leq c\,\mathcal{U}^\mu_{\LL u,S_T}(\sqrt{T-\tau}),\\[0.1cm] \omega_{\de^2_{x_ix_j}u,S_T}(r)\leq c\,\mathcal{M}_{\LL u,S_T}(cr)\quad\forall\,\,r > 0. \end{gathered} \label{eq:Schauderspaceaijt}%
\end{equation}
for every $u\in\mathcal{S}^{0}(\tau,T)$ with $\mathcal{L}u\in\mathcal{D}%
(S_{T})$. Moreover, we have
\begin{equation}
\begin{gathered} \|Yu\|_{L^\infty(S_T)} \leq c\,\big(\|\LL u\|_{L^\infty(S_T)}+\mathcal{U}^\mu_{\LL u,S_T}(\sqrt{T-\tau})\big) \\ \omega_{Y u,S_T}(r)\leq c\,\mathcal{M}_{\LL u,S_T}(cr)\quad\forall\,\,r > 0. \end{gathered} \label{eq:SchauderspaceaijtDRIFT}%
\end{equation}
In particular, if $\mathcal{L}u\in\mathcal{D}_{\log}(S_{T})$ we have
$\partial_{x_{i}x_{j}}^{2}u\,,Yu\in\mathcal{D}(S_{T})$.
\end{theorem}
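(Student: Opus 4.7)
The plan is to treat this theorem essentially as an assembly step: all the heavy work is already contained in Theorem \ref{Corollary repr formula uxx} (the pointwise representation formula for $\partial^2_{x_ix_j}u$) and in Theorem \ref{Thm Dini sing int} (the $L^\infty$ and modulus-of-continuity bounds for the singular integral operator $T_{ij}$). Since a function $u\in\mathcal{S}^{0}(\tau,T)$ has $\mathrm{supp}(u)\subset\mathbb{R}^{N}\times(\tau,T)$, the right-hand side $\mathcal{L}u$ vanishes for $t\le\tau$, so $\mathcal{L}u$ belongs to the space $\mathcal{D}(\tau;T)$ appearing in Theorem \ref{Thm Dini sing int}.

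First I would apply Theorem \ref{Corollary repr formula uxx} to get, for $1\le i,j\le q$, the identity
\[
\partial^{2}_{x_{i}x_{j}}u(x,t)=T_{ij}(\mathcal{L}u)(x,t)\qquad\forall\,(x,t)\in S_{T},
\]
so that estimates \eqref{sup bound} and \eqref{Dini bound} of Theorem \ref{Thm Dini sing int}, applied to $f=\mathcal{L}u$, yield at once the two bounds in \eqref{eq:Schauderspaceaijt}. Next, for the drift, I rearrange the model equation \eqref{eq:LLsolot} to read
\[
Yu\;=\;\mathcal{L}u\;-\;\sum_{i,j=1}^{q}a_{ij}(t)\,\partial^{2}_{x_{i}x_{j}}u.
\]
Since $|a_{ij}(t)|\le\nu^{-1}$ by (H1), the $L^{\infty}$-bound in \eqref{eq:SchauderspaceaijtDRIFT} is immediate from the one already established for $\partial^{2}_{x_{i}x_{j}}u$. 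For the modulus-of-continuity bound, the key observation is that $a_{ij}$ depends \emph{only on $t$}, so for any points $(x,t),(y,t)$ with $\|x-y\|\le r$ we have
\[
|a_{ij}(t)\partial^{2}_{x_{i}x_{j}}u(x,t)-a_{ij}(t)\partial^{2}_{x_{i}x_{j}}u(y,t)|\le \nu^{-1}\,\omega_{\partial^{2}_{x_{i}x_{j}}u,S_{T}}(r),
\]
so that the spatial modulus of $a_{ij}\,\partial^{2}_{x_{i}x_{j}}u$ is controlled by a constant times that of $\partial^{2}_{x_{i}x_{j}}u$. Combining with the trivial pointwise bound $\omega_{\mathcal{L}u,S_{T}}(r)\le\mathcal{M}_{\mathcal{L}u,S_{T}}(r)$ (evident from the definition \eqref{eq:defMProp}) and the already-obtained estimate on $\omega_{\partial^{2}_{x_{i}x_{j}}u,S_{T}}$ gives the second line of \eqref{eq:SchauderspaceaijtDRIFT}.

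Finally, the assertion that $\partial^{2}_{x_{i}x_{j}}u,Yu\in\mathcal{D}(S_{T})$ when $\mathcal{L}u\in\mathcal{D}_{\log}(S_{T})$ follows from Proposition \ref{prop:funMN}: under the log-Dini hypothesis on $\mathcal{L}u$, the function $\mathcal{M}_{\mathcal{L}u,S_{T}}$ is itself a Dini continuity modulus, so integrating $\omega_{\partial^{2}_{x_{i}x_{j}}u,S_{T}}(r)/r\le c\,\mathcal{M}_{\mathcal{L}u,S_{T}}(cr)/r$ from $0$ to $1$ (after the change of variables $s=cr$) produces a finite quantity, and likewise for $Yu$. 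The ``main obstacle'' is not really located in this theorem at all: it sits upstream, in the sharp singular-integral analysis of $T_{ij}$ carried out in Theorem \ref{Thm Dini sing int} (splitting the kernel according to $d(\xi_{2},\eta)\gtrless 4\boldsymbol{\kappa}\,d(\xi_{1},\xi_{2})$, exploiting the mean-value estimate \eqref{eq:meanvalueGamma}, and invoking Lemma \ref{lem:doublingomega}); once that is in hand, the present theorem is a clean corollary whose only novelty is recognizing that $t$-dependence alone of $a_{ij}$ makes multiplication by $a_{ij}$ harmless for spatial moduli.
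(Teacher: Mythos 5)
Your proposal is correct and follows essentially the same route as the paper: write $\partial^2_{x_ix_j}u=T_{ij}(\mathcal{L}u)$ via Theorem \ref{Corollary repr formula uxx}, quote Theorem \ref{Thm Dini sing int} for \eqref{eq:Schauderspaceaijt}, then isolate $Yu$ from the equation and exploit the $t$-only dependence of $a_{ij}$ (so multiplying by $a_{ij}(t)$ only costs an $L^\infty$ factor in the spatial modulus), and conclude the Dini-continuity claim from Proposition \ref{prop:funMN}. This matches the paper's argument step for step; your use of the bound $|a_{ij}(t)|\le\nu^{-1}$ from (H1) in place of the quantity $A$ in \eqref{a cost Dini} is an inessential and if anything slightly cleaner choice.
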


\begin{proof}
Let $u\in\mathcal{S}^{0}(\tau;T)$ be such that $\mathcal{L}u\in\mathcal{D}%
(S_{T})$. By applying the representation formula \eqref{repr formula u_xx}, we
can write
\begin{align*}
\partial_{x_{i}x_{j}}^{2}u(x,t)  &  =\int_{\mathbb{R}^{N}\times(\tau
,t)}\partial_{x_{i}x_{j}}^{2}\Gamma(x,t;y,s)\cdot\big[\mathcal{L}%
u(E(s-t)x,s)-\mathcal{L}u(y,s)\big]\,dy\,ds\\[0.1cm]
&  =T_{ij}(\mathcal{L}u)(x,t)\qquad\text{for every $(x,t)\in S_{T}$ and $1\leq
i,j\leq m_{0}$},
\end{align*}
where $T_{ij}$ is as in Theorem \ref{Thm Dini sing int}. Then, from
\eqref{sup bound}-\eqref{Dini bound} we obtain \eqref{eq:Schauderspaceaijt}.
On the other hand, using the definition of $\mathcal{L}$, and recalling that
the coefficients $a_{ij}(\cdot)$ are bounded and \emph{independent of $x$},
from \eqref{eq:Schauderspaceaijt} we also get
\begin{align*}
\Vert Yu\Vert_{L^{\infty}(S_{T})}  &  =\Big\|\mathcal{L}u-\sum_{i,j=1}^{m_{0}%
}a_{ij}\,\partial_{x_{i}x_{j}}^{2}u\Big\|_{L^{\infty}(S_{T})}\\
&  \leq c\big(\Vert\mathcal{L}u\Vert_{L^{\infty}(S_{T})}+\mathcal{U}%
_{\mathcal{L}u,S_{T}}^{\mu}(\sqrt{T-\tau})\big);
\end{align*}
analogously, if $A$ is as in \eqref{a cost Dini}, we obtain
\[
\omega_{Yu,S_{T}}(r)\leq\omega_{\mathcal{L}u}(r)+A\sum_{i,j=1}^{m_{0}}%
\omega_{\partial_{x_{i}x_{j}}^{2}u}(r)\leq c\,\mathcal{M}_{\mathcal{L}u}(cr).
\]
This is precisely \eqref{eq:SchauderspaceaijtDRIFT}. Finally, the Dini
continuity of the functions $\partial_{x_{i}x_{j}}^{2}u,\,Yu$, under the
additional assumption $\mathcal{L}u\in\mathcal{D}_{\mathrm{log}}(S_{T})$,
immediately follows from Proposition \ref{prop:funMN}.
\end{proof}

\medskip

We end this section with a weaker version of Theorem \ref{Thm main time} for
operators with coefficients only depending on $t$; we will use this result {to
prove} Theorem \ref{Thm main time}.

\begin{theorem}
[Continuity estimates in space-time]\label{Thm local continuity time}

Let $\mathcal{L}$ be as in \eqref{eq:LLsolot}, and let $T>\tau>-\infty$.
Moreover, let $K\subseteq\mathbb{R}^{N}$ be a \emph{compact set}.

Then, there exist a structural constant $\mu>0$ and a constant ${c}%
(K,\tau,T)>0$ such that, for every $u\in\mathcal{S}^{0}(\tau;T)$ such that
$\mathcal{L}u\in\mathcal{D}(S_{T})$, one has
\begin{equation}%
\begin{split}
&  |\partial_{x_{i}x_{j}}^{2}u(x_{1},t_{1})-\partial_{x_{i}x_{j}}^{2}%
u(x_{2},t_{2})|\\
&  \qquad\leq c\big\{\mathcal{M}_{\mathcal{L}u,S_{T}}\big(c(d((x_{1}%
,t_{1}),(x_{2},t_{2}))+|t_{1}-t_{2}|^{1/q_{N}})\big)\\
&  \qquad\qquad+\mathcal{U}_{\mathcal{L}u,S_{T}}^{\mu}(\sqrt{|t_{2}-t_{1}%
|})\big\}
\end{split}
\label{eq:schauderspacetimeuxixj}%
\end{equation}
for every $1\leq i,j\leq m_{0}$ and $(x_{1},t_{1}),(x_{2},t_{2})\in
K\times\lbrack\tau,T]$.

Here, $\mathcal{U}_{\mathcal{L}u,S_{T}}^{\mu}$ is as in \eqref{eq:defUmuIntro}
and $\mathcal{M}_{\mathcal{L}u,S_{T}}$ is as in \eqref{eq:defcontinM};
moreover, $q_{N}\geq3$ is the largest e\-xpo\-nent in the dilations
$D_{0}(\lambda)$, see \eqref{dilations}.

In particular, from \eqref{eq:schauderspacetimeuxixj} we deduce that the
derivatives $\partial_{x_{i}x_{j}}^{2}u$ are locally uniformly continuous in
the joint variables $(x,t)$.
\end{theorem}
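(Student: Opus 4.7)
The plan is to reduce the space-time continuity estimate to two pieces: the purely spatial Dini bound already supplied by Theorem \ref{Thm Dini coeff t}, plus a ``temporal'' increment controlled via the representation formula \eqref{repr formula u_xx} and Proposition \ref{prop:Analoga313}. Assume (without loss of generality) $t_1\le t_2$, and introduce the two auxiliary points $\xi'=(x_2,t_1)$ and $\xi''=(E(t_1-t_2)x_2,t_1)$. Decompose
\begin{equation*}
\partial^2_{x_ix_j}u(x_1,t_1)-\partial^2_{x_ix_j}u(x_2,t_2)=A+B+C,
\end{equation*}
with $A=\partial^2_{x_ix_j}u(x_1,t_1)-\partial^2_{x_ix_j}u(\xi')$, $B=\partial^2_{x_ix_j}u(\xi')-\partial^2_{x_ix_j}u(\xi'')$, and $C=\partial^2_{x_ix_j}u(\xi'')-\partial^2_{x_ix_j}u(x_2,t_2)$. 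Both $A$ and $B$ compare two points at the common time $t_1$, so the spatial modulus estimate from Theorem \ref{Thm Dini coeff t} applies directly. For $A$ the spatial gap is $\|x_1-x_2\|$, and the triangle inequality together with Lemma \ref{lem:stimaEx} (valid since $x_2\in K$ and $|t_1-t_2|\le T-\tau$) gives $\|x_1-x_2\|\le d((x_1,t_1),(x_2,t_2))+c|t_1-t_2|^{1/q_N}$; for $B$, the same lemma yields $\|x_2-E(t_1-t_2)x_2\|\le c|t_1-t_2|^{1/q_N}$. By monotonicity of $\mathcal{M}_{\mathcal{L}u,S_T}$, this produces the first summand on the right-hand side of \eqref{eq:schauderspacetimeuxixj}.

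For $C$ we apply the representation formula \eqref{repr formula u_xx} at both $\xi''$ and $(x_2,t_2)$. The algebraic observation $E(s-t_1)E(t_1-t_2)=E(s-t_2)$ (a consequence of $E$ being a one-parameter group) shows that the ``frozen'' argument $E(s-t)x$ evaluates to the same point $E(s-t_2)x_2$ in both formulas, so the two representations share the \emph{same} increment $\mathcal{L}u(E(s-t_2)x_2,s)-\mathcal{L}u(y,s)$. Splitting the $s$-integration for $(x_2,t_2)$ as $(\tau,t_1)\cup(t_1,t_2)$, we obtain $C=C_1+C_2$, where $C_1$ is an integral on the thin slice $\mathbb{R}^N\times(t_1,t_2)$ with kernel $\partial^2_{x_ix_j}\Gamma(x_2,t_2;\cdot)$, and $C_2$ is an integral on $\mathbb{R}^N\times(\tau,t_1)$ with the kernel difference $\partial^2_{x_ix_j}\Gamma(x_2,t_2;y,s)-\partial^2_{x_ix_j}\Gamma(\xi'';y,s)$. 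A direct application of Proposition \ref{prop:Analoga313} (with $\tau$ replaced by $t_1$) gives $|C_1|\le c\,\mathcal{U}^\mu_{\mathcal{L}u,S_T}(\sqrt{t_2-t_1})$, which is exactly the second summand of \eqref{eq:schauderspacetimeuxixj}.

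The main obstacle is the kernel-difference term $C_2$: the two base points differ both in $t$ (by $t_2-t_1$) and in $x$ (the drift $\|E(t_1-t_2)x_2-x_2\|\le c|t_1-t_2|^{1/q_N}$), and one must exploit both variations simultaneously. Starting from the explicit expression \eqref{eq.exprGammapernoi} and using the factorisation $E(t_1-s)y-E(t_1-t_2)x_2=E(t_1-t_2)\bigl(E(t_2-s)y-x_2\bigr)$ together with the homogeneity \eqref{LP 2.20} and the Lipschitz-in-$t$ property of $\partial^{\bd\alpha}_x\Gamma$ recorded in Theorem \ref{Thm fund sol coeff t dip}(1), a mean-value-in-$t$ argument applied to $\partial^2_{x_ix_j}\Gamma$ produces the bound $|\partial^2_{x_ix_j}\Gamma(x_2,t_2;y,s)-\partial^2_{x_ix_j}\Gamma(\xi'';y,s)|\le c\,(t_2-t_1)/d(\xi'',(y,s))^{Q+4}$ in the far regime $d(\xi'',(y,s))\ge 2\bd{\kappa}\sqrt{t_2-t_1}$, to be combined with the pointwise estimate $c/d(\xi'',(y,s))^{Q+2}$ from Theorem \ref{thm:finepropGamma}(1) in the near regime. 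Inserting this dichotomy into $C_2$ and invoking Lemma \ref{lem:doublingomega} yields $|C_2|\le c\bigl(\mathcal{M}_{\mathcal{L}u,S_T}(c\sqrt{t_2-t_1})+\mathcal{U}^\mu_{\mathcal{L}u,S_T}(\sqrt{t_2-t_1})\bigr)$, both summands being absorbed into the right-hand side of \eqref{eq:schauderspacetimeuxixj} by monotonicity and the elementary inequality $\sqrt{t_2-t_1}\le(T-\tau)^{1/2-1/q_N}\,|t_2-t_1|^{1/q_N}$ (valid since $q_N\ge 3$). The final claim of local uniform continuity of $\partial^2_{x_ix_j}u$ in $(x,t)$ then follows from $\mathcal{M}_{\mathcal{L}u,S_T}(r),\mathcal{U}^\mu_{\mathcal{L}u,S_T}(r)\to 0$ as $r\to 0^+$, guaranteed by Proposition \ref{prop:funMN} and Lemma \ref{lem:Ufwelldef}.
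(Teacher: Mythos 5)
Your decomposition is genuinely different from the paper's, and the auxiliary point $\xi''=(E(t_1-t_2)x_2,t_1)$ is an attractive device: since $(x_2,t_2)^{-1}\circ\xi''=(0,t_1-t_2)$ one has $d(\xi'',(x_2,t_2))=\sqrt{t_2-t_1}$, and your observation that $E(s-t_1)E(t_1-t_2)=E(s-t_2)$ makes the two ``frozen'' arguments in \eqref{repr formula u_xx} coincide. This removes the term that in the paper's proof (which compares $(x,t_1)$ with $(x,t_2)$ directly) produces $\mathrm{A}_{12}$, the only piece requiring the cancellation property of $\mathcal{J}(s)$.

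However, your treatment of $C_2$ has a genuine gap. You assert the kernel bound $|\partial^2_{x_ix_j}\Gamma(x_2,t_2;y,s)-\partial^2_{x_ix_j}\Gamma(\xi'';y,s)|\le c\,(t_2-t_1)/d(\xi'',(y,s))^{Q+4}$ ``from the Lipschitz-in-$t$ property of $\partial^{\bd\alpha}_x\Gamma$ in Theorem \ref{Thm fund sol coeff t dip}(1) and the homogeneity \eqref{LP 2.20}.'' None of these ingredients delivers that bound. The Lipschitz-in-$t$ statement of Theorem \ref{Thm fund sol coeff t dip}(1) is purely qualitative: it holds in regions $\{s+\delta\le t\}$ with a constant that is not controlled as $\delta\to0$, so it gives no decay in $d$. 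The homogeneity \eqref{LP 2.20} concerns the matrix $E(t)$, and it does \emph{not} transfer to $\Gamma$ when the coefficients $a_{ij}(t)$ are merely bounded measurable (only the constant-coefficient kernel $\Gamma_\alpha$ is homogeneous, cf.~\eqref{eq.exprGammaalfa}). A mean-value in $t$ at fixed first spatial argument would also compare $\Gamma(x_2,t_2;\cdot)$ with $\Gamma(x_2,t_1;\cdot)$, not with $\Gamma(E(t_1-t_2)x_2,t_1;\cdot)$ as you need, so a second spatial mean-value step would be required. Finally, even granting your bound, the exponent $Q+4$ is not covered by Lemma \ref{lem:doublingomega}, which only treats $Q+3$ (in \eqref{Lemma A}) and $Q+2$ (in \eqref{Lemma B}); you would need a new variant of that lemma.

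The good news is that the correct estimate is already in the paper and fits your scheme better than what you wrote: apply the mean-value inequality \eqref{eq:meanvalueGamma} of Theorem \ref{thm:finepropGamma}(2) to the pair $\xi_1=\xi''$, $\xi_2=(x_2,t_2)$, whose base points differ in both $x$ and $t$ and satisfy $d(\xi'',\xi_2)=\sqrt{t_2-t_1}$. In the far region $\{d(\xi'',\eta)\ge4\bd{\kappa}\sqrt{t_2-t_1}\}$ this gives the bound $c\sqrt{t_2-t_1}\big/d(\xi'',\eta)^{Q+3}$, which plugs directly into \eqref{Lemma A}; the near region $\{d(\xi'',\eta)<4\bd{\kappa}\sqrt{t_2-t_1}\}$ is handled, exactly as for $\mathrm{A}_2$ in the paper, by the pointwise bound \eqref{eq:mainestim} with exponent $Q+2$ and \eqref{Lemma B}. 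This produces $|C_2|\le c\,\mathcal{M}_{\mathcal{L}u,S_T}(c\sqrt{t_2-t_1})$, which is absorbed into the right-hand side of \eqref{eq:schauderspacetimeuxixj} by monotonicity (indeed $\sqrt{t_2-t_1}\le c(T-\tau)|t_1-t_2|^{1/q_N}$ for $t_1,t_2\in[\tau,T]$). With this replacement your proof is correct, and the route through $\xi''$ is in fact somewhat cleaner than the paper's, which must treat the extra term $\mathrm{A}_{12}$ via the cancellation of $\mathcal{J}(s)$.
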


\begin{proof}
Let $u\in\mathcal{S}^{0}(\tau;T)$ be such that $\mathcal{L}u\in\mathcal{D}%
(S_{T})$. To prove \eqref{eq:schauderspacetimeuxixj} we first observe that,
owing to Theorem \ref{Thm Dini coeff t}, for every $(x_{1},t),(x_{2},t)\in
S_{T}$ we have
\begin{equation}%
\begin{split}
&  |\partial_{x_{i}x_{j}}^{2}u(x_{1},t)-\partial_{x_{i}x_{j}}^{2}%
u(x_{2},t)|\leq\omega_{\partial_{x_{i}x_{j}}^{2}u}(\Vert x_{1}-x_{2}\Vert)\\
&  \qquad\qquad\leq c\,\mathcal{M}_{\mathcal{L}u,S_{T}}(c\Vert x_{1}%
-x_{2}\Vert)
\end{split}
\label{eq:estimuxixjtfisso}%
\end{equation}
where $c>0$ is a structural constant. As a consequence of
\eqref{eq:estimuxixjtfisso}, and ta\-king into account Lemma \ref{lem:stimaEx}%
, to prove \eqref{eq:schauderspacetimeuxixj} it suffices to show that
\begin{equation}%
\begin{split}
&  |\partial_{x_{i}x_{j}}^{2}u(x,t_{1})-\partial_{x_{i}x_{j}}^{2}u(x,t_{2})|\\
&  \qquad\qquad\leq c\,\big\{\mathcal{M}_{\mathcal{L}u,S_{T}}(c|t_{1}%
-t_{2}|^{1/q_{N}})+\mathcal{U}_{\mathcal{L}u,S_{T}}^{\mu}(\sqrt{|t_{1}-t_{2}%
|})\big\},
\end{split}
\label{eq:toproveuxixjxfissoGENERALE}%
\end{equation}
for every $(x,t_{1}),(x,t_{2})\in K\times\lbrack\tau,T]$, where $c>0$ is a
constant independent of $u$ (but possibly depending on $K,\tau,T$), while
$\mu>0$ is a structural constant. In fact, once
\eqref{eq:toproveuxixjxfissoGENERALE} has been established, by
\eqref{eq:estimuxixjtfisso}-\eqref{eq:toproveuxixjxfissoGENERALE} we get
\begin{align*}
&  |\partial_{x_{i}x_{j}}^{2}u(x_{1},t_{1})-\partial_{x_{i}x_{j}}^{2}%
u(x_{2},t_{2})|\\
&  \qquad\leq|\partial_{x_{i}x_{j}}^{2}u(x_{1},t_{1})-\partial_{x_{i}x_{j}%
}^{2}u(x_{2},t_{1})|+|\partial_{x_{i}x_{j}}^{2}u(x_{2},t_{1})-\partial
_{x_{i}x_{j}}^{2}u(x_{2},t_{2})|\\
&  \qquad\leq c\,\big\{\mathcal{M}_{\mathcal{L}u,S_{T}}(c\Vert x_{1}%
-x_{2}\Vert)+\mathcal{M}_{\mathcal{L}u,S_{T}}(c|t_{1}-t_{2}|^{1/q_{N}})\\
&  \qquad\qquad\qquad+\mathcal{U}_{\mathcal{L}u,S_{T}}^{\mu}(\sqrt
{|t_{1}-t_{2}|})\big\}\\
&  \qquad(\text{by the explicit expression of $d$, see \eqref{eq:explicitd}}%
)\\
&  \qquad=c\,\big\{\mathcal{M}_{\mathcal{L}u,S_{T}}\big(cd((x_{1}%
,t_{1}),(x_{2},t_{1}))\big)+\mathcal{M}_{\mathcal{L}u,S_{T}}(c|t_{1}%
-t_{2}|^{1/q_{N}})\\
&  \qquad\qquad\qquad+\mathcal{U}_{\mathcal{L}u,S_{T}}^{\mu}(\sqrt
{|t_{1}-t_{2}|})\big\}=:(\bigstar);
\end{align*}
from this, using the quasi-triangle inequality \eqref{eq:quasitriangled}
jointly with Lemma \ref{lem:stimaEx}, and recalling that $\mathcal{M}%
_{\mathcal{L}u,S_{T}}$ is non-decreasing, see Proposition \ref{prop:funMN}, we
obtain
\begin{align*}
(\bigstar)  &  \leq c\,\big\{\mathcal{M}_{\mathcal{L}u,S_{T}}\big(c(d((x_{1}%
,t_{1}),(x_{2},t_{2}))+d((x_{2},t_{1}),(x_{2},t_{2})))\big)\\
&  \qquad\qquad+\mathcal{M}_{\mathcal{L}u,S_{T}}(c|t_{1}-t_{2}|^{1/q_{N}%
})+\mathcal{U}_{\mathcal{L}u,S_{T}}^{\mu}(\sqrt{|t_{1}-t_{2}|})\big\}\\
&  =c\,\big\{\mathcal{M}_{\mathcal{L}u,S_{T}}\big(c(d((x_{1},t_{1}%
),(x_{2},t_{2}))+\Vert x_{2}-E(t_{1}-t_{2})x_{2}\Vert+\sqrt{|t_{1}-t_{2}%
|})\big)\\
&  \qquad\qquad+\mathcal{M}_{\mathcal{L}u,S_{T}}(c|t_{1}-t_{2}|^{1/q_{N}%
})+\mathcal{U}_{\mathcal{L}u,S_{T}}^{\mu}(\sqrt{|t_{1}-t_{2}|})\big\}\\
&  (\text{since $|t_{1}-t_{2}|\leq T-\tau$ and $q_{N}\geq3$})\\
&  \leq c\,\big\{\mathcal{M}_{\mathcal{L}u,S_{T}}\big(c(d((x_{1},t_{1}%
),(x_{2},t_{2}))+|t_{1}-t_{2}|^{1/q_{N}})\big)\\
&  \qquad\qquad+\mathcal{M}_{\mathcal{L}u,S_{T}}(c|t_{1}-t_{2}|^{1/q_{N}%
})+\mathcal{U}_{\mathcal{L}u,S_{T}}^{\mu}(\sqrt{|t_{1}-t_{2}|})\big\}\\
&  \leq c\,\big\{\mathcal{M}_{\mathcal{L}u,S_{T}}\big(c(d((x_{1},t_{1}%
),(x_{2},t_{2}))+|t_{1}-t_{2}|^{1/q_{N}})\big)\\
&  \qquad\qquad+\mathcal{U}_{\mathcal{L}u,S_{T}}^{\mu}(\sqrt{|t_{1}-t_{2}%
|})\big\},
\end{align*}
which is exactly \eqref{eq:schauderspacetimeuxixj}. Hence, we turn to prove
\eqref{eq:toproveuxixjxfissoGENERALE}. This can be done adapting several
computations already exploited in the proof of Theorem \ref{Thm Dini coeff t}.
We will point out just the relevant differences.

To begin with, we fix $\xi_{1}=(x,t_{1}),\,\xi_{2}=(x,t_{2})\in K\times
\lbrack\tau,T]$ and we exploit the re\-pre\-sentation formula
\eqref{repr formula u_xx}: assuming, to fix ideas, that $t_{2}\geq t_{1}$ (and
using the compact notation $\eta=(y,s)$), we can write
\begin{equation}%
\begin{split}
&  \partial_{x_{i}x_{j}}^{2}u(x,t_{1})-\partial_{x_{i}x_{j}}^{2}u(x,t_{2})\\
&  \quad=\int_{\mathbb{R}^{N}\times(\tau,t_{1})}\Big\{\partial_{x_{i}x_{j}%
}^{2}\Gamma(x,t_{1};y,s)\big[\mathcal{L}u(E(s-t_{1})x,s)-\mathcal{L}%
u(y,s)\big]\\
&  \qquad\qquad\quad-\partial_{x_{i}x_{j}}^{2}\Gamma(x,t_{2}%
;y,s)\big[\mathcal{L}u(E(s-t_{2})x,s)-\mathcal{L}u(y,s)\big]\Big\}\,dy\,ds\\
&  \qquad-\int_{\mathbb{R}^{N}\times(t_{1},t_{2})}\partial_{x_{i}x_{j}}%
^{2}\Gamma(x,t_{2};y,s)\big[\mathcal{L}u(E(s-t_{2})x,s)-\mathcal{L}%
u(y,s)\big]\,dy\,ds\\
&  \quad=\int_{\{\eta:\,d(\xi_{2},\eta)\geq4\bd{\kappa}d(\xi_{2},\xi_{1}%
)\}}\{\cdots\}\,dy\,ds\\
&  \qquad\qquad+\int_{\{\eta:\,d(\xi_{2},\eta)<4\bd{\kappa}d(\xi_{2},\xi
_{1})\}}\{\cdots\}\,dy\,ds\\
&  \qquad\qquad-\int_{\mathbb{R}^{N}\times(t_{1},t_{2})}\{\cdots
\}\,dy\,ds\\[0.1cm]
&  \quad=:\mathrm{A}_{1}+\mathrm{A}_{2}-\mathrm{A}_{3},
\end{split}
\label{eq:spliuxixjSTART}%
\end{equation}
where $\bd\kappa>0$ is as in \eqref{eq:quasitriangled}-\eqref{eq:quasisymd} We
now turn to estimate $\mathrm{A}_{1},\mathrm{A_{2}}$ and $\mathrm{A}_{3}$.
\vspace*{0.1cm}

\noindent-\thinspace\thinspace\textsc{Estimate of $\mathrm{A}_{1}$.} To begin
with, we write $\mathrm{A}_{1}$ as follows:
\begin{align*}
\mathrm{A}_{1}  &  =\int_{\{\eta:\,d(\xi_{2},\eta)\geq4\bd{\kappa}d(\xi
_{2},\xi_{1})\}}\Big\{\big[\mathcal{L}u(E(s-t_{1})x,s)-\mathcal{L}%
u(y,s)\big]\times\\[0.1cm]
&  \qquad\qquad\times\big[\partial_{x_{i}x_{j}}^{2}\Gamma(x,t_{1}%
;y,s)-\partial_{x_{i}x_{j}}^{2}\Gamma(x,t_{2}%
;y,s)\big]\Big\}\,dy\,ds\\[0.15cm]
&  \qquad+\int_{\{\eta:\,d(\xi_{2},\eta)\geq4\bd{\kappa}d(\xi_{2},\xi_{1}%
)\}}\Big\{\partial_{x_{i}x_{j}}^{2}\Gamma(x,t_{2};y,s)\times\\
&  \qquad\qquad\times\big[\mathcal{L}u(E(s-t_{1})x,s)-\mathcal{L}%
u(E(s-t_{2})x,s)\big]\Big\}\,dy\,ds\\[0.1cm]
&  =:\mathrm{A}_{11}+\mathrm{A}_{12}.
\end{align*}
We then turn to estimate $\mathrm{A}_{11}$ and $\mathrm{A}_{12}$ separately.
\vspace*{0.1cm}

\noindent-\thinspace\thinspace\emph{Estimate of $\mathrm{A}_{11}$}. First of
all, by proceeding \emph{exactly} as in the estimate of $\mathrm{A}_{11}$ in
the proof of Theorem \ref{Thm Dini coeff t}, we get the following estimate
\begin{align*}
|\mathrm{A}_{11}|  &  \leq c\int_{\{\eta:\,d(\xi_{2},\eta)\geq
4\bd{\kappa}d(\xi_{2},\xi_{1})\}}\frac{d(\xi_{2},\xi_{1})}{d(\xi_{2}%
,\eta)^{Q+3}}\cdot\omega_{\mathcal{L}u,S_{T}}(\Vert E(s-t_{1})x-y\Vert
)\,dyds\\
&  \leq c\,d(\xi_{2},\xi_{1})\int_{\{\eta:\,d(\xi_{2},\eta)\geq
4\bd{\kappa}d(\xi_{2},\xi_{1})\}}\frac{\omega_{\mathcal{L}u,S_{T}}(cd(\xi
_{2},\eta))}{d(\xi_{2},\eta)^{Q+3}}\,dyds\\
&  \leq cd(\xi_{2},\xi_{1})\int_{cd(\xi_{2},\xi_{1})}^{\infty}\frac
{\omega_{\mathcal{L}u,S_{T}}(s)}{s^{2}}\,ds,
\end{align*}
where $c>0$ is a structural constant. On the other hand, by exploiting Lemma
\ref{lem:stimaEx} (and since $t_{1},t_{2}\in\lbrack\tau,T]$), we have
\begin{equation}
d(\xi_{2},\xi_{1})=\Vert x-E(t_{2}-t_{1})x\Vert+\sqrt{|t_{2}-t_{1}|}\leq
c\,|t_{2}-t_{1}|^{1/q_{N}}, \label{eq:usoLemma23oa}%
\end{equation}
where $c>0$ is a constant depending on $K,\tau,T$. Hence, we obtain
\begin{equation}
|\mathrm{A}_{11}|\leq c|t_{1}-t_{2}|^{1/q_{N}}\int_{c|t_{1}-t_{2}|^{1/q_{N}}%
}^{\infty}\frac{\omega_{f,S_{T}}(s)}{s^{2}}\,ds. \label{A11y}%
\end{equation}

\noindent-\thinspace\thinspace\emph{Estimate of $\mathrm{A}_{12}$}. First of
all, using once again Lemma \ref{lem:stimaEx} we get
\[%
\begin{split}
|\mathrm{A}_{12}|  &  \leq\int_{\tau}^{t_{1}}\big|\mathcal{L}u(E(s-t_{1}%
)x,s)-\mathcal{L}u(E(s-t_{2})x,s)\big|\cdot\mathcal{J}(s)\,ds\\
&  \leq\int_{\tau}^{t_{1}}\omega_{\mathcal{L}u,S_{T}}(\Vert(E(s-t_{1}%
)-E(s-t_{2}))x\Vert)\cdot\mathcal{J}(s)\,ds\\
&  (\text{since $|s-t_{1}|,|s-t_{2}|\leq T-\tau$ for all $\tau\leq s\leq
t_{1}$})\\
&  \leq\omega_{\mathcal{L}u,S_{T}}(c|t_{1}-t_{2}|^{1/q_{N}})\int_{\tau}%
^{t_{1}}\mathcal{J}(s)\,ds=:(\bigstar)
\end{split}
\]
where $c>0$ is a constant depending on $K,\tau,T$ and
\[
\mathcal{J}(s):=\bigg|\int_{\{y\in\mathbb{R}^{N}:\,d((x,t_{2}),(y,s))\geq
4\bd{\kappa}d(\xi_{2},\xi_{1})\}}\partial_{x_{i}x_{j}}^{2}\Gamma
(x,t_{2};y,s)\,dy\bigg|.
\]
From this, using the cancellation property of $\mathcal{J}$ in
\cite[Thm.\,3.16]{BB}, we obtain
\begin{equation}
(\bigstar)\leq c\,\omega_{\mathcal{L}u,S_{T}}(c|t_{1}-t_{2}|^{1/q_{N}}),
\label{eq:estimA12Scht}%
\end{equation}
for a suitable constant $c>0$ depending on $K,\tau,T$. \vspace*{0.05cm} By
combining \eqref{A11y} with \eqref{eq:estimA12Scht}, we conclude that
\begin{equation}%
\begin{split}
\left\vert \mathrm{A}_{1}\right\vert  &  \leq c\Big\{\omega_{\mathcal{L}%
u,S_{T}}(|t_{1}-t_{2}|^{1/q_{N}})\\
&  \qquad\quad+c|t_{1}-t_{2}|^{1/q_{N}}\int_{c|t_{1}-t_{2}|^{1/q_{N}}}%
^{\infty}\frac{\omega_{f,S_{T}}(s)}{s^{2}}\,ds\Big\},
\end{split}
\label{eq:estimA1FINALScht}%
\end{equation}
for a suitable constant $c>0$ possibly depending on $K,T,\tau$. \medskip

\noindent-\thinspace\thinspace\textsc{Estimate of $A_{2}$.} By proceeding
exactly as in the estimate of $\mathrm{A}_{2}$ in the proof of Theorem
\ref{Thm Dini coeff t}, and by taking into account \eqref{eq:usoLemma23oa}, we
obtain the estimate
\begin{equation}%
\begin{split}
\left\vert \mathrm{A}_{2}\right\vert  &  \leq c\,\Big\{\int_{\{\eta
:\,d(\xi_{2},\eta)<4\bd{\kappa}d(\xi_{2},\xi_{1})\}}\frac{\omega
_{\mathcal{L}u,S_{T}}\left(  \Vert E(s-t_{1})x-y\Vert\right)  }{d(\xi_{1}%
,\eta)^{Q+2}}\,dy\,ds\\
&  \qquad\quad+\int_{\{\eta:\,d(\xi_{2},\eta)<4\bd{\kappa}d(\xi_{2},\xi
_{1})\}}\frac{\omega_{\mathcal{L}u,S_{T}}(\Vert E(s-t_{2})x-y\Vert)}{d(\xi
_{2},\eta)^{Q+2}}\,dy\,ds\Big\}\\
&  \leq c\,\Big\{\int_{\{\eta:\,d(\xi_{1},\eta)<cd(\xi_{2},\xi_{1})\}}%
\frac{\omega_{\mathcal{L}u,S_{T}}(\bd\kappa d(\xi_{1},\eta))}{d(\xi_{1}%
,\eta)^{Q+2}}\,dy\,ds\\
&  \qquad\quad+\int_{\{\eta:\,d(\xi_{2},\eta)<4\bd{\kappa}d(\xi_{2},\xi
_{1})\}}\frac{\omega_{\mathcal{L}u,S_{T}}(\bd\kappa d(\xi_{2},\eta))}%
{d(\xi_{2},\eta)^{Q+2}}\,dy\,ds\Big\}\\
&  \leq c\,\int_{0}^{cd(\xi_{2},\xi_{1})}\frac{\omega_{\mathcal{L}u,S_{T}}%
(s)}{s}\,ds\leq c\,\int_{0}^{c|t_{1}-t_{2}|^{1/q_{N}}}\frac{\omega
_{\mathcal{L}u,S_{T}}(s)}{s}\,ds,
\end{split}
\label{eq:estimA2FINALScht}%
\end{equation}
where $c>0$ is a suitable constant depending on $K,T,\tau$. \medskip

\noindent-\thinspace\thinspace\textsc{Estimate of $\mathrm{A}_{3}$.} Using the
assumption $\mathcal{L}u\in\mathcal{D}(S_{T})$, together with estimate
\eqref{eq:integraldexixjconv} in Proposition \ref{prop:Analoga313}, we
immediately obtain
\begin{equation}%
\begin{split}
|\mathrm{A}_{3}|  &  \leq\int_{\mathbb{R}^{N}\times(t_{1},t_{2})}%
|\partial_{x_{i}x_{j}}^{2}\Gamma(x,t_{2};y,s)|\cdot\omega_{\mathcal{L}u,S_{T}%
}(\Vert E(s-t_{2})x-y\Vert)\,dy\,ds\\
&  \leq c\,\mathcal{U}_{\mathcal{L}u,S_{T}}^{\mu}(\sqrt{|t_{1}-t_{2}|}),
\end{split}
\label{eq:estimA3Scht}%
\end{equation}
where $c,\mu>0$ ore structural constants. \vspace*{0.1cm}

Now we have estimated $\mathrm{A}_{1},\mathrm{A}_{2}$ and $\mathrm{A}_{3}$, we
can complete the proof: in fact, gathering
\eqref{eq:estimA1FINALScht},\eqref{eq:estimA2FINALScht} and
\eqref{eq:estimA3Scht}, and recalling \eqref{eq:spliuxixjSTART}, we conclude
that
\begin{align*}
&  |\partial_{x_{i}x_{j}}^{2}u(x,t_{1})-\partial_{x_{i}x_{j}}^{2}%
u(x,t_{2})|\leq|\mathrm{A}_{1}|+|\mathrm{A}_{2}|+|\mathrm{A}_{3}|\\
&  \qquad\leq c\,\Big\{\omega_{\mathcal{L}u,S_{T}}(c|t_{1}-t_{2}|^{1/q_{N}%
})+c|t_{1}-t_{2}|^{1/q_{N}}\int_{c|t_{1}-t_{2}|^{1/q_{N}}}^{\infty}%
\frac{\omega_{\mathcal{L}u,S_{T}}(s)}{s^{2}}\,ds\\
&  \qquad\qquad\quad+\int_{0}^{c|t_{1}-t_{2}|^{1/q_{N}}}\frac{\omega
_{\mathcal{L}u,S_{T}}(s)}{s}\,ds+\mathcal{U}_{\mathcal{L}u,S_{T}}^{\mu}%
(|t_{1}-t_{2}|^{1/q_{N}})\Big\}\\
&  \qquad=c\big\{\mathcal{M}_{\mathcal{L}u,S_{T}}(c|t_{1}-t_{2}|^{1/q_{N}%
})+\mathcal{U}_{\mathcal{L}u,S_{T}}^{\mu}(\sqrt{|t_{1}-t_{2}|})\big\},
\end{align*}
which is exactly the desired \eqref{eq:toproveuxixjxfissoGENERALE}.
\end{proof}

\section{Operators with coefficients depending on $\left(  x,t\right)  $}

\label{sec:proofMainThm}

\subsection{The basic estimate for functions with small support}

We want to extend our result to operators with coefficients $a_{ij}(x,t)$ Dini
continuous in $x$ and bounded measurable in $t$. The first step is a local
estimate for functions with small compact support. \vspace*{0.1cm}

\noindent\textbf{Notation:} Since in this section we will make \emph{crucial
use} of the interpolation inequality contained in Theorem \ref{Thm interpolaz}%
, we will adopt the following notation: given any $T > 0$, any $\overline{\xi
}\in S_{T}$ and any $r > 0$, we set
\[
B_{r}^{T}(\overline{\xi}) = B_{r}(\overline{\xi})\cap S_{T}.
\]

\begin{theorem}
\label{Thm local Dini x} Let $\mathcal{L}$ be as in \eqref{L}, satisfying
assumptions \emph{(H1), (H2), (H3) }stated in Section 1. Then, there exist
constants $c,r_{0}>0$ depending on $T$, the matrix $B$ in \eqref{B}, the
number $\nu$ in \eqref{nu} and $\Vert a\Vert_{\mathcal{D}(S_{T})}$ in
\eqref{a cost Dini}, respectively, such that
\begin{align}
&  \Vert\partial_{x_{i}x_{j}}^{2}u\Vert_{L^{\infty}(B_{r}^{T}(\overline{\xi
}))}\leq c\,\mathcal{U}_{\mathcal{L}u,S_{T}}^{\mu}(1)\label{basic sup}\\
&  \omega_{\partial_{x_{i}x_{j}}^{2}u,B_{r}^{T}(\overline{\xi})}(\rho)\leq
c\big(\mathcal{M}_{\mathcal{L}u,S_{T}}(c\rho)+\mathcal{M}_{a,S_{T}}%
(c\rho)\cdot\mathcal{U}_{\mathcal{L}u,S_{T}}^{\mu}(1)\big)\quad\forall
\,\,\rho>0, \label{basic Dini}%
\end{align}
and these estimates hold for every $\overline{\xi}\in S_{T}$, $0<r\leq r_{0}$,
$1\leq i,j\leq m_{0}$ and $u\in\mathcal{S}^{D}(S_{T})$ with $\mathrm{supp}%
(u)\subseteq B_{r}(\overline{\xi})\cap\overline{S_{T}}$. Here,
\[
\text{$\textstyle\mathcal{M}_{a,S_{T}}=\sum_{i,j=1}^{m_{0}}\mathcal{M}%
_{a_{ij},S_{T}}$ and $\mathcal{M}_{\cdot,\,S_{T}}$ is as in
\eqref{eq:defcontinM}}.
\]
We stress that the constant $c$ in \eqref{basic sup}-\eqref{basic Dini} is
independent of the ball $B_{r}(\overline{\xi})$. In particular, in view of
Proposition \ref{prop:funMN}, estimate \eqref{basic Dini} expresses
Dini-continuity of $\partial_{x_{i}x_{j}}^{2}u$ provided that both
$\mathcal{L}u$ and $a_{ij}$ are \emph{log-Dini continuous}, while it just
expresses uniform continuity if $\mathcal{L}u$ and $a_{ij}$ are only Dini continuous.
\end{theorem}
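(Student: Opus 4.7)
The plan is to execute the classical Korn ``freezing'' argument: view $\mathcal L$ as a small perturbation of the operator whose coefficients $a_{ij}(\cdot,t)$ are frozen (only in $x$) at a fixed base point, and then invoke the $t$-dependent-coefficient estimates of Section \ref{sec:operatort}.

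Exploiting left-invariance of the vector fields $X_1,\dots,X_q,Y$ on $\mathbb G$ (Lanconelli--Polidoro property (a)), I would first reduce to $\overline\xi=0$ by composing $u$ with the left-translation by $\overline\xi^{-1}$; this leaves both the form of $\mathcal L$ and the modulus $\omega_{a_{ij},S_T}$ invariant. Define the frozen operator
\[
\mathcal L_{0}u=\sum_{i,j=1}^{q}a_{ij}(0,t)\,\partial^{2}_{x_{i}x_{j}}u+Yu,
\]
so that $\mathcal L_{0}u=\mathcal Lu+g$ with $g(x,t)=\sum_{i,j}[a_{ij}(0,t)-a_{ij}(x,t)]\,\partial^{2}_{x_{i}x_{j}}u(x,t)$. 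Both $u$ and $g$ vanish outside $\overline{B_{r}(0)}$, hence in particular for $t\le -r^{2}$, so $u\in\mathcal S^{0}(-r^{2},T)$ and Theorem \ref{Thm Dini coeff t} applies to $\mathcal L_{0}$. Exploiting this compact support in time, the generic $\sqrt{T-\tau}$ argument in the proof of Theorem \ref{Thm Dini sing int} becomes $\sqrt{2}\,r\le 1$ for $r\le r_{0}\le 1/\sqrt{2}$, yielding
\[
\|\partial^{2}_{x_{i}x_{j}}u\|_{L^{\infty}(S_{T})}\le c\,\mathcal U^{\mu}_{\mathcal L_{0}u,S_{T}}(1),\qquad \omega_{\partial^{2}_{x_{i}x_{j}}u,S_{T}}(\rho)\le c\,\mathcal M_{\mathcal L_{0}u,S_{T}}(c\rho).
\]

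Next I would estimate the perturbation $g$. Since $g$ vanishes outside $\overline{B_{r}(0)}$, Lemma \ref{lem:omegasuppcpt} gives $\omega_{g,S_{T}}=\omega_{g,\overline{B_{r}(0)}}$; and on $\overline{B_{r}(0)}$ one has $\|x\|\le r$, so $|a_{ij}(0,t)-a_{ij}(x,t)|\le\omega_{a_{ij},S_T}(r)$. A Leibniz-type splitting of $g=(a(0,t)-a(x,t))\,\partial^{2}u$ then yields
\[
\omega_{g,S_{T}}(s)\le \omega_{a,S_{T}}(r)\,\omega_{\partial^{2}_{x_{i}x_{j}}u,S_{T}}(s)+\omega_{a,S_{T}}(s)\,\|\partial^{2}_{x_{i}x_{j}}u\|_{L^{\infty}(S_{T})}.
\]
By linearity of $\mathcal U^{\mu}$ and $\mathcal M$ in the modulus $\omega$, analogous bounds hold for $\mathcal U^{\mu}_{g}(1)$ and $\mathcal M_{g}(c\rho)$. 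Decomposing $\mathcal L_{0}u=\mathcal Lu+g$ in the two displayed estimates above produces a self-referential system in the unknowns $\|\partial^{2}u\|_{L^{\infty}}$ and $\omega_{\partial^{2}u}(\rho)$, whose ``coupling'' coefficients are $\omega_{a,S_T}(r)$, $\mathcal U^{\mu}_{a,S_T}(\sqrt{2}\,r)$ and $\mathcal M_{a,S_T}(c\rho)$.

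The main obstacle is closing this system: pointwise absorption fails because the quantities $\mathcal U^{\mu}_{\partial^{2}u,S_T}(1)$ and $\mathcal M_{\partial^{2}u,S_T}(c\rho)$ appearing on the right are strictly larger than $\|\partial^{2}u\|_{L^{\infty}}$ and $\omega_{\partial^{2}u,S_T}(\rho)$ respectively. To overcome this I would exploit the a~priori finiteness of $\|\partial^{2}_{x_{i}x_{j}}u\|_{\mathcal D(S_{T})}$ (guaranteed by $u\in\mathcal S^{D}(S_{T})$) together with Lemma \ref{lem:Ufwelldef}(i) and Proposition \ref{prop:funMN}, which control $\mathcal U^{\mu}_{\partial^{2}u,S_T}(1)$ and $\mathcal M_{\partial^{2}u,S_T}(c\rho)$ by the Dini norm of $\partial^{2}u$. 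Choosing $r_{0}$ so small that $c\,\omega_{a,S_T}(r_{0})$ and $c\,\mathcal U^{\mu}_{a,S_T}(\sqrt{2}\,r_{0})$ fall below fixed thresholds---possible because $\mathcal U^{\mu}_{a,S_T}(\rho)\to 0$ as $\rho\to 0^{+}$ by Lemma \ref{lem:Ufwelldef}(ii)---a simultaneous absorption (coupled, if necessary, with a bootstrap that first establishes \eqref{basic sup} and then substitutes it into the Dini inequality to obtain \eqref{basic Dini}) isolates the desired quantities on the left, yielding both estimates with $r_{0}$ depending only on $T$, $B$, $\nu$, and $\|a\|_{\mathcal D(S_{T})}$.
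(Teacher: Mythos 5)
Your high-level strategy is the same as the paper's: freeze the coefficients at $\overline{x}$, write $\mathcal L_{\overline{x}}u=\mathcal Lu+g$ (your $\mathcal L_{0}$ and $g$), apply the $t$-dependent results of Section \ref{sec:operatort} to the frozen operator with compactly supported $u$, and close by absorption, choosing $r_{0}$ small via Lemma \ref{lem:Ufwelldef}. The genuine gap is in the Leibniz estimate for the perturbation. Your bound $\omega_{g,S_{T}}(s)\le\omega_{a,S_{T}}(r)\,\omega_{\partial^{2}u,S_{T}}(s)+\omega_{a,S_{T}}(s)\,\|\partial^{2}u\|_{L^{\infty}}$ leaves the modulus $\omega_{\partial^{2}u}$ in the first term, so after integrating you face $\omega_{a,S_{T}}(r)\,\mathcal U^{\mu}_{\partial^{2}u,S_{T}}(\cdot)$ and $\omega_{a,S_{T}}(r)\,\mathcal M_{\partial^{2}u,S_{T}}(\cdot)$, which --- as you correctly observe --- are not dominated by $\|\partial^{2}u\|_{L^{\infty}}$ or by $\omega_{\partial^{2}u}(\rho)$, so no pointwise absorption is possible. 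The fix you propose, namely appealing to the \emph{a priori} finiteness of $\|\partial^{2}u\|_{\mathcal D(S_{T})}$, is only qualitative and does not repair the quantitative inequality: after absorbing the terms that do shrink, you would still be left with something like $\|\partial^{2}u\|_{L^{\infty}}\le c\,\mathcal U^{\mu}_{\mathcal Lu,S_{T}}(1)+c\,\omega_{a,S_{T}}(r_{0})\,|\partial^{2}u|_{\mathcal D(S_{T})}$, and the last term (a fixed positive number once $r_{0}$ is fixed) is not controlled by the data on the right of \eqref{basic sup}. It cannot be absorbed either, because the conclusion of the theorem bounds only the sup-norm of $\partial^{2}u$, not its Dini seminorm; and one cannot bootstrap by estimating the Dini seminorm of $\partial^{2}u$, since under (H3) alone $\mathcal M_{a,S_{T}}$ need not be a Dini modulus (Remark \ref{rem:NnotDini}).

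The paper's resolution is to reorganize the Leibniz splitting so that the \emph{modulus of continuity always falls on the coefficients $a$, never on $\partial^{2}u$}: combining the compact support of $f_{hk}$ with the trivial bound $\omega_{\partial^{2}u,S_{T}}(\rho)\le 2\,\|\partial^{2}u\|_{L^{\infty}(B_{r}^{T}(\overline{\xi}))}$ applied \emph{inside} the splitting, they reach $\omega_{f_{hk},S_{T}}(\rho)\le 3\,\omega_{a,S_{T}}(\rho)\,\|\partial^{2}u\|_{L^{\infty}(B_{r}^{T}(\overline{\xi}))}$ (this is \eqref{eq:estimomegafhk}). As a result the perturbation contributes $c\,\|\partial^{2}u\|_{L^{\infty}}\,\mathcal U^{\mu}_{a,S_{T}}(\sqrt{2r_{0}})$ to the sup-estimate and $c\,\|\partial^{2}u\|_{L^{\infty}}\,\mathcal M_{a,S_{T}}(c\rho)$ to the modulus estimate; the first of these is absorbable in $\|\partial^{2}u\|_{L^{\infty}}$ alone because $\mathcal U^{\mu}_{a,S_{T}}(\sqrt{2r_{0}})\to0$ as $r_{0}\to0^{+}$, giving \eqref{basic sup}, and substituting the resulting sup bound into the modulus inequality then yields \eqref{basic Dini}. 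Your proposal is missing precisely this reshuffling of the Leibniz splitting, and without it the scheme does not close.
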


\begin{proof}
We follow and revise the proof of \cite[Thm.\,4.1]{BB}. To begin with, we
ar\-bi\-tra\-rily fix $0<r_{0}\leq1/2$ (to be suitably chosen later on) and a
point $\overline{\xi}=(\overline{x},\overline{t})\in S_{T}$. We then consider
the o\-pe\-ra\-tor $\mathcal{L}_{\overline{x}}$ with coefficients
$a_{ij}(\overline{x},t)$ (frozen in space, variable in time), and we let
$\Gamma^{\overline{x}}$ be its fundamental solution. We now observe that,
given any $u\in\mathcal{S}^{D}(S_{T})$ with $\mathrm{supp}(u)\subseteq
B_{r}(\overline{\xi})\cap\overline{S_{T}}$ (for some $0<r\leq r_{0}$), we
clearly have $u\in\mathcal{S}^{0}(\overline{t}-r,T)$ and $\mathcal{L}%
_{\overline{x}}u\in\mathcal{D}(S_{T})$; thus, we can exploit the
representation formula in Corollary \ref{Corollary repr formula uxx}, giving
\begin{align*}
&  \partial_{x_{i}x_{j}}^{2}u(x,t)\\
&  \quad=\int_{\overline{t}-r}^{t}\left(  \int_{\mathbb{R}^{N}}\partial
_{x_{i}x_{j}}^{2}\Gamma^{\overline{x}}(x,t;y,s)\left[  \mathcal{L}%
_{\overline{x}}u(E(s-t)x,s)-\mathcal{L}_{\overline{x}}u(y,s)\right]
dy\right)  ds,
\end{align*}
for every $(x,t)\in B_{r}^{T}(\overline{\xi})$. From this, since we can write
\begin{align*}
\mathcal{L}_{\overline{x}}u  &  =\mathcal{L}u+(\mathcal{L}_{\overline{x}%
}-\mathcal{L})u\\
&  =\mathcal{L}u+\sum_{h,k=1}^{m_{0}}\big(a_{hk}(\overline{x},t)-a_{hk}%
(x,t)\big)\partial_{x_{h}x_{k}}^{2}u,
\end{align*}
we obtain the following identity
\begin{equation}%
\begin{split}
\partial_{x_{i}x_{j}}^{2}u(x,t)  &  =\int_{\overline{t}-r}^{t}\left(
\int_{\mathbb{R}^{N}}\partial_{x_{i}x_{j}}^{2}\Gamma^{\overline{x}%
}(x,t;y,s)\big\{\mathcal{L}u(E(s-t)x,s)-\mathcal{L}u(y,s)\big\}dy\right)  ds\\
&  \qquad+\sum_{h,k=1}^{m_{0}}\int_{\overline{t}-1}^{t}\int_{\mathbb{R}^{N}%
}\partial_{x_{i}x_{j}}^{2}\Gamma^{\overline{x}}(x,t;y,s)\times\\
&  \qquad\qquad\times\Big\{\big(a_{hk}(\overline{x},s)-a_{hk}%
(E(s-t)x,s)\big)\partial_{x_{h}x_{k}}^{2}u(E(s-t)x,s)\\
&  \qquad\qquad\qquad-\big(a_{hk}(\overline{x},s)-a_{hk}(y,s)\big)\partial
_{x_{h}x_{k}}^{2}u(y,s)\Big\}dyds\\
&  =T_{ij}(\mathcal{L}u)(x,t)+\sum_{h,k=1}^{m_{0}}T_{ij}(f_{hk})(x,t),
\end{split}
\label{eq:reprformulaTij}%
\end{equation}
where $T_{ij}(\cdot)$ is as in Theorem \ref{Thm Dini sing int}, and
\[
f_{hk}(y,s)=\big(a_{hk}(\overline{x},s)-a_{hk}(y,s)\big)\partial_{x_{h}x_{k}%
}^{2}u(y,s)\in\mathcal{D}(\overline{t}-r,T).
\]
To proceed further, we turn to estimate the $L^{\infty}$-norm and the
continuity mo\-du\-lus of $T_{ij}(\mathcal{L}u)$ and of $T_{ij}(f_{hk})$ (for
$1\leq h,k\leq m_{0}$) on $B_{r}^{T}(\overline{\xi})$.

\noindent(1)\thinspace\thinspace\textsc{Estimate of the $L^{\infty}$-norm}.
First of all we observe that, since we as\-su\-ming $0<r\leq r_{0}\leq1/2$, by
Proposition \ref{prop:Analoga313} we get the following estimate
\begin{equation}%
\begin{split}
&  \Vert T_{ij}(\mathcal{L}u)\Vert_{L^{\infty}(B_{r}^{T}(\overline{\xi}))}\\
&  \qquad\leq\sup_{(x,t)\in B_{r}(\overline{\xi})}\int_{\RN\times(\overline
{t}-r,t)}\partial_{x_{i}x_{j}}^{2}\Gamma^{\overline{x}}(x,t;y,s)\omega
_{\mathcal{L}u,S_{T}}(\Vert E(s-t)x-y\Vert)\,dy\,ds\\
&  \qquad\leq c\,\sup_{(x,t)\in B_{r}(\overline{\xi})}\mathcal{U}%
_{\mathcal{L}u,S_{T}}^{\mu}(\sqrt{t-\overline{t}+r})=c\,\mathcal{U}%
_{\mathcal{L}u,S_{T}}^{\mu}(\sqrt{2r})\leq c\,\mathcal{U}_{\mathcal{L}u,S_{T}%
}^{\mu}(1),
\end{split}
\label{eq:estimTijLu}%
\end{equation}
where we have used the fact that $\mathcal{U}_{\mathcal{L}u,S_{T}}^{\mu}$ is
non-decreasing, see \eqref{eq:defUmuIntro}, and $c,\mu>0$ are structural
constants. Analogously, since $0<r\leq r_{0}$, we have%
\begin{equation}%
\begin{split}
\Vert T_{ij}(f_{hk})\Vert_{L^{\infty}(B_{r}^{T}(\overline{\xi}))}  &  \leq
c\,\mathcal{U}_{f_{hk},S_{T}}^{\mu}(\sqrt{2r})\leq c\,\mathcal{U}%
_{f_{hk},S_{T}}^{\mu}(\sqrt{2r_{0}})\\
&  =c\int_{\RN}e^{-\mu|z|^{2}}\Big(\int_{0}^{\sqrt{2r_{0}}\Vert z\Vert}%
\frac{\omega_{f_{hk},S_{T}}(s)}{s}\,ds\Big)dz
\end{split}
\label{eq:estimTijsupIntermedia}%
\end{equation}
Now, by exploiting the \emph{product structure} of $f_{hk}$, together with
Lemma \ref{lem:omegasuppcpt} (note that $f_{hk}=\partial_{x_{h}x_{k}}^{2}u=0$
on $S_{T}\setminus B_{r}(\overline{\xi})$) and \eqref{eq:boundwf}, we can
write
\begin{equation}%
\begin{split}
&  \omega_{f_{hk},S_{T}}(\rho)=\omega_{f_{hk},\overline{B}_{r}(\overline{\xi
})\cap S_{T}}(\rho)\\
&  \qquad\leq\sup_{(y,s)\in\overline{B}_{r}(\overline{\xi})\cap S_{T}}%
|a_{hk}(y,s)-a_{hk}(\overline{x},s)|\cdot\omega_{\partial_{x_{h}x_{k}}%
^{2}u,S_{T}}(\rho)\\
&  \qquad\qquad\quad+\omega_{a_{hk},S_{T}}(\rho)\cdot\Vert\partial_{x_{h}%
x_{k}}^{2}u\Vert_{L^{\infty}(B_{r}^{T}(\overline{\xi}))}\\
&  \qquad\leq2\,\omega_{a_{hk},S_{T}}(\rho)\cdot\Vert\partial_{x_{h}x_{k}}%
^{2}u\Vert_{L^{\infty}(B_{r}^{T}(\overline{\xi}))}+\omega_{a_{hk},S_{T}}%
(\rho)\cdot\Vert\partial_{x_{h}x_{k}}^{2}u\Vert_{L^{\infty}(B_{r}%
^{T}(\overline{\xi}))}\\
&  \qquad\leq3\,\omega_{a,S_{T}}(\rho)\cdot\Vert\partial_{x_{h}x_{k}}%
^{2}u\Vert_{L^{\infty}(B_{r}^{T}(\overline{\xi}))}\qquad(\text{for all
$\rho>0$}),
\end{split}
\label{eq:estimomegafhk}%
\end{equation}
where $\omega_{a,S_{T}}(\cdot)=\sum_{h,k=1}^{m_{0}}\omega_{a_{hk},S_{T}}%
(\cdot)$. Thus, by combining
\eqref{eq:estimTijsupIntermedia}-\eqref{eq:estimomegafhk} we get
\begin{equation}%
\begin{split}
&  \Vert T_{ij}(f_{hk})\Vert_{L^{\infty}(B_{r}^{T}(\overline{\xi}))}\\
&  \qquad\leq c\,\Vert\partial_{x_{h}x_{k}}^{2}u\Vert_{L^{\infty}(B_{r}%
^{T}(\overline{\xi}))}\int_{\RN}e^{-\mu|z|^{2}}\Big(\int_{0}^{\sqrt{2r_{0}%
}\Vert z\Vert}\frac{\omega_{a,S_{T}}(s)}{s}\,ds\Big)dz\\
&  \qquad\equiv c\,\Vert\partial_{x_{h}x_{k}}^{2}u\Vert_{L^{\infty}(B_{r}%
^{T}(\overline{\xi}))}\,\mathcal{U}_{a,S_{T}}^{\mu}(\sqrt{2r_{0}%
})\phantom{\int_0^1}\qquad(\text{for all $1\leq i,j\leq m_{0}$}).
\end{split}
\label{eq:estimTijfhksup}%
\end{equation}
Gathering \eqref{eq:reprformulaTij}, \eqref{eq:estimTijLu} and
\eqref{eq:estimTijfhksup}, we finally obtain
\begin{align*}
&  \max_{1\leq i,j\leq m_{0}}\Vert\partial_{x_{i}x_{j}}^{2}u\Vert_{L^{\infty
}(B_{r}^{T}(\overline{\xi}))}\\
&  \qquad\leq c\Big(\mathcal{U}_{\mathcal{L}u,S_{T}}^{\mu}(1)+\mathcal{U}%
_{a,S_{T}}^{\mu}(\sqrt{2r_{0}})\sum_{h,k=1}^{m_{0}}\Vert\partial_{x_{h}x_{k}%
}^{2}u\Vert_{L^{\infty}(B_{r}^{T}(\overline{\xi}))}\Big)\\
&  \qquad\leq c\Big(\mathcal{U}_{\mathcal{L}u,S_{T}}^{\mu}(1)+\max_{1\leq
i,j\leq m_{0}}\Vert\partial_{x_{i}x_{j}}^{2}u\Vert_{L^{\infty}(B_{r}%
^{T}(\overline{\xi}))}\cdot\mathcal{U}_{a,S_{T}}^{\mu}(\sqrt{2r_{0}})\Big),
\end{align*}
where $c>0$ is a constant, possibly different from line to line. From this, if
we choose $0<r_{0}\leq1/2$ so small that
\begin{equation}
c\,\mathcal{U}_{a,S_{T}}^{\mu}(\sqrt{2r_{0}})\leq\frac{1}{2}
\label{eq:choicerzero}%
\end{equation}
(recall that $\mathcal{U}_{a,S_{T}}^{\mu}(r)$ vanishes as $r\rightarrow0^{+}$,
see Lemma \ref{lem:Ufwelldef}), we immediately derive the desired
\eqref{basic sup}. We explicitly point out that the choice of $r_{0}$ (in such
a way that \eqref{eq:choicerzero} is satisfied) \emph{only depends on the
coefficients $a_{hk}$}. \medskip

\noindent(2)\thinspace\thinspace\textsc{Estimate of the continuity modulus}.
First of all we observe that, by combining the representation formula
\eqref{eq:reprformulaTij} with Theorem \ref{Thm Dini sing int}, we get
\begin{equation}%
\begin{split}
\omega_{\partial_{x_{i}x_{j}}^{2}u,B_{r}^{T}(\overline{\xi})}(\rho)  &
\leq\omega_{T_{ij}(\mathcal{L}u),S_{T}}(\rho)+\sum_{h,k=1}^{m_{0}}%
\omega_{T_{ij}(f_{hk}),S_{T}}(\rho)\\
&  \leq c\Big(\mathcal{M}_{\mathcal{L}u,S_{T}}(c\rho)+\sum_{h,k=1}^{m_{0}%
}\mathcal{M}_{f_{hk},S_{T}}(c\rho)\Big),
\end{split}
\label{eq:estimomegade2StepI}%
\end{equation}
where $c>0$ is a structural constant. On the other hand, using
\eqref{eq:estimomegafhk} (and taking into account the very definition of
$\mathcal{M}_{f_{hk},S_{T}}$, see \eqref{eq:defcontinM}), we can write
\begin{equation}%
\begin{split}
\mathcal{M}_{f_{hk},S_{T}}(\rho)  &  =\omega_{f_{hk},S_{T}}(\rho)+\int%
_{0}^{\rho}\frac{\omega_{f_{hk},S_{T}}(s)}{s}\,ds+\rho\int_{\rho}^{\infty
}\frac{\omega_{f_{hk},S_{T}}(s)}{s^{2}}\,ds\\
&  \leq3\,\Vert\partial_{x_{h}x_{k}}^{2}u\Vert_{L^{\infty}(B_{r}^{T}%
(\overline{\xi}))}\mathcal{M}_{a,S_{T}}(\rho)\qquad(\text{for all $\rho>0$}).
\end{split}
\label{eq:estimMfhk}%
\end{equation}
By combining \eqref{eq:estimomegade2StepI}-\eqref{eq:estimMfhk} with
\eqref{basic sup} (which has been already proved), we then obtain the
following estimate, provided that $r_{0}$ is small enough:
\begin{align*}
&  \omega_{\partial_{x_{i}x_{j}}^{2}u,B_{r}^{T}(\overline{\xi})}(\rho)\leq
c\Big(\mathcal{M}_{\mathcal{L}u,S_{T}}(c\rho)+\mathcal{M}_{a,S_{T}}%
(c\rho)\cdot\sum_{h,k=1}^{m_{0}}\Vert\partial_{x_{h}x_{k}}^{2}u\Vert
_{L^{\infty}(B_{r}^{T}(\overline{\xi}))}\Big)\\
&  \qquad\leq c\big(\mathcal{M}_{\mathcal{L}u,S_{T}}(c\rho)+\mathcal{M}%
_{a,S_{T}}(c\rho)\cdot\mathcal{U}_{\mathcal{L}u,S_{T}}^{\mu}(1)\big)\qquad
(\text{for all $\rho>0$}),
\end{align*}
This is precisely the desired \eqref{basic Dini}, and the proof is complete.
\end{proof}

\subsection{The continuity estimate in the general case}

Given an arbitrary open set $\Omega\subseteq\mathbb{R}^{N+1}$ and a function
$f:\Omega\rightarrow\mathbb{R}$, we recall that the (partial) continuity
modulus $\omega_{f,\Omega}$ of $f$ is defined as follows:
\[
\omega_{f,\Omega}(r)=\sup_{\begin{subarray}{c}
(x,t),(y,t)\in\Omega \\
\|x-y\| \leq r
\end{subarray}}|f(x,t)-f(y,t)|\qquad(r>0).
\]
In the following, we will get a control on $\omega_{f,S_{T}}$ starting with a
uniform control on the moduli $\omega_{f,B_{r}(\overline{\xi}_{i})}$ where
$\{B_{r}(\overline{\xi}_{i})\}_{i=1}^{\infty}$ is a covering of $S_{T}$.
\medskip

This is possible in view of the following:

\begin{proposition}
\label{Prop covering} Let $r>0$ be fixed, and let $\{B_{r}(\overline{\xi}%
_{i})\}_{i=1}^{\infty}$ be a covering of $S_{T}$, that is, $S_{T}%
\subseteq\bigcup_{i}B_{r}(\overline{\xi_{i}})$. Then, we have
\[
\omega_{f,S_{T}}(\rho)\leq\sup_{i}\omega_{f,B_{\theta r}^{T}(\overline{\xi
}_{i})}(\rho)\quad\text{ for every $0<\rho\leq r$}.
\]
where $\theta\geq1$ is a structural constant.
\end{proposition}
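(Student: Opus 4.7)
The plan is to unfold the definition of $\omega_{f,S_T}(\rho)$ and show that, for each admissible pair $(x,t),(y,t)\in S_T$ with $\|x-y\|\leq\rho\leq r$, both points lie simultaneously in a single enlarged ball $B_{\theta r}^T(\overline{\xi}_i)$ of the covering. Once this is established, the estimate $|f(x,t)-f(y,t)|\leq \omega_{f,B_{\theta r}^T(\overline{\xi}_i)}(\rho)\leq \sup_i \omega_{f,B_{\theta r}^T(\overline{\xi}_i)}(\rho)$ is immediate from the very definition of the partial continuity modulus (using $d((x,t),(y,t))=\|x-y\|\leq\rho$, cf.\ \eqref{d stesso t}), and a sup over all admissible pairs yields the claim.

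The only real work is the geometric fact: I first fix $(x,t)\in S_T$ and use the covering hypothesis to choose an index $i$ with $(x,t)\in B_r(\overline{\xi}_i)$, i.e.\ $d((x,t),\overline{\xi}_i)<r$. Then, for any $(y,t)\in S_T$ with $\|x-y\|\leq\rho\leq r$, I apply the quasi-triangle inequality \eqref{eq:quasitriangled} to get
\[
d((y,t),\overline{\xi}_i)\leq \bd{\kappa}\bigl(d((y,t),(x,t))+d((x,t),\overline{\xi}_i)\bigr)\leq \bd{\kappa}(\rho+r)\leq 2\bd{\kappa}r.
\]
Setting $\theta:=2\bd{\kappa}\geq 1$ (an absolute constant, since so is $\bd{\kappa}$), this gives $(y,t)\in B_{\theta r}(\overline{\xi}_i)$, and since $(y,t)\in S_T$ by assumption, we conclude that both $(x,t),(y,t)\in B_{\theta r}^T(\overline{\xi}_i)$.

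I do not expect any real obstacle: the argument is essentially a one-shot application of the quasi-triangle inequality, and the asymmetry of $d$ is not an issue here because the two points we need to compare with $\overline{\xi}_i$ both lie on the same $t$-slice relative to $(x,t)$, yet we only invoke $d$ through the triangle inequality, which holds generally. The only mild care is to use the constant $\bd{\kappa}$ of \eqref{eq:quasitriangled} and not confuse the ordering of the arguments of $d$; writing out the chain with $(y,t)$ as first argument and $(x,t)$ as pivot avoids any use of \eqref{eq:quasisymd}.
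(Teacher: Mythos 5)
Your approach is essentially the same as the paper's, and the overall structure (fix $(x,t)$, pick a covering ball containing it, show that any admissible $(y,t)$ lies in the $\theta r$-dilated ball, then pass to the supremum) is correct. There is, however, one slip worth flagging: you assert that taking $(y,t)$ as first argument and $(x,t)$ as pivot ``avoids any use of \eqref{eq:quasisymd}'', and you write
\[
d((y,t),\overline{\xi}_i)\leq \bd{\kappa}\bigl(d((y,t),(x,t))+d((x,t),\overline{\xi}_i)\bigr),
\]
but this is not what \eqref{eq:quasitriangled} says. The quasi-triangle inequality in the paper has the pivot in the \emph{second} slot of both terms, namely $d(\xi,\eta)\leq \bd{\kappa}\bigl(d(\xi,\zeta)+d(\eta,\zeta)\bigr)$; with $\xi=(y,t)$, $\eta=\overline{\xi}_i$, $\zeta=(x,t)$ it yields $d((y,t),\overline{\xi}_i)\leq \bd{\kappa}\bigl(d((y,t),(x,t))+d(\overline{\xi}_i,(x,t))\bigr)$. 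Since the covering hypothesis gives you $d((x,t),\overline{\xi}_i)<r$ (by the definition $B_r(\xi)=\{\eta:d(\eta,\xi)<r\}$), you still need the quasi-symmetry \eqref{eq:quasisymd} to bound $d(\overline{\xi}_i,(x,t))\leq \bd{\kappa}\,d((x,t),\overline{\xi}_i)<\bd{\kappa}r$, and the correct constant comes out as $\theta=\bd{\kappa}(1+\bd{\kappa})$ (as the paper records), not $2\bd{\kappa}$. This does not damage the result, since the proposition only asserts that \emph{some} absolute $\theta\geq1$ works, but the claim that quasi-symmetry is avoidable is false and the $\theta$ you exhibit is incorrect.
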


\begin{proof}
Let $(x_{1},t),\,(x_{2},t)\in S_{T}$ be two points satisfying $\Vert
x_{1}-x_{2}\Vert=s\leq r$, and let $i_{1}\in\mathbb{N}$ be such that
$(x_{1},t)\in B_{r}^{T}(\overline{\xi}_{i_{1}})$. Using the quasi-triangle
inequality of $d$, see \eqref{eq:quasitriangled}, we infer that $(x_{2},t)\in
B_{\theta r}^{T}(\overline{\xi}_{i_{1}})$ for some structural constant
$\theta\geq1$ (actually, we have $\theta=\bd\kappa(1+\bd\kappa)$); as a
consequence, we get
\[
|f(x_{1},t)-f(x_{2},t)|\leq\omega_{f,B_{\theta r}^{T}(\overline{\xi}_{i_{1}}%
)}(s)\leq\sup_{i}\omega_{f,B_{\theta r}^{T}(\overline{\xi}_{i})}(s),
\]
and this implies the assertion.
\end{proof}

Thanks to all the results established so far, we can now give the

\bigskip

\begin{proof}
[Proof of Theorem \ref{Thm main space}]To begin with, we fix $r>0$ so small
that the \emph{local conti\-nu\-ity estimates} in Theorem
\ref{Thm local Dini x} hold on balls of radius $2\theta r$ (where $\theta
\geq1$ is as in Proposition \ref{Prop covering}), and we let $\{B_{r}%
(\overline{\xi}_{n})\}_{n\geq1}$ be a covering of $S_{T}$. We then choose a
function $\Phi\in C_{0}^{\infty}(B_{2\theta r}(0))$ satisfying $\text{$\Phi
\equiv1$ in $B_{\theta r}(0)$}$, and we define
\[
\phi_{n}(\xi)=\Phi(\overline{\xi}_{n}^{-1}\circ\xi)\qquad(n\geq1).
\]
Note that, by \eqref{eq:balltraslD}, $\phi_{n}\in C_{0}^{\infty}(B_{2\theta
r}(\overline{\xi}_{n}))$ and $\phi_{n}\equiv1$ in $B_{\theta r}(\overline{\xi
})$; mo\-re\-o\-ver, by the le\-ft\--in\-va\-riance of $\partial_{x_{1}%
},\ldots,\partial_{x_{m_{0}}},Y$ we see that the $C^{\alpha}$-norms of
$\phi_{n},\partial_{x_{k}}\phi_{n},\mathcal{L}(\phi_{n})$ are \emph{bounded
independently of $n$} (for all $\alpha\in(0,1)$). Throughout this proof, the
constants involved may depend on $r$, which however is by now fixed.
\vspace*{0.1cm} We now arbitrarily fix $n\geq1$ and we observe that, since
$u_{n}=u\phi_{n}\in\mathcal{S}^{D}(S_{T})$ and since $\mathrm{supp}%
(u_{n})\subseteq B_{2\theta r}^{T}(\overline{\xi}_{n})$, we can apply the
estimates \eqref{basic sup}-\eqref{basic Dini} in Theo\-rem
\ref{Thm local Dini x} to this function $u_{n}$: recalling that $\phi
_{n}\equiv1$ in $B_{\theta r}(\overline{\xi}_{n})$, this gives
\begin{align}
&  \Vert\partial_{x_{i}x_{j}}^{2}u\Vert_{L^{\infty}(B_{\theta r}^{T}%
(\overline{\xi}_{n}))}\leq\Vert\partial_{x_{i}x_{j}}^{2}u_{n}\Vert_{L^{\infty
}(B_{2\theta r}^{T}(\overline{\xi}_{n}))}\leq c\,\mathcal{U}_{\mathcal{L}%
u_{n},S_{T}}^{\mu}(1)\label{Dini locale 0}\\
&  \omega_{\partial_{x_{i}x_{j}}^{2}u,B_{\theta r}^{T}(\overline{\xi}_{n}%
)}(\rho)\leq\omega_{\partial_{x_{i}x_{j}}^{2}u_{n},B_{2\theta r}^{T}%
(\overline{\xi}_{n})}(\rho)\nonumber\\
&  \qquad\quad\leq c\big(\mathcal{M}_{\mathcal{L}u_{n},S_{T}}(c\rho
)+\mathcal{M}_{a,S_{T}}(c\rho)\cdot\mathcal{U}_{\mathcal{L}u_{n},S_{T}}^{\mu
}(1)\big)\quad\forall\,\,\rho>0, \label{Dini locale}%
\end{align}
where $c,\mu>0$ are structural constants (and $1\leq i,j\leq m_{0}$). On the
other hand, since a direct computation shows that
\[
\mathcal{L}u_{n}=\textstyle(\mathcal{L}u)\phi_{n}+u(\mathcal{L}\phi_{n}%
)+2\sum_{h,k=1}^{m_{0}}a_{hk}\partial_{x_{h}}u\cdot\partial_{x_{k}}\phi_{n},
\]
we clearly have the following estimate
\begin{equation}%
\begin{split}
\omega_{\mathcal{L}u_{n},S_{T}}(\rho)  &  \leq\omega_{(\mathcal{L}u)\phi
_{n},S_{T}}(\rho)+\omega_{u(\mathcal{L}\phi_{n}),S_{T}}(\rho)\\[0.1cm]
&  \qquad+\textstyle2\sum_{h,k=1}^{m_{0}}\omega_{a_{hk}\partial_{x_{h}}%
u\cdot\partial_{x_{k}}\phi_{n},S_{T}}(\rho).
\end{split}
\label{eq:omegaLLunsum}%
\end{equation}
In view of \eqref{Dini locale 0}-\eqref{Dini locale}, and taking into account
the above \eqref{eq:omegaLLunsum}, to prove the theorem we then turn to
estimate the three \emph{continuity moduli}
\[
\mathrm{(1)}\,\,\omega_{(\mathcal{L}u)\phi_{n},S_{T}},\qquad\mathrm{(2)}%
\,\,\omega_{u(\mathcal{L}\phi_{n}),S_{T}},\qquad\mathrm{(3)}\,\,\omega
_{a_{hk}\partial_{x_{h}}u\cdot\partial_{x_{k}}\phi_{n},S_{T}}.
\]
To this end we will repeatedly use the following straightforward estimate,
holding true for every open set $\Omega\subseteq\mathbb{R}^{N+1}$ and every
$f,g\in\mathcal{D}(\Omega)$:
\begin{equation}
\omega_{fg,\Omega}(\rho)\leq\Vert f\Vert_{L^{\infty}(\Omega)}\omega_{g,\Omega
}(\rho)+\Vert g\Vert_{L^{\infty}(\Omega)}\omega_{f,\Omega}(\rho)\qquad
\forall\,\,\rho>0. \label{eq:estimomegafg}%
\end{equation}

\noindent-\thinspace\thinspace\textsc{Estimate of} (1). On account of
\eqref{eq:estimomegafg}, and since $\phi_{n}\in C_{0}^{\infty}(\mathbb{R}%
^{N+1})$ (hence, in par\-ti\-cu\-lar, $\phi_{n}\in C^{\alpha}(\mathbb{R}%
^{N+1})$ for every $\alpha\in(0,1)$), we immediately get
\begin{equation}%
\begin{split}
\omega_{(\mathcal{L}u)\phi_{n},S_{T}}(\rho)  &  \leq\Vert\mathcal{L}%
u\Vert_{L^{\infty}(S_{T})}\omega_{\phi_{n},S_{T}}(\rho)+\Vert\phi_{n}%
\Vert_{L^{\infty}(S_{T})}\omega_{\mathcal{L}u,S_{T}}(\rho)\\
&  \leq c\big(\rho^{\alpha}\Vert\mathcal{L}u\Vert_{L^{\infty}(S_{T})}%
+\omega_{\mathcal{L}u,S_{T}}(\rho)\big)\qquad\forall\,\,\rho>0,
\end{split}
\label{eq:estimomegaLLuphinFINAL}%
\end{equation}
where $c>0$ is a constant only depending on $\Phi$. \medskip

\noindent-\thinspace\thinspace\textsc{Estimate of} (2). Using once again
\eqref{eq:estimomegafg}, and taking into account that $u\mathcal{L}\phi_{n}$
is compactly supported in $B_{2\theta r}(\overline{\xi}_{n})$, by Lemma
\ref{lem:omegasuppcpt} we can write
\begin{align*}
\omega_{u(\mathcal{L}\phi_{n}),S_{T}}(\rho)  &  =\omega_{u(\mathcal{L}\phi
_{n}),\overline{B}_{2\theta r}(\overline{\xi}_{n})\cap S_{T}}(\rho)\\
&  \leq\Vert u\Vert_{L^{\infty}(B_{2\theta r}^{T}(\overline{\xi}_{n}))}%
\omega_{\mathcal{L}\phi_{n},S_{T}}(\rho)\\
&  \qquad+\Vert\mathcal{L}\phi_{n}\Vert_{L^{\infty}(B_{2\theta r}%
^{T}(\overline{\xi}_{n}))}\omega_{u,\overline{B}_{2\theta r}(\overline{\xi
}_{n})\cap S_{T}}(\rho)\\
&  (\text{since $\mathcal{L}\phi_{n}\in C^{\alpha}(\mathbb{R}^{N+1})$ for
every $0<\alpha<1$})\\
&  \leq c\big(\rho^{\alpha}\Vert u\Vert_{L^{\infty}(B_{2\theta r}%
^{T}(\overline{\xi}_{n}))}+\omega_{u,\overline{B}_{2\theta r}(\overline{\xi
}_{n})\cap S_{T}}(\rho)\big)=(\bigstar),
\end{align*}
where $c>0$ is a constant only depending on $\Phi$. On the other hand, since
we know from Theorem \ref{Thm interpolaz} that $u\in C^{\alpha}(B\cap S_{T})$
\emph{for every ball $B=B_{R}(\overline{\eta})$} (with $\overline{\eta}\in
S_{T}$) and every $\alpha\in(0,1)$, we obtain
\begin{equation}
(\bigstar)\leq c\,\rho^{\alpha}\Vert u\Vert_{C^{\alpha}(B_{2\theta r}%
^{T}(\overline{\xi}_{n}))}\qquad\forall\,\,\rho>0.
\label{eq:estimomegauLLphinFINAL}%
\end{equation}

\noindent-\thinspace\thinspace\textsc{Estimate of} (3). By repeatedly
exploiting \eqref{eq:estimomegafg}, and by taking into account the smoothness
and support of $\phi_{n}$, we derive the following estimate
\begin{align*}
&  \omega_{a_{hk}\partial_{x_{h}}u\cdot\partial_{x_{k}}\phi_{n},S_{T}}%
(\rho)=\omega_{a_{hk}\partial_{x_{h}}u\cdot\partial_{x_{k}}\phi_{n}%
,\overline{B}_{2\theta r}(\overline{\xi}_{n})\cap S_{T}}(\rho)\\
&  \qquad\leq\omega_{a_{hk},S_{T}}(\rho)\,\Vert\partial_{x_{h}}u\Vert
_{L^{\infty}(B_{2\theta r}^{T}(\overline{\xi}_{n}))}\Vert\partial_{x_{k}}%
\phi_{n}\Vert_{L^{\infty}(B_{2\theta r}(\overline{\xi}_{n}))}\\
&  \qquad\qquad+\omega_{\partial_{x_{h}}u,\overline{B}_{2\theta r}%
(\overline{\xi}_{n})\cap S_{T}}(\rho)\,\Vert a_{hk}\Vert_{L^{\infty
}(\mathbb{R}^{N+1})}\Vert\partial_{x_{k}}\phi_{n}\Vert_{L^{\infty}%
(\mathbb{R}^{N+1}))}\\
&  \qquad\qquad\qquad+\omega_{\partial_{x_{k}}\phi_{n},S_{T}}(\rho)\,\Vert
a_{hk}\Vert_{L^{\infty}(\mathbb{R}^{N+1})}\Vert\partial_{x_{h}}u\Vert
_{L^{\infty}({B}_{2\theta r}^{T}(\overline{\xi}_{n}))}\\
&  \qquad\leq c\big(\omega_{a_{hk},S_{T}}(\rho)\,\Vert\partial_{x_{h}}%
u\Vert_{L^{\infty}(B_{2\theta r}^{T}(\overline{\xi}_{n}))}+\omega
_{\partial_{x_{h}}u,\overline{B}_{2\theta r}(\overline{\xi}_{n})\cap S_{T}%
}(\rho)\\
&  \qquad\qquad+\rho^{\alpha}\Vert\partial_{x_{h}}u\Vert_{L^{\infty}%
({B}_{2\theta r}^{T}(\overline{\xi}_{n}))}\big)=:(\bigstar),
\end{align*}
where $c>0$ is a constant only depending on $\Phi$ and on $\nu$ in \eqref{nu}.
On the other hand, since we know from Theorem \ref{Thm interpolaz} that
$\partial_{x_{h}}u\in C^{\alpha}(B\cap S_{T})$ \emph{for every ball
$B=B_{R}(\overline{\eta})$} (with $\overline{\eta}\in S_{T}$) and every
$\alpha\in(0,1)$, we obtain
\begin{equation}%
\begin{split}
(\bigstar)  &  \leq c\big(\omega_{a_{hk},S_{T}}(\rho)\,\Vert\partial_{x_{h}%
}u\Vert_{L^{\infty}(B_{2\theta r}(\overline{\xi}_{n}))}\\
&  \qquad+\rho^{\alpha}\Vert\partial_{x_{h}}u\Vert_{C^{\alpha}(B_{2\theta
r}^{T}(\overline{\xi}_{n}))}\big)\qquad\forall\,\,\rho>0.
\end{split}
\label{eq:estimomegadeudephinFINAL}%
\end{equation}
Gathering
\eqref{eq:estimomegaLLuphinFINAL}-to-\eqref{eq:estimomegadeudephinFINAL}, from
\eqref{eq:omegaLLunsum} we then get
\begin{equation}%
\begin{split}
\omega_{\mathcal{L}u_{n},S_{T}}(\rho)  &  \leq c\Big(\rho^{\alpha}%
\Vert\mathcal{L}u\Vert_{L^{\infty}(S_{T})}+\omega_{\mathcal{L}u,S_{T}}%
(\rho)+\rho^{\alpha}\Vert u\Vert_{C^{\alpha}(B_{2\theta r}^{T}(\overline{\xi
}_{n}))}\\
&  \qquad+\sum_{h,k=1}^{m_{0}}\big(\omega_{a_{hk},S_{T}}(\rho)\,\Vert
\partial_{x_{h}}u\Vert_{L^{\infty}(B_{2\theta r}(\overline{\xi}_{n}))}\\
&  \qquad\qquad+\rho^{\alpha}\Vert\partial_{x_{h}}u\Vert_{C^{\alpha
}(B_{2\theta r}^{T}(\overline{\xi}_{n}))}\big)\Big)\\
&  (\text{setting, as usual, $\textstyle\omega_{a,S_{T}}=\sum_{h,k=1}^{m_{0}%
}\omega_{a_{hk},S_{T}}$})\\
&  \leq c\Big(\rho^{\alpha}\Vert\mathcal{L}u\Vert_{L^{\infty}(S_{T})}%
+\omega_{\mathcal{L}u,S_{T}}(\rho)+\rho^{\alpha}\Vert u\Vert_{C^{\alpha
}(B_{2\theta r}^{T}(\overline{\xi}_{n}))}\\
&  \qquad+\big(\omega_{a,S_{T}}(\rho)+\rho^{\alpha}\big)\sum_{h=1}^{m_{0}%
}\Vert\partial_{x_{h}}u\Vert_{C^{\alpha}(B_{2\theta r}^{T}(\overline{\xi}%
_{n}))}\Big),
\end{split}
\label{eq:estimomegaLLunFINAL}%
\end{equation}
and this estimate holds \emph{for every $\rho>0$}. With
\eqref{eq:estimomegaLLunFINAL} at hand, we are now ready to establish
assertions (i)-(ii) in the statement of the theorem. \medskip\noindent

-\thinspace\thinspace\emph{Proof of} (i). First of all, by combining estimates
\eqref{Dini locale 0}-\eqref{eq:estimomegaLLunFINAL} and by e\-xplo\-i\-ting
Lemma \ref{lem:Ufwelldef}-(i), we derive the bound
\begin{equation}%
\begin{split}
\Vert\partial_{x_{i}x_{j}}^{2}  &  u\Vert_{L^{\infty}(B_{r}^{T}(\overline{\xi
}_{n}))}\leq c\,\mathcal{U}_{\mathcal{L}u_{n},S_{T}}^{\mu}(1)\\
&  =c\int_{\RN}e^{-\mu|z|^{2}}\Big(\int_{0}^{\Vert z\Vert}\frac{\omega
_{\mathcal{L}u_{n},S_{T}}(s)}{s}\,ds\Big)dz\\
&  \leq c\Big(\Vert\mathcal{L}u\Vert_{L^{\infty}(S_{T})}+\mathcal{U}%
_{\mathcal{L}u,S_{T}}^{\mu}(1)\\
&  \qquad+\sum_{h=1}^{m_{0}}\Vert\partial_{x_{h}}u\Vert_{C^{\alpha}(B_{2\theta
r}^{T}(\overline{\xi}_{n}))}+\Vert u\Vert_{C^{\alpha}(B_{2\theta r}%
^{T}(\overline{\xi}_{n}))}\Big)\\
&  \leq c\,\Big(\Vert\mathcal{L}u\Vert_{\mathcal{D}(S_{T})}+\sum_{h=1}^{m_{0}%
}\Vert\partial_{x_{h}}u\Vert_{C^{\alpha}(B_{2\theta r}^{T}(\overline{\xi}%
_{n}))}+\Vert u\Vert_{C^{\alpha}(B_{2\theta r}^{T}(\overline{\xi}_{n}))}\Big)
\end{split}
\label{eq:dausareassertionii}%
\end{equation}
where $c>0$ now depends on the chosen $\alpha$ and on the number $A$ in
\eqref{a cost Dini}. From this, by using the interpolation inequality
\eqref{disug interpolaz} in Th\-e\-o\-rem \ref{Thm interpolaz}, we obtain
\begin{equation}%
\begin{split}
&  \Vert\partial_{x_{i}x_{j}}^{2}u\Vert_{L^{\infty}(B_{r}^{T}(\overline{\xi
}_{n}))}\\
&  \qquad\leq c\Big\{\Vert\mathcal{L}u\Vert_{\mathcal{D}(S_{T})}%
+\varepsilon\Big(\sum_{h,k=1}^{m_{0}}\Vert\partial_{x_{k}x_{h}}^{2}%
u\Vert_{L^{\infty}(S_{T})}+\Vert Yu\Vert_{L^{\infty}(S_{T})}\Big)\\
&  \qquad\qquad+\frac{1}{\varepsilon^{\gamma}}\Vert u\Vert_{L^{\infty}(S_{T}%
)}\Big\},
\end{split}
\label{eq:dovefaresupLinf}%
\end{equation}
and this estimate holds \emph{for every $\e\in(0,1)$}. We then observe that,
since $n\geq1$ is ar\-bi\-tra\-rily fixed, by taking the $\sup$ over
$\mathbb{N}$ in the previous inequality we get
\begin{align*}
&  \Vert\partial_{x_{i}x_{j}}^{2}u\Vert_{L^{\infty}(S_{T})}\leq c\Big\{\Vert
\mathcal{L}u\Vert_{\mathcal{D}(S_{T})}+\varepsilon\Big(\sum_{h,k=1}^{m_{0}%
}\Vert\partial_{x_{k}x_{h}}^{2}u\Vert_{L^{\infty}(S_{T})}+\Vert Yu\Vert
_{L^{\infty}(S_{T})}\Big)\\
&  \qquad\qquad\quad+\frac{1}{\varepsilon^{\gamma}}\Vert u\Vert_{L^{\infty
}(S_{T})}\Big\};
\end{align*}
moreover, since $Yu=\mathcal{L}u-\sum_{h,k=1}^{m_{0}}a_{hk}\partial
_{x_{h}x_{k}}^{2}u$, by exploiting assumption (H3) we can write (up to
possibly change the constant $c$)
\[
\Vert\partial_{x_{i}x_{j}}^{2}u\Vert_{L^{\infty}(S_{T})}\leq c\Big\{\Vert
\mathcal{L}u\Vert_{\mathcal{D}(S_{T})}+\varepsilon\sum_{h,k=1}^{m_{0}}%
\Vert\partial_{x_{k}x_{h}}^{2}u\Vert_{L^{\infty}(S_{T})}+\frac{1}%
{\varepsilon^{\gamma}}\Vert u\Vert_{L^{\infty}(S_{T})}\Big\}.
\]
Thus, if we choose $\e>0$ so small that $c\,\e<1/2$, we conclude that
\begin{equation}
\Vert\partial_{x_{i}x_{j}}^{2}u\Vert_{L^{\infty}(S_{T})}\leq c\big(\Vert
\mathcal{L}u\Vert_{\mathcal{D}(S_{T})}+\Vert u\Vert_{L^{\infty}(S_{T})}\big),
\label{eq:assertioniStepI}%
\end{equation}
and this implies, again by the identity $Yu=\mathcal{L}u-\sum_{h,k=1}^{m_{0}%
}a_{hk}\partial_{x_{h}x_{k}}^{2}u$,
\begin{equation}%
\begin{split}
\Vert Yu\Vert_{L^{\infty}(S_{T})}  &  \leq\Vert\mathcal{L}u\Vert_{L^{\infty
}(S_{T})}+\sum_{h,k=1}^{m_{0}}\Vert a_{hk}\Vert_{L^{\infty}(\mathbb{R}^{N+1}%
)}\Vert\partial_{x_{h}x_{k}}^{2}u\Vert_{L^{\infty}(S_{T})}\\
&  \leq c\big(\Vert\mathcal{L}u\Vert_{\mathcal{D}(S_{T})}+\Vert u\Vert
_{L^{\infty}(S_{T})}\big).
\end{split}
\label{eq:assertioniStepII}%
\end{equation}
In view of \eqref{eq:assertioniStepI}-\eqref{eq:assertioniStepII} and Theorem
\ref{Thm interpolaz}, assertion (i) is now established. \medskip\noindent

-\thinspace\thinspace\emph{Proof of} (ii). First of all, by combining
\eqref{Dini locale} with \eqref{eq:estimomegaLLunFINAL} (and by taking into
account the very definition of $\mathcal{M}_{\cdot,S_{T}}$, see
\eqref{eq:defcontinM}), we get
\begin{align*}
&  \omega_{\partial_{x_{i}x_{j}u}^{2},B_{\theta r}^{T}(\overline{\xi}_{n}%
)}(\rho)\leq c\big(\mathcal{M}_{\mathcal{L}u_{n},S_{T}}(c\rho)+\mathcal{M}%
_{a,S_{T}}(c\rho)\cdot\mathcal{U}_{\mathcal{L}u_{n},S_{T}}^{\mu}(1)\big)\\
&  \qquad\leq c\Big\{\rho^{\alpha}\Vert\mathcal{L}u\Vert_{L^{\infty}(S_{T}%
)}+\mathcal{M}_{\mathcal{L}u,S_{T}}(c\rho)\\
&  \qquad\qquad+\big(\mathcal{M}_{a,S_{T}}(c\rho)+\rho^{\alpha}\big)\Big(\sum
_{h=1}^{m_{0}}\Vert\partial_{x_{h}}u\Vert_{C^{\alpha}(B_{2\theta r}%
^{T}(\overline{\xi}_{n}))}+\Vert u\Vert_{C^{\alpha}(B_{2\theta r}%
^{T}(\overline{\xi}_{n}))}\Big)\\
&  \qquad\qquad\qquad+\mathcal{M}_{a,S_{T}}(c\rho)\cdot\mathcal{U}%
_{\mathcal{L}u_{n},S_{T}}^{\mu}(1)\Big\}\\
&  \qquad(\text{by the same computation in \eqref{eq:dausareassertionii}})\\
&  \qquad\leq c\Big\{\mathcal{M}_{\mathcal{L}u,S_{T}}(c\rho)+\big(\mathcal{M}%
_{a,S_{T}}(c\rho)+\rho^{\alpha}\big)\Vert\mathcal{L}u\Vert_{\mathcal{D}%
(S_{T})}\\
&  \qquad\qquad+\big(\mathcal{M}_{a,S_{T}}(c\rho)+\rho^{\alpha}\big)\Big(\sum
_{h=1}^{m_{0}}\Vert\partial_{x_{h}}u\Vert_{C^{\alpha}(B_{2\theta r}%
^{T}(\overline{\xi}_{n}))}+\Vert u\Vert_{C^{\alpha}(B_{2\theta r}%
^{T}(\overline{\xi}_{n}))}\Big)\Big\},
\end{align*}
where $c>0$ depends on $\alpha\in(0,1)$ and on the number $A$. From this, by
exploiting the interpolation inequality \eqref{disug interpolaz} \emph{with
$\e=1$}, jointly with the estimate in assertion (i) (which has been already
established), we obtain
\[%
\begin{split}
&  \omega_{\partial_{x_{i}x_{j}u}^{2},B_{\theta r}^{T}(\overline{\xi}_{n}%
)}(\rho)\\
&  \qquad\leq c\Big\{\mathcal{M}_{\mathcal{L}u,S_{T}}(c\rho)+\big(\mathcal{M}%
_{a,S_{T}}(c\rho)+\rho^{\alpha}\big)\big(\Vert\mathcal{L}u\Vert_{\mathcal{D}%
(S_{T})}+\Vert u\Vert_{L^{\infty}(S_{T})}\big)\\
&  \qquad\qquad+\big(\mathcal{M}_{a,S_{T}}(c\rho)+\rho^{\alpha}\big)\Big(\sum
_{h,k=1}^{m_{0}}\Vert\partial_{x_{k}x_{h}}^{2}u\Vert_{L^{\infty}(S_{T})}+\Vert
Yu\Vert_{L^{\infty}(S_{T})}\Big)\Big\}\\
&  \qquad\leq c\Big\{\mathcal{M}_{\mathcal{L}u,S_{T}}(c\rho)+\big(\mathcal{M}%
_{a,S_{T}}(c\rho)+\rho^{\alpha}\big)\big(\Vert\mathcal{L}u\Vert_{\mathcal{D}%
(S_{T})}+\Vert u\Vert_{L^{\infty}(S_{T})}\big)\Big\}.
\end{split}
\]
We then observe that, since $n\geq1$ is ar\-bi\-tra\-rily fixed, by taking the
supremum over $\mathbb{N}$ in the above estimate and by using Proposition
\ref{Prop covering}, we obtain
\[%
\begin{split}
&  \omega_{\partial_{x_{i}x_{j}u}^{2},S_{T}}(\rho)\leq\sup_{n\in\mathbb{N}%
}\omega_{\partial_{x_{i}x_{j}u}^{2},B_{\theta r}^{T}(\overline{\xi}_{n})}%
(\rho)\\
&  \qquad\leq c\Big\{\mathcal{M}_{\mathcal{L}u,S_{T}}(c\rho)+\big(\mathcal{M}%
_{a,S_{T}}(c\rho)+\rho^{\alpha}\big)\big(\Vert\mathcal{L}u\Vert_{\mathcal{D}%
(S_{T})}+\Vert u\Vert_{L^{\infty}(S_{T})}\big)\Big\}
\end{split}
\]
and this estimate holds for every $0<\rho\leq r$ (note that $r>0$ is fixed
\emph{once and for all}); this, together with the identity $Yu=\mathcal{L}%
u-\sum_{h,k}a_{hk}\partial_{x_{h}x_{k}}^{2}u$ and \eqref{eq:estimomegafg},
immediately implies an analogous bound for the modulus
\[
\omega_{Yu,S_{T}}(\rho)\quad(\text{for $0<\rho\leq r$}).
\]
Finally, when $\rho\geq r$ estimate (ii) is an immediate consequence of (i).
\end{proof}

\section{Time continuity of $\partial_{x_{i}x_{j}}^{2}u$}

Now we have established Theorem \ref{Thm main space}, we are finally ready to
give the

\bigskip

\begin{proof}
[Proof of Theorem \ref{Thm main time}]Let $K,T,\tau,\alpha$ be as in the
statement of the theorem, and let $\psi(t)\in C_{0}^{\infty}(\mathbb{R})$ be a
cut-off function such that
\[
\text{(i)\thinspace\thinspace$0\leq\psi\leq1$ on $\mathbb{R}$},\quad
\text{(ii)\thinspace\thinspace$\psi\equiv1$ on $[\tau,T]$},\quad
\text{(iii)\thinspace\thinspace$\psi(t)=0$ for $t\leq\tau-1$}.
\]
We then fix a point $\overline{\xi}=(\overline{x},\overline{t})\in S_{T}$ and,
for a given function $u\in\mathcal{S}^{D}(S_{T})$ with $\mathcal{L}%
u\in\mathcal{D}_{\log}(S_{T})$, we apply the continuity estimate
\eqref{eq:schauderspacetimeuxixj} in Theorem \ref{Thm local continuity time}
to the function $v:=u\psi\in\mathcal{S}^{0}(\tau-1,T)$ (see property (iii) of
$\psi$): this gives
\begin{equation}%
\begin{split}
&  |\partial_{x_{i}x_{j}}^{2}u(x_{1},t_{1})-\partial_{x_{i}x_{j}}^{2}%
u(x_{2},t_{2})|=|\partial_{x_{i}x_{j}}^{2}v(x_{1},t_{1})-\partial_{x_{i}x_{j}%
}^{2}v(x_{2},t_{2})|\\
&  \qquad\leq c\big\{\mathcal{M}_{\mathcal{L}_{\overline{x}}v,S_{T}%
}\big(c(d((x_{1},t_{1}),(x_{2},t_{2}))+|t_{1}-t_{2}|^{1/q_{N}})\big)\\
&  \qquad\qquad+\mathcal{U}_{\mathcal{L}_{\overline{x}}v,S_{T}}^{\mu}%
(\sqrt{|t_{2}-t_{1}|})\big\},
\end{split}
\label{eq:estimStartFrozenCT}%
\end{equation}
for every couple of points $(x_{1},t_{1}),(x_{2},t_{2})\in K\times\lbrack
\tau,T]$. Owing to \eqref{eq:estimStartFrozenCT}, and ta\-king into account
the definitions of $\mathcal{M}_{\cdot,\,S_{T}}$ and of $\mathcal{U}%
_{\cdot,\,S_{T}}^{\mu}$, to complete the proof we then turn to estimate the
continuity modulus
\[
\omega_{\mathcal{L}_{\overline{x}}v,S_{T}}(r)\quad(\text{for $r>0$}).
\]
First of all, since $\psi$ is independent of $x$, we get $\mathcal{L}%
_{\overline{x}}v=\psi(\mathcal{L}_{\overline{x}}u)-u\,\partial_{t}\psi;$
moreover,
\begin{align*}
\mathcal{L}_{\overline{x}}u &  =\mathcal{L}u+(\mathcal{L}_{\overline{x}%
}-\mathcal{L})u\\
&  =\mathcal{L}u+\sum_{h,k=1}^{m_{0}}\big(a_{hk}(\overline{x},t)-a_{hk}%
(x,t)\big)\partial_{x_{h}x_{k}}^{2}u.
\end{align*}
In view of these facts, and since $\psi$ is \emph{constant w.r.t.\thinspace
$x$}, by repeatedly exploiting \eqref{eq:estimomegafg}, together with
estimates (i)-(ii) in Theorem \ref{Thm main space}, we then obtain%
\begin{align}
\omega_{\mathcal{L}_{\overline{x}}v,S_{T}}\left(  \rho\right)   &  \leq
\omega_{\psi(\mathcal{L}_{\overline{x}}u),S_{T}}\left(  \rho\right)
+\omega_{u\partial_{t}\psi,S_{T}}\left(  \rho\right)  \leq c\left(
\omega_{\mathcal{L}_{\overline{x}}u,S_{T}}\left(  \rho\right)  +\omega
_{u,S_{T}}\left(  \rho\right)  \right)  \nonumber\\
&  \leq c\left\{  \omega_{\mathcal{L}u,S_{T}}\left(  \rho\right)  +2A%
{\displaystyle\sum\limits_{h,k=1}^{m_{0}}}
\omega_{\partial_{x_{h}x_{k}}^{2}u.S_{T}}\left(  \rho\right)  \right.
\nonumber\\
&  +\left.  \omega_{a,S_{T}}\left(  \rho\right)
{\displaystyle\sum\limits_{h,k=1}^{m_{0}}}
\Vert\partial_{x_{h}x_{k}}^{2}u\Vert_{L^{\infty}(S_{T})}+\omega_{u,S_{T}%
}\left(  \rho\right)  \right\}  \nonumber\\
&  \leq c\left\{  \omega_{\mathcal{L}u,S_{T}}\left(  \rho\right)  \right.
\nonumber\\
&  +\left(  \mathcal{M}_{\mathcal{L}u,S_{T}}\left(  c\rho\right)  +\left(
\mathcal{M}_{a,S_{T}}\left(  c\rho\right)  +\rho^{\alpha}\right)  \left(
\left\Vert \mathcal{L}u\right\Vert _{\mathcal{D}(S_{T})}+\left\Vert
u\right\Vert _{L^{\infty}(S_{T})}\right)  \right)  \nonumber\\
&  +\left.  \omega_{a,S_{T}}\left(  \rho\right)  \left(  \left\Vert
\mathcal{L}u\right\Vert _{\mathcal{D}(S_{T})}+\left\Vert u\right\Vert
_{L^{\infty}(S_{T})}\right)  +\omega_{u,S_{T}}\left(  \rho\right)  \right\}
\nonumber\\
&  \text{(since, by definition, $\omega_{a\,S_{T}}\leq\mathcal{M}_{a,S_{T}}$%
)}\nonumber\\
&  \leq c\left\{  \omega_{\mathcal{L}u,S_{T}}\left(  \rho\right)
+\omega_{u,S_{T}}\left(  \rho\right)  +\mathcal{M}_{\mathcal{L}u,S_{T}}\left(
c\rho\right)  \right.  \nonumber\\
&  \left.  +\left(  \left(  \mathcal{M}_{a,S_{T}}\left(  c\rho\right)
+\rho^{\alpha}\right)  \left(  \left\Vert \mathcal{L}u\right\Vert
_{\mathcal{D}(S_{T})}+\left\Vert u\right\Vert _{L^{\infty}(S_{T})}\right)
\right)  \right\}  \label{eq:mainestimomegadexixjLxbar}%
\end{align}
where, as usual, $\omega_{a,S_{T}}=\sum_{h,k=1}^{m_{0}}\omega_{a_{hk},S_{T}}$.
With estimate \eqref{eq:mainestimomegadexixjLxbar} at hand, we can easily
complete the proof of the theorem: indeed, by combining
\eqref{eq:estimStartFrozenCT}-\eqref{eq:mainestimomegadexixjLxbar} and by
taking into account the definitions of the functions involved, we get
\begin{align}
&  |\partial_{x_{i}x_{j}}^{2}u(x_{1},t_{1})-\partial_{x_{i}x_{j}}^{2}%
u(x_{2},t_{2})|\nonumber\\
&  \quad\leq c\big\{\mathcal{M}_{\mathcal{L}u,S_{T}}(cr)+\mathcal{M}_{u,S_{T}%
}(cr)+\mathcal{N}_{\mathcal{L}u,S_{T}}(cr)\nonumber\\
&  \quad\quad+(\mathcal{N}_{a,S_{T}}(cr)+r^{\alpha})(\Vert\mathcal{L}%
u\Vert_{\mathcal{D}(S_{T})}+\Vert u\Vert_{L^{\infty}(S_{T})}))\nonumber\\
&  \qquad\quad+\mathcal{U}_{\mathcal{L}u,S_{T}}^{\mu}(c\sqrt{|t_{1}-t_{2}%
|})+\mathcal{U}_{u,S_{T}}^{\mu}(c\sqrt{|t_{1}-t_{2}|})+\mathcal{V}%
_{\mathcal{L}u,S_{T}}^{\mu}(c\sqrt{|t_{1}-t_{2}|})\nonumber\\
&  \qquad\qquad+(\mathcal{V}_{a,S_{T}}^{\mu}(c\sqrt{|t_{1}-t_{2}|}%
)+|t_{1}-t_{2}|^{\alpha/2})(\Vert\mathcal{L}u\Vert_{\mathcal{D}(S_{T})}+\Vert
u\Vert_{L^{\infty}(S_{T})})\big\},\label{final ineq}%
\end{align}
where we have set
\[
r:=d\big((x_{1},t_{1}),(x_{2},t_{2})\big)+|t_{1}-t_{2}|^{1/q_{N}}.
\]
Next we note that, on the one hand we have $\omega_{\mathcal{L}u,S_{T}}%
\leq\mathcal{M}_{\mathcal{L}u,S_{T}}\leq\mathcal{N}_{\mathcal{L}u,S_{T}}$,
which implies that $\mathcal{U}_{\mathcal{L}u,S_{T}}^{\mu}\leq\mathcal{V}%
_{\mathcal{L}u,S_{T}}^{\mu}$. On the other hand, since by Theorem
\ref{Thm main space} we know that%
\[
\Vert u\Vert_{C^{\alpha}(S_{T})}\leq c\left\{  \Vert\mathcal{L}u\Vert
_{\mathcal{D}(S_{T})}+\Vert u\Vert_{L^{\infty}(S_{T})}\right\}  ,
\]
by Remark \ref{rem:HolderDini} and Proposition \ref{prop:stimaUf}, we can
write%
\[
\mathcal{M}_{u,S_{T}}(cr)+\mathcal{U}_{u,S_{T}}^{\mu}(c\sqrt{|t_{1}-t_{2}%
|})\leq cr^{\alpha}\left\{  \Vert\mathcal{L}u\Vert_{\mathcal{D}(S_{T})}+\Vert
u\Vert_{L^{\infty}(S_{T})}\right\}  .
\]
Using these facts in (\ref{final ineq}) we obtain the desired \eqref{eq:maincontinuitydexixjtime}.
\end{proof}

\bigskip

\noindent\textbf{Data availability statement.} Data sharing is not applicable
to this article as no datasets were generated or analyzed during the current study.

\noindent\textbf{Conflict of interest statement.} The authors have no
conflicts of interest to declare that are relevant to the content of this article.

\bigskip\newpage

\bigskip

\noindent\textbf{Addresses}

\noindent Stefano\thinspace Biagi and Marco\thinspace Bramanti.

\noindent Dipartimento di Matematica, Politecnico di Milano.

\noindent Via Bonardi 9, 20133 Milano, Italy.

\noindent stefano.biagi@polimi.it; marco.bramanti@polimi.it

\bigskip

\noindent Bianca Stroffolini.

\noindent Dipartimento di Matematica e Applicazioni \textquotedblleft Renato
Caccioppoli\textquotedblright.

\noindent Universit\`{a} degli Studi di Napoli \textquotedblleft Federico
II\textquotedblright

\noindent Via Cintia, 80126 Napoli, Italy.

\noindent bstroffo@unina.it
\end{document}